\def\Xint#1{\mathchoice
{\XXint\displaystyle\textstyle{#1}}%
{\XXint\textstyle\scriptstyle{#1}}%
{\XXint\scriptstyle\scriptscriptstyle{#1}}%
{\XXint\scriptscriptstyle\scriptscriptstyle{#1}}%
\!\int}
\def\XXint#1#2#3{{\setbox0=\hbox{$#1{#2#3}{\int}$ }
\vcenter{\hbox{$#2#3$ }}\kern-.6\wd0}}
\def\dashint{\Xint-}
\newtheorem{thm}{Theorem}[section]
\numberwithin{equation}{section}
\newtheorem{lemma}[thm]{Lemma}
\newtheorem*{defin}{Definition}
\newtheorem*{example}{Example}
\newtheorem{cor}[thm]{Corollary}
\newtheorem{prop}[thm]{Proposition}
\newtheorem*{rem}{Remark}
\date{}
\title[On-diagonal heat kernel bounds]{Volume growth and on-diagonal heat kernel bounds on Riemannian manifolds with an end}
\author[A. Grigor'yan]{Alexander Grigor'yan}
\address[Alexander Grigor'yan]{Universit\"at Bielefeld, Fakult\"at f\"ur Mathematik, Postfach 100131, D-33501 Bielefeld, Germany}
\email{grigor@math.uni-bielefeld.de}
\author[P. S\"urig]{Philipp S\"urig}
\address[Philipp S\"urig]{Universit\"at Bielefeld, Fakult\"at f\"ur Mathematik, Postfach 100131, D-33501 Bielefeld, Germany}
\email{philipp.suerig@uni-bielefeld.de}
\keywords{Manifolds with ends, heat kernel, isoperimetric inequality}
\subjclass[2020]{58J35, 60J65, 31B05}
\begin{document}
\maketitle

\begin{abstract}
We investigate heat kernel estimates of the form $p_{t}(x, x)\geq c_{x}t^{-\alpha},$ for large enough $t$, where $\alpha$ and $c_{x}$ are positive reals and $c_{x}$ may depend on $x$, on manifolds having at least one end with a polynomial volume growth.
\end{abstract}
\pagestyle{headings}

\tableofcontents

\section{Introduction}

\let\thefootnote\relax\footnotetext{Funded by the Deutsche Forschungsgemeinschaft (DFG, German Research Foundation) – SFB 1283/2 2021 – 317210226}

Let $M$ be a complete connected non-compact Riemannian manifold and $p_{t}(x,y) $ be the \emph{heat kernel} on $M$, that is, the minimal positive fundamental solution of the heat equation $\partial _{t}u=\Delta u$, where $
\Delta $ is the Laplace-Beltrami operator on $M$.
In this paper, we investigate the long time behaviour of $p_{t}(x, x)$ for $t\to +\infty$, $x\in M$. Especially, we are interested in lower bounds for large enough $t$ of the form \begin{equation}\label{polydecayinintr}p_{t}(x, x)\geq c_{x}t^{-\alpha},\end{equation} where $\alpha$ and $c_{x}$ are positive reals and $c_{x}$ may depend on $x$.

Let $V(x, r)=\mu(B(x, r))$ be the volume function of $M$ where $B(x, r)$ denotes the geodesic balls in $M$ and $\mu$ the Riemannian measure on $M$. It was proved by A. Grigor'yan and T. Coulhon in \cite{Coulhon1997}, that if for some $x_{0}\in M$ and all large enough $r$, \begin{equation}\label{polynomialVGinintro}V(x_{0}, r)\leq Cr^{N}\end{equation} where $C$ and $N$ are positive constants, then \begin{equation}\label{heatkerneldecayloginintro}p_{t}(x, x)\geq \frac{c_{x}}{(t\log t)^{N/2}},\end{equation} which obviously implies (\ref{polydecayinintr}).

It is rather surprising that such a weak hypothesis as $(\ref{polynomialVGinintro})$ implies a pointwise lower bound (\ref{heatkerneldecayloginintro}) of the heat kernel. In this paper we obtain heat kernel bounds assuming even weaker hypotheses about $M$. We say that an open connected proper subset $\Omega$ of $M$ is an \emph{end} of $M$ if $\partial\Omega$ is compact but $\overline{\Omega}$ is non-compact (see also Section \ref{On-diagonal heat kernel lower bounds}).
One of our aims here is to obtain lower bounds for the heat kernel assuming only hypotheses about the intrinsic geometry of $\Omega$, although a priori it was not obvious at all that such results can exist.

One of motivations was the following question asked by A. Boulanger in \cite{Boulanger2020CountingPO} (although for a more restricted class of manifolds). Considering the volume function in $\Omega$ given by $$V_{\Omega}(x, r)=\mu(B(x, r)\cap\Omega),$$ Boulanger asked if the heat kernel satisfies (\ref{polydecayinintr}) provided it is known that \begin{equation}\label{polynomialVOmGinintro}V_{\Omega}(x_{0}, r)\leq Cr^{N},\end{equation} for some $x_{0}\in \Omega $ and all $r$ large enough.

A first partial answer to this question was given by A. Grigor'yan, who showed in \cite{Grigoryan2021}, that if (\ref{polynomialVOmGinintro}) holds and $\overline{\Omega}$, considered as a manifold with boundary, is \textit{non-parabolic}, (and hence, $N>2$ in (\ref{polynomialVOmGinintro})) then (\ref{heatkerneldecayloginintro}) is satisfied. More precisely, denoting by $p_{t}^{\Omega}(x, y)$ the heat kernel in $\Omega$ with the Dirichlet boundary condition on $\partial \Omega$, it was proved in \cite{Grigoryan2021} that, for all $x\in \Omega$ and large enough $t$, \begin{equation}\label{dirichlethatkernelinintro}p_{t}^{\Omega}(x, x)\geq \frac{c_{x}}{(t\log t)^{N/2}},\end{equation} which implies (\ref{heatkerneldecayloginintro}) by the comparison principle. 

From a probabilistic point of view, the estimate (\ref{dirichlethatkernelinintro}) for non-parabolic $\overline{\Omega}$ is very natural if one compares it with (\ref{heatkerneldecayloginintro}), since the non-parabolicity of $\overline{\Omega}$ implies that the probability that Brownian motion started in $\Omega$ never hits the boundary $\partial \Omega$ is positive (see [\cite{Grigorextquotesingleyan1999}, Corollary 4.6]). Hence, one expects that the heat kernel in $\overline{\Omega}$ and the heat kernel in $\Omega$ with Dirichlet boundary condition are comparable. 

The main direction of research in this paper is the validity of the estimate (\ref{polydecayinintr}) in the case when $\overline{\Omega}$ is parabolic and the volume function of $\Omega $ satisfies (\ref{polynomialVOmGinintro}). We prove (\ref{polydecayinintr}) for a certain class of manifolds $M$ when $\overline{\Omega}$ is parabolic as well as construct a class of manifolds $M$ with parabolic ends where (\ref{polydecayinintr}) does not hold. 

In Section \ref{On-diagonal heat kernel lower bounds} we are concerned with positive results. One of our main results - \textbf{Theorem \ref{thmlocharsph}}, ensures the estimate (\ref{polydecayinintr}) when $\overline{\Omega}$ is a \emph{locally Harnack} manifold (see Subsection \ref{positiveresultslower} for the definition). In order to handle difficulties that come from the parabolicity of the end, we use the method of $h$-\emph{transform} (see Subsection \ref{subsechtrans}). For that we construct a positive harmonic function $h$ in $\Omega$ and define a new measure $\widetilde{\mu}$ by $d\widetilde{\mu}=h^{2}d\mu$. Thus, we obtain a \emph{weighted manifold} $\left(\overline{\Omega},\widetilde{\mu}\right)$. We prove that this manifold is non-parabolic, satisfies the polynomial volume growth and, hence, the heat kernel $\widetilde{p}_{t}^{\Omega}$ of $\left(\Omega,\widetilde{\mu}\right)$ satisfies the lower bound (\ref{dirichlethatkernelinintro}). Then a similar lower bound for $p_{t}^{\Omega}$ and, hence, for $p_{t}$, follows from the identity
$$p_{t}^{\Omega}(x, x)=h^{2}(x)\widetilde{p}^{\Omega}_{t}(x, x)$$ (see Lemma \ref{relheattildeohne}). Note that the techniques of $h$-transform for obtaining heat kernel bounds was used in \cite{grigor2009heat} and \cite{Grigorextquotesingleyan2001} although in different settings.

In Section \ref{On-diagonal heat kernel upper bounds} we construct examples of manifolds $M$ having a parabolic end $\Omega$ with finite volume (in particular, satisfying (\ref{polynomialVOmGinintro})) but such that the heat kernel $p_{t}\left( x,x\right)$ decays \emph{superpolynomially} as $t\rightarrow \infty.$ In fact, the end $\Omega$ is constructed as a \textit{model manifold} (see Section \ref{secisobdry} for the definition of this term) that topologically coincides with $\left( 0,+\infty \right) \times \mathbb{S}^{n-1}$, $n\geq 2$, while the Riemannian metric on $\Omega$ is given by \begin{equation}\label{metricinintrowarp}ds^{2}=dr^{2}+\psi^{2}(r)d\theta^{2},\end{equation} where $d\theta ^{2}$ is a standard Riemannian metric on $\mathbb{S}^{n-1}$ and \begin{equation}\label{defvonpisinintro}\psi(r)=e^{-\frac{1}{n-1}r^{\alpha}},\end{equation}
with $0<\alpha \leq 1.$ Our second main result -\textbf{Theorem \ref{heatkernelforsmallendviah}}, says that for a certain manifold $M$ with this end $\Omega $ the following heat kernel estimate
holds: \begin{equation}\label{introheatkernelupper}p_{t}(x, x)\leq C_{x}\exp\left(-Ct^{\frac{\alpha}{2-\alpha}}\right),\end{equation} for all $x\in M$ and large enough $t$.
The estimate $(\ref{introheatkernelupper})$ follows from \textbf{Theorem \ref{heatonsumweightfaber}} where we obtain the upper bound of the heat kernel $\widetilde{p}_{t}$ of a weighted manifold $\left( M,\widetilde{\mu }\right) $ after an appropriate $h$-transform. In this theorem we prove that 
\begin{equation}\label{weightedheatupperinintro}\widetilde{p}_{t}(x, x)\leq C\exp\left(-C_{1}t^{\frac{\alpha}{2-\alpha}}\right).\end{equation} In fact, this decay is sharp, meaning that we have a matching lower bound $$\sup_{x\in M}\widetilde{p}_{t}(x, x)\geq c\exp\left(-C_{2}t^{\frac{\alpha}{2-\alpha}}\right)$$ (see the remark after Theorem \ref{heatonsumweightfaber}).

The key ingredient in the proof of \textbf{Theorem \ref{heatonsumweightfaber}} is obtaining a \textit{lower isoperimetric} function $J$ on $(\overline{\Omega}, \widetilde{\mu})$, which yields then the heat kernel upper bound (\ref{weightedheatupperinintro}) by a well-known technique (see [\cite{Grigoryan1999}, Proposition 7.1] and Proposition \ref{thmfaberheatupper}).
We say that a function $J$ on $[0, +\infty)$ is a lower isoperimetric function for $(\overline{\Omega}, \widetilde{\mu})$ if, for any precompact open set $U\subset \overline{\Omega}$ with smooth boundary, \begin{equation}\label{isoperimetricdefJintro}\widetilde{\mu}^{+}(U)\geq J(\widetilde{\mu}(U)),\end{equation} where $\widetilde{\mu}^{+}$ denotes the perimeter with respect to the measure $\widetilde{\mu}$ (see Section \ref{Isoperimetric inequalities for warped products} for more details).

In Section \ref{Isoperimetric inequalities for warped products} we present a technique for obtaining isoperimetric inequalities on \emph{warped products} of weighted manifolds. The isoperimetric inequality on \emph{Riemannian products} was proved in \cite{grigor1985isoperimetric}. We develop further the method of \cite{grigor1985isoperimetric} to deal with warped products, in particular, with the metric (\ref{metricinintrowarp}). The main result here is stated in \textbf{Theorem \ref{thm1iso}}.

Given two weighted manifolds $(M_{1}, \mu_{1})$ and $(M_{2}, \mu_{2})$ consider the weighted manifold $(M, \mu)$ such that $M=M_{1}\times M_{2}$ as topological spaces, the Riemannian metric $ds^{2}$ on $M$ is defined by $$ds^{2}=dx^{2}+\psi^{2}(x)dy^{2},$$ with $\psi$ being a smooth positive function on $M_{1}$ and $dx^{2}$ and $dy^{2}$ denoting the Riemannian metrics on $M_{1}$ and $M_{2}$, respectively and measure $\mu$ on $M$ is defined by $\mu=\mu_{1}\times \mu_{2}$. Assume that the function $\psi$ is bounded and $(M_{1}, \mu_{1})$ and $(M_{2}, \mu_{2})$ admit continuous lower isoperimetric functions $J_{1}$ and $J_{2}$, respectively. Then we prove in \textbf{Theorem \ref{thm1iso}}, that $(M, \mu)$ admits a lower isoperimetric function $$J(v)=c\inf_{\varphi, \phi}\left(\int_{0}^{\infty} {J_{1}(\varphi(t))dt}+\int_{0}^{\infty}{J_{2}(\phi(s))ds}.\right),$$ for some positive constant $c>0$ and where $\varphi$ and $\phi$ are \textit{generalized mutually inverse} functions such that $$v=\int_{0}^{\infty} {\varphi(t) dt}=\int_{0}^{\infty}{\phi(s)ds}.$$

As a consequence of \textbf{Theorem \ref{thm1iso}}, we obtain in Theorem \ref{propisoformod} that the
aforementioned weighted model manifold $\left(\overline{\Omega}, \widetilde{\mu}\right)$ admits a lower isoperimetric function $J$ such that for large enough $v$, 
$$J(v)=\frac{cv}{(\log v)^{\frac{2-2\alpha}{\alpha}}},$$ 
for some positive constant $c>0$, which leads to the estimate (\ref{weightedheatupperinintro}).


Even though we managed to give both positive and negative results for manifolds with parabolic end concerning the estimate (\ref{polydecayinintr}), a gap still remains. Closing this gap seems to be interesting for future work, for example, it might be desirable to construct a manifold with parabolic end of infinite volume for which (\ref{polydecayinintr}) does not hold. 

NOTATION. For any nonnegative functions $f, g$, we write $f\simeq g$ if there exists a constant $C>1$ such that $$C^{-1}f\leq g\leq Cf.$$

\section{On-diagonal heat kernel lower bounds}\label{On-diagonal heat kernel lower bounds}

Let $M$ be a non-compact Riemannian manifold with boundary $\delta M$ (which may be empty). Given a smooth positive function $\omega$ on $M$, let $\mu$ be the measure defined by 
\begin{equation*}\label{defweightmeasureome}d\mu=\omega^{2} d\textnormal{vol},\end{equation*} where $d\textnormal{vol}$ denotes the Riemannian measure on $M$. Similarly. we define $\mu'$ as the measure with density $\omega^{2}$ with respect to the Riemannian measure of codimension 1 on any smooth hypersurface. The pair ($M, \mu$) is called \textit{weighted manifold}.

The Riemannian metric induces the Riemannian distance $d(x, y),~ x, y\in M$.
Let $B(x, r)$ denote the geodesic ball of radius $r$ centered at $x$, that is $$B(x, r)=\{x\in M: d(x, y)<r\}$$ and $V(x, r)$ its volume on ($M, \mu$) given by $$V(x, r)=\mu(B(x, r)).$$
We say that $M$ is complete if the metric space ($M, d$) is complete. It is known that $M$ is complete, if and only if, all balls $B(x, r)$ are precompact sets. In this case, $V(x, r)$ is finite.

The Laplace operator $\Delta_{\mu}$ is the second order differential operator defined by 
$$\Delta_{\mu}f=div_{\mu}(\nabla f)=\omega^{-2}div(\omega^{2}\nabla f).$$ 
If $\omega\equiv 1$, then $\Delta_{\mu}$ coincides with the Laplace-Beltrami operator 
$\Delta=div \circ \nabla$.

Consider the Dirichlet form $$\mathcal{E}(u, v)=\int_{M}{(\nabla u, \nabla v)d\mu},$$ defined on the space $C_{0}^{\infty}(M)$ of smooth functions with compact support. The form $\mathcal{E}$ is closable in $L^{2}(M, \mu)$ and positive definit. Let us from now on denote by $\Delta_{\mu}$ its infinitisemal generator. By integration by parts, we obtain for all $u, v \in  C_{0}^{\infty}(M)$, \begin{equation}\label{intbypartsdelta}\mathcal{E}(u, v)=\int_{M}{(\nabla u, \nabla v)d\mu}=-\int_{M}{v\Delta ud\mu}-\int_{\delta M}{v\frac{\partial u}{\partial \nu}d\mu'},\end{equation} where $\nu$ denotes the inward unit normal vector field on $\delta M$. 
A function $u$ is called \textit{harmonic} if $u\in C^{2}(M)$, $\Delta u=0$ in $M\setminus \delta M$ and $\frac{\partial u}{\partial \nu}=0$ on $\delta M$.

The operator $\Delta_{\mu}$ generates the heat semi-group $P_{t}:=e^{t\Delta_{\mu}}$ which possesses a positive smooth, symmetric kernel $p_{t}(x, y)$.

Let $\Omega$ be an open subset of $M$ and denote $\delta \Omega:=\delta M\cap \Omega$. Then we can consider $\Omega$ as a manifold with boundary $\delta \Omega$. Hence, using the same constructions as above for $\Omega$ instead of $M$, we obtain the heat semigroup $P_{t}^{\Omega}$ with the heat kernel $p_{t}^{\Omega}(x, y)$,  which satisfies the Dirichlet boundary condition on $\partial \Omega$ and the Neumann boundary condition on $\delta \Omega$.

\begin{defin}\normalfont Let $M$ be a complete non-compact manifold. Then we call $\Omega$ an \textit{end} of $M$, if $\Omega$ is an open connected proper subset of $M$ such that $\overline{\Omega}$ is non-compact but $\partial \Omega$ is compact (in particular, when $\partial\Omega$ is a smooth closed hypersurface).
\end{defin}

In many cases, the end $\Omega$ can be considered as an exterior of a compact set of another manifold $M_{0}$, that means, $\Omega$ is isometric to $M_{0}\setminus K_{0}$ for some compact set $K_{0}\subset M_{0}$. If $(M, \mu)$ and $(M_{0}, \mu_{0})$ are weighted manifolds, with $\omega^{2}$ being the smooth density of measure $\mu$ and the measure $\mu_{0}$ having smooth density $\omega_{0}^{2}$, the isometry is meant in the sense of weighted manifolds, that is, this isometry maps measure $\mu$ to $\mu_{0}$ so that $\omega_{0}=\omega$ on $\Omega$.

A function $u\in C^{2}(M)$ is called \textit{superharmonic} if $\Delta_{\mu} u\leq 0$ in $M\setminus \delta M$ and $\frac{\partial u}{\partial \nu}\geq 0$ on $\delta M$, where $\nu$ is the outward normal unit vector field on $\delta M$. A \textit{subharmonic} function $u\in C^{2}(M)$ satisfies the opposite inequalities.
\begin{defin}\normalfont We say that a weighted manifold $(M, \mu)$ is \textit{parabolic} if any positive superharmonic function on $M$ is constant, and \textit{non-parabolic} otherwise.
\end{defin}

\begin{defin}\normalfont
Let $(M, \mu)$ be a weighted manifold and $\Omega$ be a subset of $M$. Then we define \textit{the volume function} of $\Omega$, for all $x\in M$ and $r>0$, by $$V_{\Omega}(x, r)=\mu(B_{\Omega}(x, r)),$$ where $B_{\Omega}(x, r)=B(x, r)\cap \Omega$.
\end{defin}

\begin{defin}\normalfont
Let $(M, \mu)$ be a weighted manifold. We say that $\Omega\subset M$ satisfies the \textit{polynomial volume growth condition}, if there exist $x_{0}\in \Omega$ and $r_{0}>0$ such that for all $r\geq r_{0}$, \begin{equation}\label{PolygrowthOM}V_{\Omega}(x_{0}, r)\leq Cr^{N},\end{equation} where $N$ and $C$ are positive constants.
\end{defin}

\begin{thm}[\cite{Grigoryan2021}, Theorem 8.3]\label{thmlowerptbd}
Let $M$ be a complete non-compact manifold with end $\Omega$. Assume that $\left(\overline{\Omega}, \mu\right)$ is a weighted manifold such that
\begin{itemize}
        \item $\left(\overline{\Omega}, \mu\right)$ is non-parabolic as a manifold with boundary $\partial \Omega\cup \delta \Omega$.
        \item $\Omega$ satisfies the polynomial volume growth condition (\ref{PolygrowthOM}) with $N>2$.
\end{itemize} Then for any $x\in \Omega$ there exist $c_{x}>0$ and $t_{x}>0$ such that for all $t\geq t_{x}$, \begin{equation}\label{lowerbndend}p_{t}^{\Omega}(x, x)\geq \frac{c_{x}}{(t\log t)^{N/2}},\end{equation} where $c_{x}$ and $t_{x}$ depend on $x$.
        
Consequently, if $(M, \mu)$ is a complete non-compact weighted manifold with end $\Omega$ such that the above conditions are satisfied, we have for any $x\in M$ and all $t\geq t_{x}$, \begin{equation}\label{nonparaendpoldecglob}p_{t}(x, x)\geq \frac{c_{x}}{(t\log t)^{N/2}}.\end{equation}
\end{thm}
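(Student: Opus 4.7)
My plan is to prove Theorem \ref{thmlowerptbd} by a Doob $h$-transform, reducing it to the Coulhon--Grigor'yan lower bound from \cite{Coulhon1997}. The underlying intuition is that the non-parabolicity of $\overline{\Omega}$ provides a positive harmonic function capturing ``the mass of Brownian paths that never touch $\partial\Omega$'', and that after the $h$-transform the resulting weighted manifold behaves like a complete one with the same polynomial volume growth.

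First I would construct the ``escape function'' $h$. Because $(\overline{\Omega},\mu)$ is non-parabolic as a manifold with boundary $\partial\Omega\cup\delta\Omega$, the hitting probability of $\partial\Omega$ by the process reflected on $\delta\Omega$ and killed on $\partial\Omega$ is strictly less than $1$ somewhere in $\Omega$. Setting $h(x):=1-\mathbb{P}_x(\tau_{\partial\Omega}<\infty)$, standard arguments give that $h$ is bounded by $1$, is weighted-harmonic in $\Omega$ with Neumann condition on $\delta\Omega$, vanishes on $\partial\Omega$, and is strictly positive in $\Omega$ by the parabolic Harnack inequality.

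Second, I would pass to the $h$-transformed weighted manifold by setting $d\widetilde{\mu}=h^{2}\,d\mu$ on $\Omega$. The Doob identity then gives
$$ p_t^{\Omega}(x,y)=h(x)h(y)\,\widetilde{p}_t^{\Omega}(x,y), $$
where $\widetilde{p}_t^{\Omega}$ is the heat kernel of the weighted Laplacian $\Delta_{\widetilde{\mu}}$. Since $h\leq 1$, the new volume function obeys $\widetilde{V}_{\Omega}(x,r)\leq V_{\Omega}(x,r)\leq Cr^{N}$, so condition (\ref{PolygrowthOM}) is preserved with the same exponent $N$. In the transformed picture the associated process on $(\Omega,\widetilde{\mu})$ avoids $\partial\Omega$ almost surely, so the setting is effectively that of a complete non-compact weighted manifold. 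Applying the Coulhon--Grigor'yan lower bound to $(\Omega,\widetilde{\mu})$ then yields $\widetilde{p}_t^{\Omega}(x,x)\geq \widetilde{c}_x(t\log t)^{-N/2}$ for large $t$, and plugging this into the $h$-transform identity with $y=x$ gives (\ref{lowerbndend}) with $c_x=h^{2}(x)\widetilde{c}_x$.

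The main obstacle, as I see it, is making this last step rigorous: one must verify that the hypotheses of Coulhon--Grigor'yan (originally stated for complete manifolds without boundary) genuinely apply to $(\Omega,\widetilde{\mu})$, whose measure degenerates on $\partial\Omega$. This amounts to showing that the Dirichlet form associated with $\widetilde{\mu}$ on $\Omega$ is conservative, or equivalently that $\widetilde{\mu}$-completeness holds in the appropriate weighted sense; this relies on a careful control of $h$ near $\partial\Omega$, together with the observation that the $h$-transformed process is non-explosive precisely because the original process is killed at $\partial\Omega$ while being weighted by its own survival. Once this is settled, the global estimate (\ref{nonparaendpoldecglob}) follows from the comparison $p_t(x,x)\geq p_t^{\Omega}(x,x)$ for $x\in\Omega$ and, for $x\in M\setminus\Omega$, from a short-time parabolic Harnack chain connecting $x$ to a nearby point inside $\Omega$.
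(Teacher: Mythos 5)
The statement you are asked to prove is Theorem~8.3 of \cite{Grigoryan2021}; the paper under review imports it without reproducing a proof, so there is no ``paper's own proof'' to compare against directly. Your proposal must therefore be judged on its own. The overall strategy---construct the escape function $h(x)=\mathbb{P}_x(\tau_{\partial\Omega}=\infty)$, pass to the weighted manifold $\left(\Omega,\widetilde{\mu}\right)$ with $d\widetilde{\mu}=h^2d\mu$ via Lemma~\ref{relheattildeohne}, observe $\widetilde{V}_\Omega\leq V_\Omega$ because $h\leq 1$, and invoke the Coulhon--Grigor'yan lower bound---is indeed the natural $h$-transform route and is in the same spirit as the technique the present paper employs elsewhere (Lemma~\ref{Lemmaconstructionh}, Theorem~\ref{thmlocharsph}). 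Note, though, that the $h$ the paper actually constructs in Lemma~\ref{Lemmaconstructionh} is $1+v\geq 1$ and \emph{increases} the measure, whereas yours vanishes on $\partial\Omega$; the two transforms serve opposite purposes (yours is meant to remove the killing, theirs to produce non-parabolicity so that this very theorem can be applied).

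The genuine gap is the final step, and you identify it but do not close it. Establishing conservativity of $\widetilde{P}^\Omega_t$ (equivalently $P_t^\Omega h=h$) is necessary but not sufficient to apply \cite{Coulhon1997}. That theorem is proved for geodesically complete manifolds without boundary, and its argument rests on more than $\widetilde{P}_t\mathbf{1}=\mathbf{1}$: it also uses that balls are precompact, and it relies on an escape/integrated-maximum-principle estimate $\int_{M\setminus B(x,r)}p_t(x,y)\,d\mu(y)\lesssim e^{-cr^2/t}$ whose standard proof again assumes completeness. Here $(\Omega,d_\Omega)$ is metrically incomplete along $\partial\Omega$, the intrinsic balls $B_\Omega(x,r)$ are not precompact in $\Omega$, and the measure $\widetilde{\mu}=h^2\mu$ degenerates like $\mathrm{dist}(\cdot,\partial\Omega)^2$ there; you would have to show that the singular drift $2\nabla\log h$ really does make the weighted manifold behave like a complete one for each of these steps, not just for conservativity. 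None of this is carried out, so as written the proposal does not constitute a proof.

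A cleaner route that sidesteps all of this is to use $h$ directly rather than through an $h$-transform: since $h$ is $P_t^\Omega$-invariant, the survival probability obeys $\int_\Omega p_t^\Omega(x,y)\,d\mu(y)=\mathbb{P}_x(\tau_{\partial\Omega}>t)\geq h(x)>0$ for all $t$. Then the Cauchy--Schwarz estimate
$p_{2t}^\Omega(x,x)\geq\bigl(\int_{B(x,r)\cap\Omega}p_t^\Omega(x,y)\,d\mu(y)\bigr)^2/V_\Omega(x,r)$
combined with a Davies--Gaffney type escape bound for the Dirichlet heat kernel (which holds for any regular Dirichlet form, completeness not required) lets you choose $r=r(t)$ so that the escape term is at most $h(x)/2$, and then the polynomial bound $V_\Omega(x,r)\leq C r^{N}$ produces the $(t\log t)^{-N/2}$ rate directly, yielding $(\ref{lowerbndend})$ with $c_x$ proportional to $h(x)^2$. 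Your concluding passage from $p_t^\Omega$ to $p_t$ (comparison for $x\in\Omega$ and a local parabolic Harnack chain for $x\notin\Omega$) is fine.
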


\subsection{$h$-transform}\label{subsechtrans}

Recall that any smooth positive function $h$ induces a new weighted manifold $(M, \widetilde{\mu})$, where the measure $\widetilde{\mu}$ is defined by \begin{equation}\label{defvonmeasuremith}d\widetilde{\mu}=h^{2}d\mu\end{equation}
and we denote, for all $r>0$ and $x\in M$, by $\widetilde{V}(x, r)$ the volume function of measure $\widetilde{\mu}$.
The Laplace operator $\Delta_{\widetilde{\mu}}$ on $(M, \widetilde{\mu})$ is then given by $$\Delta_{\widetilde{\mu}}f=h^{-2}div_{\mu}(h^{2}\nabla f)=(h\omega)^{-2}div((h\omega)^{2}\nabla f).$$

\begin{lemma}[\cite{Grigorextquotesingleyan2001}, Lemma 4.1]\label{neuesmasdelta}
Assume that $\Omega\subset M$ is open and $\Delta_{\mu} h=0$ in $\Omega$. Then for any smooth function $f$ in $\Omega$, we have \begin{equation}\label{doobtrnsfmit}\Delta_{\widetilde{\mu}}f=h^{-1}\Delta_{\mu}(hf).\end{equation}
\end{lemma}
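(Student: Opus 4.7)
The plan is to expand both sides pointwise in $\Omega$ using the Leibniz rule for the weighted divergence $\mathrm{div}_\mu$, and then use the harmonicity hypothesis $\Delta_\mu h = 0$ to match the two expressions. Since the claimed identity is a purely local statement on the open set $\Omega$, no boundary considerations will be needed and the argument reduces to a short algebraic computation.

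First I would compute the left-hand side. By the definition given in the excerpt, $\Delta_{\widetilde{\mu}} f = h^{-2}\,\mathrm{div}_\mu(h^2\nabla f)$. Applying the Leibniz rule $\mathrm{div}_\mu(\varphi X) = \varphi\,\mathrm{div}_\mu X + \langle \nabla \varphi, X\rangle$ with $\varphi = h^2$ and $X = \nabla f$, and using $\nabla(h^2) = 2h\nabla h$, I obtain
\begin{equation*}
\Delta_{\widetilde{\mu}} f \;=\; \Delta_\mu f \,+\, 2h^{-1}\langle \nabla h, \nabla f\rangle.
\end{equation*}

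Next I would expand the right-hand side. Writing $\nabla(hf) = h\nabla f + f\nabla h$ and applying the Leibniz rule to each of the two terms gives
\begin{equation*}
\Delta_\mu(hf) \;=\; h\,\Delta_\mu f \,+\, 2\langle \nabla h, \nabla f\rangle \,+\, f\,\Delta_\mu h.
\end{equation*}
Dividing by $h$ and using the hypothesis $\Delta_\mu h = 0$ in $\Omega$ to kill the last term, the right-hand side reduces to exactly the same expression as the left-hand side, which proves the lemma.

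I do not anticipate any real obstacle here: this is the standard Doob $h$-transform identity, and harmonicity of $h$ is precisely what cancels the stray $fh^{-1}\Delta_\mu h$ term that the Leibniz expansion would otherwise leave behind. The only point worth checking explicitly is that the product rule $\mathrm{div}_\mu(\varphi X) = \varphi\,\mathrm{div}_\mu X + \langle \nabla\varphi, X\rangle$ is valid for the weighted divergence, which is immediate from the defining formula $\mathrm{div}_\mu X = \omega^{-2}\mathrm{div}(\omega^2 X)$ together with the classical Leibniz rule for the unweighted divergence.
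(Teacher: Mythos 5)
Your computation is correct, and it is the standard Doob $h$-transform identity: the weighted Leibniz rule $\mathrm{div}_\mu(\varphi X) = \varphi\,\mathrm{div}_\mu X + \langle \nabla\varphi, X\rangle$ gives $\Delta_{\widetilde{\mu}} f = \Delta_\mu f + 2h^{-1}\langle\nabla h,\nabla f\rangle$ on one side and $h^{-1}\Delta_\mu(hf) = \Delta_\mu f + 2h^{-1}\langle\nabla h,\nabla f\rangle + h^{-1}f\,\Delta_\mu h$ on the other, and the harmonicity of $h$ kills the last term. Note that the paper itself does not prove this lemma but only cites it (Lemma 4.1 of \cite{Grigorextquotesingleyan2001}), so there is no in-paper argument to compare against; your derivation is exactly the expected one.
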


\begin{lemma}[\cite{Grigorextquotesingleyan2001}, Lemma 4.2]\label{relheattildeohne}
Assume that $h$ is a harmonic function in an open set $\Omega\subset M$. Then the Dirichlet heat kernels $p^{\Omega}_{t}$ and $\widetilde{p}^{\Omega}_{t}$ in $\Omega$, associated with the corresponding Laplace operators $\Delta_{\mu}$ and $\Delta_{\widetilde{\mu}}$, are related by \begin{equation}\label{relhtransformheat} p^{\Omega}_{t}(x, y)=h(x)h(y)\widetilde{p}^{\Omega}_{t}(x, y),\end{equation} for all $t>0$ and $x, y\in \Omega$.
\end{lemma}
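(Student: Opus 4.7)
\medskip

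\noindent\textbf{Proof plan.} The plan is to reduce the claim to uniqueness (or minimality) of the Dirichlet heat kernel by showing directly that $q_{t}(x,y):=h(x)h(y)\widetilde{p}_{t}^{\Omega}(x,y)$ enjoys the defining properties of $p_{t}^{\Omega}$. To this end, fix $u_{0}\in C_{0}^{\infty}(\Omega)$ and set $v_{0}:=u_{0}/h\in C_{0}^{\infty}(\Omega)$; since $h$ is smooth and strictly positive, $v_{0}$ is well defined and lies in a class on which the semigroup $\widetilde{P}_{t}^{\Omega}$ associated with $\Delta_{\widetilde{\mu}}$ acts. Put
\[
\widetilde{u}(t,x):=\widetilde{P}_{t}^{\Omega}v_{0}(x)=\int_{\Omega}\widetilde{p}_{t}^{\Omega}(x,y)\,v_{0}(y)\,d\widetilde{\mu}(y), \qquad u(t,x):=h(x)\,\widetilde{u}(t,x).
\]

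\medskip

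\noindent The key step is to check that $u$ solves the Dirichlet heat problem on $(\Omega,\mu)$ with initial datum $u_{0}$. Using $\partial_{t}\widetilde{u}=\Delta_{\widetilde{\mu}}\widetilde{u}$ and Lemma \ref{neuesmasdelta} applied to $f=\widetilde{u}$ (which is legitimate since $\Delta_{\mu}h=0$ in $\Omega$), one obtains
\[
\partial_{t}u=h\,\partial_{t}\widetilde{u}=h\,\Delta_{\widetilde{\mu}}\widetilde{u}=h\cdot h^{-1}\Delta_{\mu}(h\widetilde{u})=\Delta_{\mu}u.
\]
The initial condition reads $u(0,x)=h(x)v_{0}(x)=u_{0}(x)$. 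The Dirichlet boundary condition on $\partial\Omega$ is preserved because $h$ is bounded on the compact set $\partial\Omega$ and $\widetilde{u}(t,\cdot)$ vanishes there. When $\delta\Omega\neq\emptyset$, the Neumann condition on $\delta\Omega$ follows from $\partial_{\nu}u=\widetilde{u}\,\partial_{\nu}h+h\,\partial_{\nu}\widetilde{u}=0$, where both terms vanish because $h$ is harmonic in the weighted sense (hence satisfies Neumann on $\delta M$) and $\widetilde{u}$ satisfies Neumann on $\delta\Omega$.

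\medskip

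\noindent Invoking uniqueness for the Dirichlet heat problem, one concludes $u(t,x)=P_{t}^{\Omega}u_{0}(x)$, i.e.
\[
\int_{\Omega}p_{t}^{\Omega}(x,y)\,u_{0}(y)\,d\mu(y)=h(x)\int_{\Omega}\widetilde{p}_{t}^{\Omega}(x,y)\,\frac{u_{0}(y)}{h(y)}\,h^{2}(y)\,d\mu(y)=\int_{\Omega}q_{t}(x,y)\,u_{0}(y)\,d\mu(y).
\]
Since $u_{0}\in C_{0}^{\infty}(\Omega)$ is arbitrary and $p_{t}^{\Omega},\,q_{t}$ are continuous in $y$, identity \eqref{relhtransformheat} follows for all $x,y\in\Omega$ and $t>0$.

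\medskip

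\noindent\textbf{Main obstacle.} The delicate point is the justification of ``uniqueness'' on the possibly non-precompact, non-smooth set $\Omega$, since the Dirichlet heat kernel is defined as the \emph{minimal} positive fundamental solution. The clean way I would handle this is by exhaustion: choose precompact open sets $U_{n}\Subset\Omega$ with smooth boundary and $U_{n}\uparrow\Omega$, on which the argument above works in the standard $L^{2}$-semigroup framework, yielding $p_{t}^{U_{n}}(x,y)=h(x)h(y)\widetilde{p}_{t}^{U_{n}}(x,y)$. Then one passes to the limit using monotone convergence of Dirichlet heat kernels $p_{t}^{U_{n}}\uparrow p_{t}^{\Omega}$ and $\widetilde{p}_{t}^{U_{n}}\uparrow\widetilde{p}_{t}^{\Omega}$, which preserves the identity. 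This is where the hypothesis that $h$ is a \emph{smooth positive} harmonic function is essential: both the multiplication by $h^{\pm 1}$ and the intertwining relation from Lemma \ref{neuesmasdelta} stay valid on each $U_{n}$, and strict positivity of $h$ keeps $v_{0}=u_{0}/h$ in the right class at every stage.
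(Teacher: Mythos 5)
The paper does not prove this lemma; it is cited verbatim from \cite{Grigorextquotesingleyan2001}, Lemma 4.2. So the only question is whether your blind argument is sound, and it is. What you give is the PDE-level form of the standard argument: the map $U\colon f\mapsto hf$ sends $L^{2}(\Omega,\widetilde{\mu})$ isometrically onto $L^{2}(\Omega,\mu)$ (since $\|hf\|_{L^{2}(\mu)}^{2}=\int |f|^{2}h^{2}\,d\mu=\|f\|_{L^{2}(\widetilde{\mu})}^{2}$), and Lemma~\ref{neuesmasdelta} says precisely that $\Delta_{\widetilde{\mu}}=U^{-1}\Delta_{\mu}U$ on the set where $h$ is $\Delta_{\mu}$-harmonic. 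The slickest way to finish, and I believe what the cited source does, is to observe that $U$ in fact carries the Dirichlet form $\mathcal{E}_{\widetilde{\mu}}$ into $\mathcal{E}_{\mu}$: expanding $\nabla(hf)=f\nabla h+h\nabla f$ and integrating by parts against $\nabla h$ using harmonicity of $h$ kills the cross term, leaving $\int|\nabla(hf)|^{2}\,d\mu=\int h^{2}|\nabla f|^{2}\,d\mu=\mathcal{E}_{\widetilde{\mu}}(f,f)$. Unitary equivalence of the forms gives unitary equivalence of the generators and hence of the semigroups, which is the kernel identity \eqref{relhtransformheat} — without ever invoking a separate ``uniqueness of the heat solution'' step, which is the one place where your write-up gestures rather than argues. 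Your exhaustion-by-precompact-$U_{n}$ plus monotone convergence of Dirichlet kernels is a legitimate way to repair that gesture and reaches the same conclusion; the functional-analytic route simply makes the repair unnecessary. Both approaches use the same two ingredients ($h>0$ smooth; $\Delta_{\mu}h=0$ in $\Omega$), and your handling of the Dirichlet condition on $\partial\Omega$ and the Neumann condition on $\delta\Omega$ is correct.
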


\begin{rem}\normalfont
In particular, if we assume that $h$ is harmonic in $M$, we get that the heat kernels are related by \begin{equation}\label{relationofheatkernelsends}\widetilde{p}_{t}(x, y)=\frac{p_{t}(x, y)}{h(x)h(y)}\end{equation} for all $t>0$ and $x, y\in M$.
\end{rem}

\begin{defin}\normalfont Let $\Omega$ be an open set in $M$ and $K$ be a compact set in $\Omega$. Then we call the pair $(K, \Omega)$ a \textit{capacitor} and define the capacity $\textnormal{cap}(K, \Omega)$ by \begin{equation}\label{Defcap}\textnormal{cap}(K, \Omega)=\inf_{\phi\in \mathcal{T}(K, \Omega)}\int_{\Omega}{|\nabla \phi|^{2}d\mu},\end{equation} where $\mathcal{T}(K, \Omega)$ is the set of test functions defined by \begin{equation}\label{testfunkcap}\mathcal{T}(K, \Omega)=\{\phi\in C_{0}^{\infty}(\Omega): \phi|_{K}=1\}.\end{equation}
\end{defin}

Let $\Omega$ be precompact. Then it is known that the Dirichlet integral in (\ref{Defcap}) is minimized by a harmonic function $\varphi$, so that the infimum is attained by the weak solution to the Dirichlet problem in $\Omega\setminus \overline{K}$: $$\left\{ \begin{array}{l} \Delta \varphi=0 \\ \varphi|_{\partial K}=1
\\\varphi|_{\partial \Omega}=0.\\\frac{\partial \varphi}{\partial \nu}|_{\delta (\Omega\setminus \overline{K})}=0 \end{array}\right. $$
The function $\varphi$ is called the \textit{equilibrium potential} of the capacitor $(K, \Omega)$.

We always have the following identity: \begin{equation}\label{fluxvscap}\textnormal{cap}(K, \Omega)=\int_{\Omega}{|\nabla \varphi|^{2}d\mu}=\int_{\Omega\setminus \overline{K}}{|\nabla \varphi|^{2}d\mu}=-\textnormal{flux}(\varphi),
\end{equation} where $\textnormal{flux}(\varphi)$ is defined by $$\textnormal{flux}(\varphi):=\int_{\partial W}{\frac{\partial \varphi}{\partial \nu}d\mu'},$$ where $W$ is any open region in the domain of $\varphi$ with smooth precompact boundary such that $\overline{K}\subset W$ and $\nu$ is the outward normal unit vector field on $\partial W$. By the Green formula (\ref{intbypartsdelta}) and the harmonicity of $\varphi$, $\textnormal{flux}(\varphi)$ does not depend on the choice of $W$.

\begin{defin}\normalfont
We say that a precompact open set $U\subset M$ has locally positive capacity, if there exists a precompact open set $\Omega$ such that $\overline{U}\subset \Omega$ and $\textnormal{cap}(U, \Omega)>0$.
\end{defin}

It is a consequence of the local Poincaré inequality, that if $\textnormal{cap}(U, \Omega)>0$ for some precompact open $\Omega$, then this is true for all precompact open $\Omega$ containing $\overline{U}$.

\begin{lemma}\label{Lemmaconstructionh}
Let $(M, \mu)$ be a complete, non-compact weighted manifold and $K$ be a compact set in $M$ with locally positive capacity and smooth boundary $\partial K$. Fix some $x_{0}\in M$ and set $B_{r}:=B(x_{0}, r)$ for all $r>0$ and assume that $K$ is contained in a ball $B_{r_{0}}$ for some $r_{0}>0$. Let us also set $\Omega=M\setminus K$, so that $\left(\overline{\Omega}, \mu\right)$ becomes a weighted manifold with boundary.
Then there exists a positive smooth function $h$ in $\overline{\Omega}$ that is harmonic in $\Omega$ and satisfies for all $r\geq r_{0}$, \begin{equation}\label{vergleichminicapah}\min_{\partial B_{r}}h\leq C~\textnormal{cap}(K, B_{r})^{-1},\end{equation} for some constant $C>0$. Moreover, the weighted manifold $\left(\overline{\Omega}, \widetilde{\mu}\right)$ is non-parabolic, where measure $\widetilde{\mu}$ on $\overline{\Omega}$ is defined by (\ref{defvonmeasuremith}).\end{lemma}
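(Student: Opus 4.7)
I construct $h$ as a locally uniform limit of modified equilibrium potentials of the capacitors $(K,B_{r})$, chosen so that the flux across $\partial K$ is fixed. Concretely, for each $r>r_{0}$ let $\varphi_{r}$ be the equilibrium potential of $(K,B_{r})$ --- harmonic in $B_{r}\setminus K$ with $\varphi_{r}=1$ on $\partial K$, $\varphi_{r}=0$ on $\partial B_{r}$ --- and set
\[
h_{r}:=1+\frac{1-\varphi_{r}}{\mathrm{cap}(K,B_{r})}.
\]
Then $h_{r}\geq 1$ is smooth and harmonic in $B_{r}\setminus K$, satisfies $h_{r}=1$ on $\partial K$ and $h_{r}=1+\mathrm{cap}(K,B_{r})^{-1}$ on $\partial B_{r}$, and its outward flux across $\partial K$ (with the normal pointing out of $\overline{\Omega}$) equals $-1$. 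Combining Harnack's inequality applied to $h_{r}-1\geq 0$ with boundary regularity near $\partial K$ (where $h_{r}\equiv 1$), the family $\{h_{r}\}$ is locally uniformly bounded on compact subsets of $\overline{\Omega}$; Arzel\`a--Ascoli together with elliptic regularity then extracts a subsequence converging in $C^{2}_{\mathrm{loc}}(\overline{\Omega})$ to a positive smooth function $h$, harmonic in $\Omega$, identically $1$ on $\partial K$, with outward flux $-1$ across $\partial K$, and in particular non-constant.

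\textbf{Proof of (\ref{vergleichminicapah}).} Apply the Green formula (\ref{intbypartsdelta}) to the pair $(h,\varphi_{r})$ in $B_{r}\setminus K$. Using $\Delta_{\mu}h=\Delta_{\mu}\varphi_{r}=0$, the boundary values $h=1$ on $\partial K$, $\varphi_{r}=1$ on $\partial K$, $\varphi_{r}=0$ on $\partial B_{r}$, and the flux identities $\int_{\partial K}\partial h/\partial\nu\,d\mu'=-1$ and $\int_{\partial K}\partial\varphi_{r}/\partial\nu\,d\mu'=\mathrm{cap}(K,B_{r})$ (with $\nu$ the outward normal of $\overline{\Omega}$), one equates the two standard expressions for $\int\nabla h\cdot\nabla\varphi_{r}\,d\mu$ to obtain
\[
\int_{\partial B_{r}}h\,\bigl(-\tfrac{\partial\varphi_{r}}{\partial\nu}\bigr)\,d\mu'=\mathrm{cap}(K,B_{r})+1,
\]
where now $\nu$ is the outward normal of $B_{r}$. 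Since $-\partial\varphi_{r}/\partial\nu\geq 0$ on $\partial B_{r}$ with total integral equal to $\mathrm{cap}(K,B_{r})$ by (\ref{fluxvscap}), replacing $h$ under the integral by $\min_{\partial B_{r}}h$ yields
\[
\min_{\partial B_{r}}h\cdot\mathrm{cap}(K,B_{r})\leq\mathrm{cap}(K,B_{r})+1,
\]
whence $\min_{\partial B_{r}}h\leq C\,\mathrm{cap}(K,B_{r})^{-1}$ for $C:=1+\mathrm{cap}(K,B_{r_{0}+1})$, finite because $r\mapsto\mathrm{cap}(K,B_{r})$ is monotone non-increasing.

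\textbf{Non-parabolicity and main obstacle.} Set $u:=1/h$, a positive, smooth, non-constant function on $\overline{\Omega}$. Lemma~\ref{neuesmasdelta} gives $\Delta_{\widetilde{\mu}}u=h^{-1}\Delta_{\mu}(h\cdot h^{-1})=h^{-1}\Delta_{\mu}(1)=0$ in the interior. Since $h$ attains its minimum $1$ on $\partial K$ and is non-constant, the strong maximum principle gives $h>1$ in $\Omega$, and Hopf's boundary lemma yields $\partial h/\partial\nu<0$ on $\partial K$ (with $\nu$ outward from $\overline{\Omega}$); hence $\partial u/\partial\nu=-h^{-2}\partial h/\partial\nu>0$ on $\partial K$, and $u$ is a positive non-constant $\widetilde{\mu}$-superharmonic function on $\overline{\Omega}$, which certifies non-parabolicity. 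The main obstacle in the proof is the locally uniform boundedness of $\{h_{r}\}$ needed to extract the limit: although $\min_{\partial B_{r}}h_{r}$ itself diverges at rate $\mathrm{cap}(K,B_{r})^{-1}$ in the parabolic case, the fixed Dirichlet value and the fixed flux on $\partial K$, combined with Harnack chains away from $\partial B_{r}$, must be shown to prevent interior blow-up on any compact subset of $\overline{\Omega}$.
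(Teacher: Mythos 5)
Your construction and overall strategy match the paper's: build $h$ as a locally uniform limit of suitably normalized equilibrium potentials, verify the capacity estimate, and certify non-parabolicity by showing $1/h$ is a positive, non-constant $\widetilde{\mu}$-superharmonic function. Your non-parabolicity argument is essentially identical to the paper's. But there is a genuine gap, and ironically you already have the tool to close it.

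The gap is precisely what you flag at the end: you never establish that $\{h_r\}$ (equivalently $v_R:=(1-\varphi_R)/\operatorname{cap}(K,B_R)$) is locally uniformly bounded, and without it the extraction of a limit $h$ is unjustified. Harnack's inequality applied to $h_r-1\geq 0$ gives a multiplicative comparability of sup and inf on compacta, but it cannot bound the function unless you already have a uniform bound on the value at one point; the Dirichlet condition $h_r\equiv 1$ on $\partial K$ does not supply that seed, because a positive harmonic function vanishing on $\partial K$ can be as large as you like nearby. The paper obtains the seed by proving $\min_{\partial B_r}v_R\leq\operatorname{cap}(K,B_r)^{-1}$ for $R>r>r_0$, and then combining the maximum principle (which forces $\sup_{B_r\setminus K}v_R=\max_{\partial B_r}v_R$) with Harnack chaining along $\partial B_r$ to get a bound $C(r)\operatorname{cap}(K,B_r)^{-1}$ uniform in $R$. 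Only then is the diagonal limit available. Your version places the Green-formula estimate after the limit, which is logically backwards: the estimate for the prelimit functions is what licenses the limit.

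However, your Green's reciprocity computation does the job if you apply it to $v_R$ instead of to $h$. Working in $B_r\setminus K$ with $R>r>r_0$ and using $v_R=0$ on $\partial K$, $\varphi_r=1$ on $\partial K$, $\varphi_r=0$ on $\partial B_r$, and $\operatorname{flux}(v_R)=1$, Green's identity gives $\int_{\partial B_r}v_R\bigl(-\partial\varphi_r/\partial n\bigr)\,d\mu'=1$, and since $-\partial\varphi_r/\partial n\geq 0$ integrates to $\operatorname{cap}(K,B_r)$, you get $\min_{\partial B_r}v_R\leq\operatorname{cap}(K,B_r)^{-1}$ directly. This is a clean alternative to the paper's level-set/comparison argument and immediately feeds into the max-principle plus Harnack step to give uniform boundedness. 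Once $h=1+\lim v_{R_k}$ exists and the flux is seen to be preserved under the $C^1_{\mathrm{loc}}$ convergence, the estimate $\min_{\partial B_r}h\leq 1+\operatorname{cap}(K,B_r)^{-1}$ follows either from the $v_R$-estimate in the limit, or from your reciprocity computation applied to $h$. Finally, a small arithmetic slip: since $r\mapsto\operatorname{cap}(K,B_r)$ is non-increasing, the sharp constant is $C=1+\operatorname{cap}(K,B_{r_0})$; your $C=1+\operatorname{cap}(K,B_{r_0+1})$ is too small and fails at $r=r_0$.
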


\begin{proof}
For any $R>r_{0}$, let $\varphi_{R}$ be the equilibrium potential of the capacitor $(K, B_{R})$. It follows from (\ref{fluxvscap}), that \begin{equation}\label{capgleichminsfluxK}\textnormal{cap}(K, B_{R})=-\textnormal{flux}(\varphi_{R}).\end{equation} By our assumption on $K$, we have for all $R> r_{0}$, \begin{equation*}\label{assumptionboundary}\textnormal{cap}(K, B_{R})>0,\end{equation*} whence we can consider the sequence $$v_{R}=\frac{1-\varphi_{R}}{\textnormal{cap}(K, B_{R})}.$$ By (\ref{capgleichminsfluxK}) this sequence satisfies \begin{equation}\label{fluxvReins}\textnormal{flux}(v_{R})=1.\end{equation} Let us extend all $v_{R}$ to $K$ by setting $v_{R}\equiv 0$ on $K$. We claim that for all $R>r>r_{0}$, \begin{equation}\label{vergleichminicapav}\min_{\partial B_{r}}v_{R}\leq \textnormal{cap}(K, B_{r})^{-1}.\end{equation} For $R>r>r_{0}$, denote $m_{r}=\min_{\partial B_{r}}v_{R}$. It follows from the minimum principle and the fact that $v_{R}\equiv 0$ on $K$, that the set $$U_{r}:=\{x\in M:v_{R}(x)<m_{r}\}$$ is inside $B_{r}$ and contains $K$. Then observe that the function $1-\frac{v_{R}}{m_{r}}$ is the equilibrium potential for the capacitor $(K, U_{r})$, whence $$\textnormal{cap}(K, B_{r})\leq \textnormal{cap}(K, U_{r})=\textnormal{flux}\left(\frac{v_{R}}{m_{r}}\right)=\frac{1}{m_{r}},$$ which proves (\ref{vergleichminicapav}).
Note that, since $v_{R}$ vanishes on $\partial \Omega$, the maximum principle implies that for all $R>r>r_{0}$, \begin{equation}\label{infinoderrand}\sup_{B_{r}\setminus K}v_{R}=\max_{\partial B_{r}}v_{R}.\end{equation} Hence, we obtain from (\ref{infinoderrand}), the local elliptic Harnack inequality and (\ref{vergleichminicapav}), that for every $R>r>r_{0}$, \begin{equation}\label{uniformboundvR}\sup_{B_{r}\setminus K}v_{R}\leq C(r)\min_{\partial B_{r}}v_{R}\leq C(r)\textnormal{cap}(K, B_{r})^{-1},\end{equation} where the constant $C(r)$ depends only on $r$. Thus, the bound in (\ref{uniformboundvR}) is uniform in $R$, when $R\gg r$, so that in this case, the sequence $v_{R}$ is uniformly bounded in $B_{r}\setminus K$. Now define $V_{k}:=B_{r_{k}}\setminus K$, where $B_{r_{k}}$ are open balls of radius $r_{k}\geq r_{0}$ with $\lim_{k\to \infty}r_{k}=+\infty$ so that $\{V_{k}\}_{k}$ gives a sequence of precompact open sets that covers $\Omega$.
By using a diagonal process, we obtain a subsequence $v_{R_{k}}$ of $v_{R}$ that converges in all $V_{k}$, and hence, in $\Omega$. In addition, the limit $v:=\lim_{k\to \infty}v_{R_{k}}$ is a harmonic function in $\Omega$.
Since \begin{equation}\label{vRundvgleichinBR}v=v_{R} \quad\textnormal{in}~B_{R}\setminus K,\end{equation} we have $$\int_{\partial B_{r}}{\frac{\partial v}{\partial \nu}d\mu'}=\int_{\partial B_{r}}{\frac{\partial v_{R}}{\partial \nu}d\mu'},$$ which together with (\ref{fluxvReins}) implies \begin{equation}\label{fluxvonhimbeweis} \textnormal{flux}(v)=1,\end{equation} whence $v$ is non-constant. Furthermore, $v$ is non-zero and by (\ref{vRundvgleichinBR}), non-negative in $\overline{\Omega}$. Let us define the function $h=1+v$ in $\overline{\Omega}$ so that $h$ is positive and smooth in $\overline{\Omega}$. Also, it follows from (\ref{vergleichminicapav}), that for all $r>r_{0}$, $$\min_{\partial B_{r}}h\leq 1+\textnormal{cap}(K, B_{r})^{-1}\leq (1+\textnormal{cap}(K, B_{r_{0}}))\textnormal{cap}(K, B_{r})^{-1},$$ which proves (\ref{vergleichminicapah}) with $C=1+\textnormal{cap}(K, B_{r_{0}})$. 
        
Let us now show that the weighted manifold $(\overline{\Omega}, \widetilde{\mu})$ is non-parabolic. For that purpose, consider in $\overline{\Omega}$ the positive smooth function $w=\frac{1}{h}$. Then we have by Lemma \ref{neuesmasdelta}, that function $w$ satisfies in $\Omega$, $$\Delta_{\widetilde{\mu}}(w)=\Delta_{\widetilde{\mu}}\left(\frac{1}{h}\right)=\frac{1}{h}\Delta_{\mu}1=0.$$ so that the function $w$ is $\Delta_{\widetilde{\mu}}$-harmonic in $\Omega$. Observe that \begin{equation}\label{ableitungvonw}\frac{\partial w}{\partial \nu}=-\frac{\partial h}{\partial \nu}{\frac{1}{h^{2}}},\end{equation} where $\nu$ denotes the outward normal unit vector field on $\partial \Omega$. Since $v$ is non-negative in $\Omega$ and $v=0$ on $\partial \Omega$, we have $\frac{\partial h}{\partial \nu}\leq0$ on $\partial \Omega$, whence we get by (\ref{ableitungvonw}), $$\frac{\partial w}{\partial \nu}\geq0\quad \textnormal{on}~ \partial \Omega.$$ Hence, we conclude that $w$ is $\Delta_{\widetilde{\mu}}$-superharmonic in $\overline{\Omega}$, positive and non-constant, which implies that $(\overline{\Omega}, \widetilde{\mu})$ is non-parabolic.
\end{proof}

\begin{rem}\normalfont
Note that the function $h$ constructed in Lemma \ref{Lemmaconstructionh} is $\Delta_{\mu}$-subharmonic in $\overline{\Omega}$. If we assume that the weighted manifold $(\overline{\Omega}, \mu)$ is parabolic, we obtain that $h$ is unbounded since a non-constant bounded subharmonic function can only exist on non-parabolic manifolds.
\end{rem}

\subsection{Locally Harnack case}\label{positiveresultslower}

\begin{defin}\label{deflocallyharnack}\normalfont
The weighted manifold $(M, \mu)$ is said to be a \textit{locally Harnack manifold} if there is $\rho>0$, called the \textit{Harnack radius}, such that for any point $x\in M$ the following is true:
\begin{enumerate}
                \item for any positive numbers $r<R<\rho$ \begin{equation}\label{locHarvD} \frac{V(x, R)}{V(x, r)}\leq a \left(\frac{R}{r}\right)^{n}.\end{equation}
                \item Poincaré inequality: for any Lipschitz function $f$ in the ball $B(x, R)$ of a radius $R<\rho$ we have \begin{equation}\label{zwopoincare}\int_{B(x, R)}{|\nabla u|^{2}d\mu}\geq\frac{b}{R^{2}}\int_{B(x, R/2)}{(f-\overline{f})^{2}d\mu},\end{equation} where we denote $$\overline{f}:=\dashint_{B(x, R/2)}{f}d\mu:=\frac{1}{V(x, R/2)}\int_{B(x, R/2)}{f}d\mu$$
\end{enumerate}
and $a, b$ and $n$ are positive constants and $V(x, r)$ denotes the volume function of $(M, \mu)$.
\end{defin}

For example, the conditions 1. and 2. are true in the case when the manifold $M$ has Ricci curvature bounded below by a (negative) constant $-K$ (see \cite{Buser1982}). Hence, for example, any manifold $M$ of \textit{bounded geometry} is a locally Harnack manifold.

\begin{lemma}[\cite{grigor1994heat}, Theorem 2.1]\label{thmlocallyharnack}
Let $(M, \mu)$ be a locally Harnack manifold. Then we have, for any precompact open set $U\subset M$, \begin{equation}\label{lowerbdlambdaone}\lambda_{1}(U)\geq \frac{c}{\rho^{2}}\min\left(\left(\frac{V_{0}}{\mu(U)}\right)^{2}, \left(\frac{V_{0}}{\mu(U)}\right)^{2/n}\right),\end{equation} where $$V_{0}=\inf_{x\in M}\{V(x, \rho):B(x, \rho)\cap U\ne \emptyset\}$$ and the constant $c$ depends on $a, b, n$ from (\ref{locHarvD}) and (\ref{zwopoincare}).
\end{lemma}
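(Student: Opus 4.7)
The plan is to combine Cheeger's inequality for the Dirichlet Laplacian, $\lambda_{1}(U) \geq h(U)^{2}/4$, with a local isoperimetric inequality on locally Harnack manifolds. The two-branch structure of the minimum in (\ref{lowerbdlambdaone}) reflects the two regimes of the isoperimetric profile at the scale $\rho$: a dimensional $(n-1)/n$ exponent for sets smaller than a single Harnack ball, and a bounded-below perimeter for sets exceeding it.

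First I would prove the following local isoperimetric estimate on $(M, \mu)$: for every precompact open set $A$ with smooth boundary,
$$\mu^{+}(A) \;\geq\; \frac{c}{\rho}\,\min\!\left(V_{0}(A)^{1/n}\,\mu(A)^{(n-1)/n},\; V_{0}(A)\right),$$
where $V_{0}(A) := \inf\{V(x,\rho) : B(x,\rho)\cap A \neq \emptyset\}$. This is the "bi-regime" isoperimetric inequality produced by combining the $L^{1}$-Poincaré inequality (which follows from (\ref{zwopoincare}) by the Buser/Maz'ya truncation trick) with volume doubling (\ref{locHarvD}): the first branch is the Euclidean-type isoperimetric inequality inside a single Harnack ball, and the second comes from the fact that, once $\mu(A)$ exceeds the volume of a single ball, any piece of $A$ must contribute perimeter $\gtrsim V_{0}/\rho$ via a covering/Vitali argument.

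Finally I would apply this inequality to every open $A$ with $\overline{A} \subset U$. Since any ball meeting $A$ also meets $U$, we have $V_{0}(A) \geq V_{0}$ with the $V_{0}$ from the statement, so dividing through by $\mu(A)$ yields
$$\frac{\mu^{+}(A)}{\mu(A)} \;\geq\; \frac{c}{\rho}\,\min\!\left(\left(\frac{V_{0}}{\mu(A)}\right)^{1/n},\; \frac{V_{0}}{\mu(A)}\right).$$
The right-hand side is a decreasing function of $\mu(A)$, so taking the infimum over $A \subset\subset U$ (where $\mu(A) \leq \mu(U)$) gives $h(U) \geq (c/\rho)\min\bigl((V_{0}/\mu(U))^{1/n},\, V_{0}/\mu(U)\bigr)$; Cheeger's inequality then produces (\ref{lowerbdlambdaone}) upon squaring. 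The principal obstacle I anticipate is the large-volume branch of the isoperimetric estimate in the first step: while the standard Saloff--Coste/Buser arguments readily give the dimensional branch inside a single Harnack ball, extending the estimate globally so that the constant $V_{0}$ respects its definition as an infimum of ball volumes meeting $A$ requires a careful covering argument that merges local perimeter estimates across overlapping balls.
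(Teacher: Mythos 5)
The paper only \emph{cites} this statement from \cite{grigor1994heat} and never reproduces its proof, so there is nothing here to compare against line by line; I will assess your outline on its own terms.

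Your plan has a genuine gap in the very first step. The bi-regime isoperimetric inequality you want to establish requires (essentially) an $L^{1}$-Poincar\'e inequality at scale $\rho$. But the definition of a locally Harnack manifold provides only the $L^{2}$-Poincar\'e inequality (\ref{zwopoincare}) and volume doubling (\ref{locHarvD}). The Maz'ya truncation argument you invoke runs in the opposite direction: it upgrades a $(1,1)$-Poincar\'e (or isoperimetric) inequality to $(p,p)$ inequalities for $p>1$, never the other way around. Buser's theorem does produce an $L^{1}$-Poincar\'e inequality, but it requires a Ricci curvature lower bound, which is not part of Definition~\ref{deflocallyharnack}. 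On general doubling spaces an $L^{2}$-Poincar\'e inequality does \emph{not} imply an $L^{1}$-Poincar\'e inequality, so your intermediate isoperimetric estimate is simply not available from the stated hypotheses. With it goes the lower bound on the Cheeger constant; and since Cheeger's inequality $\lambda_{1}\geq h^{2}/4$ only converts Cheeger bounds into eigenvalue bounds (the reverse, Buser's inequality, again needs curvature), there is no way to salvage the Cheeger route within the given framework.

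The standard proof of this type of Faber--Krahn estimate avoids perimeter altogether and works directly from the $L^{2}$-Poincar\'e inequality. One covers $U$ by balls $B(x_{i},R/2)$ of a suitable radius $R\leq\rho$ with bounded overlap, applies (\ref{zwopoincare}) in each ball, and controls the averages $\overline{u}_{i}$ via Cauchy--Schwarz together with the smallness of $\mu(U\cap B_{i})/V(x_{i},R)$; volume doubling lets one shrink $R$ until this ratio is below a fixed threshold, producing the $(V_{0}/\mu(U))^{2/n}$ branch when $\mu(U)$ is small compared to $V_{0}$, while the complementary regime produces the $(V_{0}/\mu(U))^{2}$ branch at scale $R=\rho$. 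This is the route taken in \cite{grigor1994heat}, and it never requires an isoperimetric inequality.
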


\begin{defin}\normalfont
We say that a manifold $M$ satisfies the \textit{spherical Harnack inequality} if there exist $x_{0}\in M$ and constants $r_{0}>0$, $C_{H}>0$, $N_{H}>0$ and $A>1$, so that for any positive harmonic function $u$ in $M\setminus \overline{B(x_{0}, A^{-1}r)}$ with $r\geq r_{0}$, \begin{equation}\label{annuliharnack2}\sup_{\partial B(x_{0}, r)}u\leq C_{H}r^{N_{H}} \inf_{\partial B(x_{0}, r)}u.\end{equation}
\end{defin}

\textbf{Assumption:} In this section, when considering an end $\Omega$ of a complete non-compact weighted manifold $(M, \mu)$, we always assume that there exists a complete weighted manifold $(M_{0}, \mu_{0})$ and a compact set $K_{0}\subset M_{0}$ that is the closure of a non-empty open set, such that $\Omega$ is isometric to $M_{0}\setminus K_{0}$ in the sense of weighted manifolds. For simplicity and since we only use the intrinsic geometry of $M_{0}$, we denote by $B(x, r)$ the geodesic balls in $M_{0}$ and by $V(x, r)$ the volume function of $M_{0}$. 

\begin{thm}\label{thmlocharsph}
Let $\Omega$ be an end of a complete non-compact weighted manifold $(M, \mu)$. Assume that 
$M_{0}$ is a locally Harnack manifold with Harnack radius $\rho>0$, where $M_{0}$ is defined as above, and that there exists $x_{0}\in M_{0}$ so that
\begin{itemize}
                \item $M_{0}$ satisfies the spherical Harnack inequality (\ref{annuliharnack2}).
                \item $M_{0}$ satisfies the polynomial volume growth condition (\ref{PolygrowthOM}).
                \item There are constants $v_{0}>0$ and $\theta\geq 0$ so that for any $x\in M_{0}$, if $d(x, x_{0})\leq R$ for some $R>\rho$, it holds that \begin{equation}\label{upperboundVOm}V(x, \rho)\geq v_{0}R^{-\theta}.\end{equation}
\end{itemize}
Then, for any $x\in M$, there exist $\alpha>0$, $t_{x}>0$ and $c_{x}>0$ such that for all $t\geq t_{x}$, \begin{equation}\label{lowerbndendparamitharnaanuohneomloc}p_{t}(x, x)\geq \frac{c_{x}}{t^{\alpha}},\end{equation} where $\alpha=\alpha(N, \theta, n, N_{H})$ and $n$ is as in (\ref{locHarvD}).
\end{thm}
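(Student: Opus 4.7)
The overall strategy is an $h$-transform reduction to the non-parabolic setting covered by Theorem \ref{thmlowerptbd}. Since $\Omega$ is isometric to $M_{0}\setminus K_{0}$ as weighted manifolds, I work intrinsically in $M_{0}$. Applying Lemma \ref{Lemmaconstructionh} with $K=K_{0}$ produces a positive smooth function $h$ on $\overline{\Omega}$ which is harmonic in $\Omega$, satisfies the capacity bound (\ref{vergleichminicapah}), and makes the weighted manifold $(\overline{\Omega},\widetilde{\mu})$ with $d\widetilde{\mu}=h^{2}d\mu$ non-parabolic. The plan is then to apply Theorem \ref{thmlowerptbd} to $(\overline{\Omega},\widetilde{\mu})$, convert the bound on $\widetilde{p}_{t}^{\Omega}$ back to a bound on $p_{t}^{\Omega}$ via Lemma \ref{relheattildeohne}, and use $p_{t}\geq p_{t}^{\Omega}$.

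The heart of the argument is to show that $h$ grows at most polynomially, i.e.\ $h(x)\leq C(1+d(x_{0},x))^{\gamma}$ for some $\gamma=\gamma(N,\theta,n,N_{H})$. First I want a polynomial lower bound on capacity of the form $\textnormal{cap}(K_{0},B(x_{0},r))\geq cr^{-\beta}$ for $r\geq r_{0}$, with $\beta=\beta(N,\theta,n)$. The ingredients here are the Faber-Krahn type estimate of Lemma \ref{thmlocallyharnack}, the local Poincar\'e inequality (\ref{zwopoincare}), and crucially the lower volume bound (\ref{upperboundVOm}) on small balls. Concretely, one chains a sequence of $\rho$-balls along a geodesic from $K_{0}$ to $\partial B(x_{0},r)$ and controls the Dirichlet energy of the equilibrium potential of $(K_{0},B(x_{0},r))$ through a telescoping/resistance-type argument in which (\ref{upperboundVOm}) keeps the local volume factors from collapsing. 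Substituting the resulting lower bound on $\textnormal{cap}$ into (\ref{vergleichminicapah}) yields $\min_{\partial B(x_{0},r)}h\leq Cr^{\beta}$. The spherical Harnack inequality (\ref{annuliharnack2}) then upgrades this to $\sup_{\partial B(x_{0},r)}h\leq C'r^{N_{H}+\beta}$, and finally the maximum principle applied to the harmonic $h$ propagates the spherical bound to the desired pointwise bound on all of $\overline{\Omega}$, with $\gamma=N_{H}+\beta$.

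Once $h$ is polynomially bounded, the polynomial volume growth (\ref{PolygrowthOM}) of $M_{0}$ immediately gives polynomial growth of the weighted volume, $\widetilde{\mu}(B_{\Omega}(x_{0},r))\leq Cr^{N+2\gamma}=:Cr^{N'}$, with $N'>2$. Since $(\overline{\Omega},\widetilde{\mu})$ is non-parabolic, Theorem \ref{thmlowerptbd} yields $\widetilde{p}_{t}^{\Omega}(x,x)\geq \widetilde{c}_{x}/(t\log t)^{N'/2}$ for $x\in\Omega$ and $t\geq t_{x}$. By Lemma \ref{relheattildeohne}, $p_{t}^{\Omega}(x,x)=h^{2}(x)\widetilde{p}_{t}^{\Omega}(x,x)\geq c_{x}/t^{\alpha}$ for any fixed $\alpha>N'/2$ and all $t$ large, and the comparison $p_{t}\geq p_{t}^{\Omega}$ gives (\ref{lowerbndendparamitharnaanuohneomloc}) for $x\in\Omega$. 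To extend the bound to $x\in M\setminus\Omega$, I fix $y_{0}\in\Omega$ and use the semigroup identity $p_{3t}(x,x)=\iint p_{t}(x,z_{1})p_{t}(z_{1},z_{2})p_{t}(z_{2},x)\,d\mu(z_{1})d\mu(z_{2})$, restricted to $z_{1},z_{2}\in B(y_{0},1)$, combined with the local parabolic Harnack inequality, to transfer the known lower bound at $y_{0}$ to one at $x$ at the cost of a multiplicative constant depending on $x$.

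The main obstacle is the polynomial lower bound on capacity in Step 2. Since $(\overline{\Omega},\mu)$ is allowed to be parabolic, $\textnormal{cap}(K_{0},B(x_{0},r))$ tends to zero as $r\to\infty$, so what is needed is not mere positivity but a quantitative polynomial rate of decay. Weaving together all three local assumptions on $M_{0}$ -- locally Harnack, spherical Harnack, and the non-degeneracy (\ref{upperboundVOm}) -- is precisely what is required to produce this rate, which is why these hypotheses, absent from Theorem \ref{thmlowerptbd}, appear in the present theorem, and which also traces out the dependence $\alpha=\alpha(N,\theta,n,N_{H})$.
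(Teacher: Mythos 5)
Your overall architecture mirrors the paper's exactly: apply Lemma~\ref{Lemmaconstructionh} to produce the harmonic $h$ and the non-parabolic weighted manifold $(\overline{\Omega},\widetilde{\mu})$, show $h$ (hence the weighted volume) grows at most polynomially, apply Theorem~\ref{thmlowerptbd}, convert back via Lemma~\ref{relheattildeohne}, and spread the bound to all of $M$ via the comparison $p_t^\Omega\leq p_t$ and a local parabolic Harnack argument. The only place you genuinely diverge is the crucial quantitative step, and there your sketch has a gap.

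You correctly identify that the crux is a polynomial lower bound on $\textnormal{cap}(K_{0},B(x_{0},r))$, but the ``telescoping/resistance-type argument chaining $\rho$-balls along a geodesic'' is not going to produce it. Chaining and shell decompositions of the capacitor are series arguments; by Rayleigh monotonicity / the slicing inequality $\textnormal{cap}(K,\Omega)^{-1}\geq\sum_{i}\textnormal{cap}(A_{i-1},A_{i})^{-1}$, such decompositions yield \emph{upper} bounds on capacity (lower bounds on resistance), which is the wrong direction here. Likewise, restricting the equilibrium problem to a tube along a geodesic gives $\textnormal{cap}(K_{0},\text{tube})\geq\textnormal{cap}(K_{0},B_{r})$, so lower bounds on the tube capacity say nothing about the capacity you need. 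Harnack chaining of $\rho$-balls is the right tool for the spherical Harnack inequality (and it is used that way in Corollaries~\ref{thmlowerptbdgeo} and~\ref{thmlowerptbdgeo2}), but it does not by itself control a Dirichlet integral from below.

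The paper's route around this is short and worth recording: it never bounds the capacity from below directly. Instead it observes that the equilibrium potential $\varphi_{r}$ of $(K_{0},B_{r})$ is an admissible test function for $\lambda_{1}(B_{r})$ with $\int\varphi_{r}^{2}\,d\mu_{0}\geq\mu_{0}(K_{0})$ (since $\varphi_{r}\equiv 1$ on $K_{0}$), giving
\[
\lambda_{1}(B_{r})\ \leq\ \frac{\int_{B_{r}}|\nabla\varphi_{r}|^{2}\,d\mu_{0}}{\int_{B_{r}}\varphi_{r}^{2}\,d\mu_{0}}\ \leq\ \frac{\textnormal{cap}(K_{0},B_{r})}{\mu_{0}(K_{0})},
\]
hence $\textnormal{cap}(K_{0},B_{r})\geq\mu_{0}(K_{0})\,\lambda_{1}(B_{r})$. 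The polynomial lower bound on $\lambda_{1}(B_{r})$ then comes directly from Lemma~\ref{thmlocallyharnack} together with (\ref{upperboundVOm}) (to bound $V_{0}$ below) and the polynomial volume growth (to bound $V(r)$ above), giving $\lambda_{1}(B_{r})\geq C_{1}r^{-\beta}$ with $\beta=2\max\left(N+\theta,\frac{N+\theta}{n}\right)$. Substituting this into (\ref{vergleichminicapah}) and using the spherical Harnack inequality yields $\max_{\partial B_{r}}h\leq C_{2}r^{\beta+N_{H}}$, after which your remaining steps (maximum principle, weighted volume bound, Theorem~\ref{thmlowerptbd}, $h$-transform back, extension to all of $M$) go through exactly as you describe. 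So: replace the chaining sketch by the Rayleigh-quotient trick and the proposal becomes the paper's proof.
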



\begin{proof}
Let us set $B_{r}=B(x_{0}, r)$ and $V(r)=V(x_{0}, r)$ and $K_{0}$ be contained in a ball $B_{\delta}$ for some $\delta>0$. It follows from ([\cite{heinonen2018nonlinear}, Theorem 2.25]) that $K_{0}$ has locally positive capacity. Then by Lemma \ref{Lemmaconstructionh} there exists a positive smooth function $h$ in $\overline{\Omega}$ that is harmonic in $\Omega$ and such that the weighted manifold $\left(\overline{\Omega}, \widetilde{\mu}\right)$ is non-parabolic, where measure $\widetilde{\mu}$ is defined by (\ref{defvonmeasuremith}). Now, our aim is to apply the estimate (\ref{lowerbndend}) in Theorem \ref{thmlowerptbd} to the weighted manifold $(\overline{\Omega}, \widetilde{\mu})$. For that purpose, it is sufficient to show that there are positive constants $\widetilde{r_{0}}, \widetilde{C}$ and $\widetilde{N}>2$ such that for all $r\geq \widetilde{r_{0}}$, \begin{equation}\label{Volumegrowththeotilde2}\widetilde{V}_{\Omega}(r)=\int_{B_{r}\cap \Omega}{h^{2}d\mu}\leq \widetilde{C}r^{\widetilde{N}}.\end{equation}
Firstly, by (\ref{vergleichminicapah}), there is a constant $C_{\delta}>0$ such that for all $r\geq \delta$, 
\begin{equation}\label{kettemitcapunmax2l}\min_{\partial B_{r}}h\leq C_{\delta}\textnormal{cap}(K_{0}, B_{r})^{-1}.\end{equation}
As $h$ is harmonic in $M_{0}\setminus \overline{B_{\delta}}$, the hypothesis (\ref{annuliharnack2}) implies that there exists a constant $C_{H}>0$, so that for every $r\geq \max(r_{0}, A\delta)$, $$\max_{\partial B_{r}}h\leq C_{H}r^{N_{H}}\min_{\partial B_{r}}h.$$
Combining this with (\ref{kettemitcapunmax2l}), we obtain for all $r\geq \max(r_{0}, A\delta)$ with $C_{0}=C_{H}C_{\delta}$, \begin{equation}\label{Folgerungfuerh2l}\max_{\partial B_{r}}h\leq C_{0}r^{N_{H}}\textnormal{cap}(K_{0}, B_{r})^{-1}.\end{equation}
For any $r\geq\delta$, let $\varphi_{r}$ be the equilibrium potential of the capacitor $(K_{0}, B_{r})$. Since $$\int_{B_{r}}{|\nabla \varphi_{r}|^{2}d\mu_{0}}=\textnormal{cap}(K_{0}, B_{r})$$ and $$\int_{B_{r}}{\varphi_{r}^{2}d\mu_{0}}\geq \mu_{0}(K_{0}),$$ we obtain $$\lambda_{1}(B_{r})\leq \frac{\int_{B_{r}}{|\nabla \varphi_{r}|^{2}d\mu_{0}}}{\int_{B_{r}}{\varphi_{r}^{2}d\mu_{0}}}\leq\frac{\textnormal{cap}(K_{0}, B_{r})}{\mu(K_{0})},$$ whence, together with (\ref{Folgerungfuerh2l}), we deduce  \begin{equation}\label{vergleichcapla1}\max_{\partial B_{r}}h\leq C_{0}\mu(K_{0})r^{N_{H}}\lambda_{1}(B_{r})^{-1}.\end{equation} 
Since $M_{0}$ is a locally Harnack manifold, we can apply Lemma \ref{thmlocallyharnack} and obtain from (\ref{lowerbdlambdaone}), that for all $r\geq \delta$, \begin{equation}\label{anwendunglowebdlambdae}\lambda_{1}(B_{r})\geq \frac{c}{\rho^{2}}\min\left(\left(\frac{V_{0}}{V(r)}\right)^{2}, \left(\frac{V_{0}}{V(r)}\right)^{2/n}\right),\end{equation} where $$V_{0}=\inf_{x\in M_{0}}\{V(x, \rho):B(x, \rho)\cap B_{r}\ne \emptyset\}.$$
Note that the condition $B(x, \rho)\cap B_{r}\ne \emptyset$ implies that $d(x_{0}, x)\leq r+\rho$. Thus, we obtain from the hypothesis (\ref{upperboundVOm}), assuming $r\geq\rho$, $$V(x, \rho)\geq v_{0}(r+\rho)^{-\theta}\geq v_{0}2^{-\theta}r^{-\theta}.$$
Therefore, we have for all $r\geq \rho$, $$V_{0}\geq C_{\theta}r^{-\theta},$$ with $C_{\theta}=v_{0}2^{-\theta}$. Hence, using the polynomial volume growth condition (\ref{PolygrowthOM}), we obtain from (\ref{anwendunglowebdlambdae}), that for all $r\geq\max(r_{0}, \rho, A\delta)$, $$\lambda_{1}(B_{r})\geq C_{1}\min \left(r^{-2(N+\theta)}, r^{-2(N+\theta)/n}\right),$$ where $$C_{1}=\frac{c}{\rho^{2}}\min\left(\left(\frac{C_{\theta}}{C}\right)^{2}, \left(\frac{C_{\theta}}{C}\right)^{2/n}\right),$$ so that by setting \begin{equation}\label{defvonbetfornh}\beta=2\max\left(N+\theta, \frac{N+\theta}{n}\right),\end{equation} we deduce for $r\geq \max(r_{0}, \rho, A\delta, 1)$, $$\lambda_{1}(B_{r})\geq C_{1}r^{-\beta}.$$
Combining this with (\ref{vergleichcapla1}), we obtain for every $r\geq \max(r_{0}, \rho, A\delta, 1)$, \begin{equation}\label{fuervolgrowth2l}\max_{\partial B_{r}}h\leq C_{2}r^{\beta+N_{H}},\end{equation} where $$C_{2}=C_{0}C_{1}^{-1}\mu_{0}(K_{0})^{-1}.$$
Hence, (\ref{fuervolgrowth2l}), the polynomial volume growth condition (\ref{PolygrowthOM}) and the maximum principle imply that for all $r\geq \max(r_{0}, \rho, A\delta, 1)$, $$\widetilde{V}_{\Omega}(r)=\int_{B_{r}\cap \Omega}{h^{2}d\mu}\leq V(r)\max_{\partial B_{r}}h^{2}\leq C_{2}^{2}Cr^{N+2(\beta+N_{H})},$$ which proves (\ref{Volumegrowththeotilde2}) with $\widetilde{r}_{0}=\max(r_{0}, \rho, A\delta, 1)$, $\widetilde{N}=2(\beta+N_{H})+N$ and $\widetilde{C}=C_{2}^{2}C,$ and implies that the weighted manifold $(\Omega, \widetilde{\mu})$ has a polynomial volume growth.
Thus, the hypotheses of Theorem \ref{thmlowerptbd} are fulfilled and we obtain by (\ref{lowerbndend}), that for any $x\in \Omega$, there exist $\widetilde{t}_{x}>0$ and $\widetilde{c}_{x}>0$, such that for all $t\geq \widetilde{t}_{x}$, $$\widetilde{p}_{t}^{\Omega}(x, x)\geq \frac{\widetilde{c}_{x}}{(t\log t)^{\beta+N_{H}+N/2}},$$ where $\beta$ is defined by (\ref{defvonbetfornh}). Since $h$ is harmonic in $\Omega$, we therefore conclude by (\ref{relhtransformheat}) that for any $x\in \Omega$ and all $t\geq \widetilde{t}_{x}$, $$p^{\Omega}_{t}(x, x)=h^{2}(x)\widetilde{p}^{\Omega}_{t}(x, x)\geq \frac{\widetilde{c}_{x}h^{2}(x)}{(t\log t)^{\beta+N_{H}+N/2}},$$ which yields (\ref{lowerbndendparamitharnaanuohneomloc}) for all $x\in M$ by using $p_{t}^{\Omega}\leq p_{t}$ and by means of the local parabolic Harnack inequality.
\end{proof}

\begin{rem}\normalfont
Note that it follows from the non-parabolicity of $\left(\overline{\Omega}, \widetilde{\mu}\right)$, that $4\max\left(N+\theta, \frac{N+\theta}{n}\right)+2N_{H}+N>2$.
\end{rem}

\subsection{End with relatively connected annuli}

\begin{defin}\normalfont We say that a manifold $M$ with fixed point $x_{0}\in M$ satisfies \textit{the relatively connected annuli condition (RCA)} if there exists $A>1$ such that, for any $r>A^{2}$ and all $x, y$ with $d(x_{0}, x)=d(x_{0}, y)=r$, there exists a continuous path $\gamma:[0, 1]\to M$ with $\gamma(0)=x$ and $\gamma(1)=y$, whose image is contained in $B(x_{0}, Ar)\setminus B(x_{0}, A^{-1}r)$.
\end{defin} 

\begin{rem}\normalfont
Note that, even though the condition (RCA) is formulated for the specific point $x_{0}$, it is equivalent to the (RCA) condition with respect to any other point $x_{1}$ with possibly a different constant $A$.  
\end{rem}

\begin{example}\normalfont
Any \textit{Riemannian model} (see Subsection \ref{exindimtwo} and Section \ref{secisobdry}) with dimension $n\geq 2$ has relatively connected annuli.
\end{example}

\begin{cor}\label{thmlowerptbdgeo}
Let $\Omega$ be an end of a complete non-compact weighted manifold $(M, \mu)$ and assume that 
$M_{0}$ is a locally Harnack manifold with Harnack radius $\rho>0$, where $M_{0}$ is defined as above. Also assume that there exists $x_{0}\in M_{0}$ so that
\begin{itemize}
                \item $M_{0}$ satisfies (RCA) with some constant $A>1$.
        \item There exist constants $L>0$ and $C>0$ so that for all $r\geq L$, \begin{equation}\label{volumeannuli}V(Ar)-V(A^{-1}r)\leq C\log r,\end{equation} where we denote $V(r)=V(x_{0}, r)$.
        \item There exists a constant $v_{0}>0$ such that for any $y\in M_{0}$, \begin{equation}\label{lowerboundball}V(y, \rho/3)\geq v_{0}.\end{equation}
\end{itemize}Then, for any $x\in M$, there exist $\alpha>0$, $t_{x}>0$ and $c_{x}>0$ such that for all $t\geq t_{x}$, \begin{equation}\label{lowerbndendpara}p_{t}(x, x)\geq \frac{c_{x}}{t^{\alpha}},\end{equation} where $\alpha=\alpha(n, v_{0}, \rho, C)$.
\end{cor}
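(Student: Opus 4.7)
The plan is to verify the three hypotheses of Theorem~\ref{thmlocharsph}---the spherical Harnack inequality (\ref{annuliharnack2}), the polynomial volume growth (\ref{PolygrowthOM}), and the uniform lower ball-volume bound (\ref{upperboundVOm})---and then to invoke that theorem directly. Hypothesis (\ref{upperboundVOm}) is immediate from (\ref{lowerboundball}): since $V(y,\rho)\geq V(y,\rho/3)\geq v_0$, it holds with $\theta=0$. The polynomial growth (\ref{PolygrowthOM}) follows by iterating the annular bound (\ref{volumeannuli}): taking $r_0 \geq L$ and applying (\ref{volumeannuli}) at $\widetilde r = A^{2k-1} r_0$ gives $V(A^{2k}r_0)\leq V(A^{2(k-1)} r_0)+ C\log(A^{2k-1}r_0)$; summing for $k=1,\ldots,K$ yields $V(A^{2K}r_0)\leq V(r_0)+C'K^2$, i.e., $V(r)\leq C''(\log r)^2$ for all large $r$, which is dominated by $r^N$ for any $N>0$.

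The substantive step is (\ref{annuliharnack2}), for which I plan a chain-of-balls argument. Fix $r$ large and $x,y\in\partial B(x_0,r)$; by (RCA) there is a continuous path $\gamma$ joining $x$ to $y$ whose image lies in the annulus $\mathcal{A}_r:=B(x_0,Ar)\setminus B(x_0,A^{-1}r)$. Pick a maximal $(\rho/3)$-separated subset $\{y_i\}_{i\in I}$ of $\mathcal{A}_r$; by maximality the balls $B(y_i,\rho/3)$ cover $\mathcal{A}_r$, while by separation the balls $B(y_i,\rho/6)$ are pairwise disjoint. The doubling condition (\ref{locHarvD}) combined with (\ref{lowerboundball}) gives $V(y_i,\rho/6)\geq v_0/(2^n a)$, hence
\[
\frac{v_0}{2^n a}\,|I| \;\leq\; \sum_{i\in I} V(y_i,\rho/6) \;\leq\; V(Ar)-V(A^{-1}r) \;\leq\; C\log r,
\]
so $|I|\leq C_1\log r$. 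Following $\gamma$ through this cover yields an ordered chain $B(y_{i_0},\rho/3),\ldots,B(y_{i_N},\rho/3)$ of consecutively intersecting balls with $x\in B(y_{i_0},\rho/3)$, $y\in B(y_{i_N},\rho/3)$, and $N\leq |I|\leq C_1\log r$.

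The locally Harnack hypothesis, via Moser iteration from (\ref{locHarvD}) and (\ref{zwopoincare}), produces a scale-invariant elliptic Harnack inequality: there is $H\geq 1$ such that any positive $v$ harmonic on $B(z,R)$ with $R\leq\rho$ satisfies $\sup_{B(z,R/2)}v\leq H\inf_{B(z,R/2)}v$. Applying this along the chain---using radius $\rho$ at chain balls safely inside the harmonic domain $M_0\setminus\overline{B(x_0,A^{-1}r)}$, and shrinking radii for balls near $\partial B(x_0,A^{-1}r)$---one derives
\[
u(x) \;\leq\; H^{N+1}\,u(y) \;\leq\; H\cdot r^{C_1\log H}\,u(y),
\]
which is (\ref{annuliharnack2}) with $N_H=C_1\log H$. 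With all three hypotheses verified, Theorem~\ref{thmlocharsph} directly yields (\ref{lowerbndendpara}). The main technical obstacle I anticipate is the buffer issue in the chaining step---ensuring every chain ball sits inside the harmonic domain of $u$---which is handled through the scale-invariant Harnack available at suitably small radii near the inner boundary of $\mathcal{A}_r$.
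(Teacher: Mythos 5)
Your proof is correct and follows essentially the same route as the paper: verify the three hypotheses of Theorem \ref{thmlocharsph} and invoke it, with the substantive part being the chain-of-balls argument bounding the number of Harnack steps by $O(\log r)$ via the annular volume bound. The only cosmetic differences are that you use a maximal $(\rho/3)$-separated net plus one doubling step where the paper uses a Vitali covering with disjoint $(\rho/3)$-balls directly, and for the buffer issue at the inner boundary the paper simply enlarges the constant $A$ to some $A_0\geq A$ in (\ref{annuliharnack2}) (which is permitted since that constant is free) rather than shrinking radii.
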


\begin{proof}
As before, we denote $B_{r}=B(x_{0}, r)$.
Obviously, the hypothesis (\ref{lowerboundball}) implies the condition (\ref{upperboundVOm}) with $\theta=0$.
Hence, to apply Theorem \ref{thmlocharsph}, it remains to show that $M_{0}$ has a polynomial volume growth as in (\ref{upperboundVOm}) and $M_{0}$ satisfies the spherical Harnack inequality (\ref{annuliharnack2}).
The polynomial volume growth condition (\ref{upperboundVOm}) follows from (\ref{volumeannuli}).
        
Let us now prove that the spherical Harnack inequality (\ref{annuliharnack2}) holds in $M_{0}$. Assume that $r\geq L$ and cover the set $B_{Ar}\setminus B_{ A^{-1}r}$, with balls $B(x_{i}, \rho/3)$ where $x_{i}\in M_{0}$ and $A>1$ is as in (RCA). By applying the Banach process, there exists a number $\tau(r)$ and a subsequence of disjoint balls $\{B(x_{i_{k}}, \rho/3)\}_{k=1}^{\tau(r)}$ such that the union of the balls $\{B(x_{i_{k}}, \rho)\}_{k=1}^{\tau(r)}$ cover the set $B_{Ar}\setminus B_{A^{-1}r}$. Hence, it follows from (\ref{volumeannuli}), that \begin{equation}\label{summederballsabsch}\sum_{i=1}^{\tau(r)}{V(x_{i}, \rho/3)}\leq V( Ar)-V(A^{-1}r)\leq C\log r.\end{equation} Then the hypothesis (\ref{lowerboundball}), combined with (\ref{summederballsabsch}), implies that \begin{equation}\label{anzahlderballschain}\tau(r)\leq \frac{C\log r}{v_{0}}.\end{equation} Let $y_{1}, y_{2}$ be two points on $\partial B_{r}$ such that $\min_{\partial B_{r}}u=u(y_{1})$ and $\max_{\partial B_{r}}u=u(y_{2})$ and $\gamma$ be a continuous path connecting them in $B_{Ar}\setminus B_{A^{-1}r}$ as is it ensured by (RCA) for all $r>A^{2}$. Now select out of the sequence $\{B(x_{i_{k}}, \rho)\}_{k=1}^{\tau(r)}$ those balls that intersect $\gamma$. In this way, we obtain a chain of at most $\tau(r)$ balls, which connect $y_{1}$ and $y_{2}$. Now let $u$ be a positive harmonic function in $M_{0}\setminus\overline{B_{A_{0}^{-1}r}}$, where $A_{0}\geq A$ is such that any ball of this chain lies in $M_{0}\setminus \overline{B_{A_{0}^{-1}r}}$ for all $1\leq i\leq \tau(r)$ and $r>A_{0}^{2}$. Applying the local elliptic Harnack inequality to $u$ repeatedly in the balls of this chain, we obtain $$\max_{\partial B_{r}}u=u(y_{2})\leq (C_{\rho})^{\tau}u(y_{1})=(C_{\rho})^{\tau}\min_{\partial B_{r}}u,$$ where $C_{\rho}$ is the Harnack constant in all $B(x_{i_{k}}, \rho)$. Together with (\ref{anzahlderballschain}), this yields $$\max_{\partial B_{r}}u\leq r^{\frac{c}{v_{0}}\log C_{\rho}}\min_{\partial \partial B_{r}}u,$$ which proves the spherical Harnack inequality (\ref{annuliharnack2}) with $N_{H}=\frac{C}{v_{0}}\log C_{\rho}$. Thus the hypotheses of Theorem \ref{thmlocharsph} are fulfilled and we obtain from (\ref{lowerbndendparamitharnaanuohneomloc}), that for any $x\in M$, there exist $t_{x}>0$, $c_{x}>0$ and $\alpha>0$ such that for all $t\geq t_{x}$, $$p_{t}(x, x)\geq \frac{c_{x}}{t^{\alpha}},$$ where $\alpha=\alpha(n, N_{H})$, which finishes the proof.
\end{proof}


\begin{defin}\normalfont
As usual, for any piecewise $C^{1}$ path $\gamma:I\to M$, where $I$ is an interval in $\mathbb{R}$, denote by $l(\gamma)$ the length of $\gamma$ defined by $$l(\gamma)=\int_{I}{|\dot{\gamma}(t)|dt},$$ where $\dot{\gamma}$ is the velocity of $\gamma$, given by $\dot{\gamma}(t)(f)=\frac{d}{dt}f(\gamma(t))$ for any $f\in C^{\infty}(M)$.   
\end{defin}

\begin{cor}\label{thmlowerptbdgeo2}
Let $\Omega$ be an end of a complete non-compact weighted manifold $(M, \mu)$ and assume that 
for some $\kappa\geq0$, we have \begin{equation}\label{ricciincorlow}Ric(M_{0})\geq -\kappa,\end{equation} where $M_{0}$ is defined as above. Suppose that there exists $x_{0}\in M_{0}$ so that 
\begin{itemize}
                \item $M_{0}$ satisfies (RCA) with $A>1$ and piecewise $C^{1}$ path $\gamma$ so that there is some constant $c>0$ such that for all $r>A^{2}$, \begin{equation}\label{laengegeodesicbound}l(\gamma)\leq c\log r.\end{equation}
                \item There are constants $v_{0}>0$ and $\theta\geq 0$ so that for any $y\in M_{0}$, if $d(y, x_{0})\leq R$ for some $R>1$, it holds that $$V(y, \rho)\geq v_{0}R^{-\theta}.$$
\end{itemize}Then, for any $x\in M$, there exist $\alpha>0$, $t_{x}>0$ and $c_{x}>0$ such that for all $t\geq t_{x}$, \begin{equation}\label{lowerbndendparavol}p_{t}(x, x)\geq \frac{c_{x}}{t^{\alpha}},\end{equation} where $\alpha=\alpha(c, \theta, \kappa)$.
\end{cor}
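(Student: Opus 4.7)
The plan is to reduce to Theorem \ref{thmlocharsph} by verifying, one at a time, that each of its hypotheses follows from those of the corollary. The Ricci lower bound \eqref{ricciincorlow} supplies, via Bishop--Gromov volume comparison together with Buser's Poincar\'e inequality \cite{Buser1982}, the locally Harnack property with some Harnack radius $\rho>0$ and parameters $a,b,n$ depending on $\kappa$ and the dimension. The second bullet of the corollary is literally hypothesis \eqref{upperboundVOm} of Theorem \ref{thmlocharsph}. Thus the two remaining tasks are to derive the spherical Harnack inequality \eqref{annuliharnack2} and the polynomial volume growth \eqref{PolygrowthOM} for $M_{0}$ from the short-path (RCA) hypothesis together with the Ricci lower bound.

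For the spherical Harnack inequality I would follow the chain-of-balls argument used in the proof of Corollary \ref{thmlowerptbdgeo}, but cover the short path itself rather than the whole annulus. Fix $r$ sufficiently large, let $u$ be positive harmonic in $M_{0}\setminus\overline{B_{A_{0}^{-1}r}}$ for a suitable $A_{0}\geq A$, and pick $y_{1},y_{2}\in\partial B_{r}$ attaining $\min_{\partial B_{r}}u$ and $\max_{\partial B_{r}}u$. By the strengthened (RCA) there is a piecewise $C^{1}$ path $\gamma\subset B_{Ar}\setminus B_{A^{-1}r}$ joining them with $l(\gamma)\leq c\log r$. Choose successive points along $\gamma$ at mutual distance at most $\rho/2$; there are at most $m\leq 2c\log r/\rho+1$ of them, and the balls of radius $\rho$ around them all lie in $M_{0}\setminus\overline{B_{A_{0}^{-1}r}}$ once $A_{0}$ is taken slightly larger than $A$. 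Iterating the local elliptic Harnack inequality along this chain yields
$$\max_{\partial B_{r}}u\leq C_{\rho}^{m}\min_{\partial B_{r}}u\leq C\, r^{(2c/\rho)\log C_{\rho}}\min_{\partial B_{r}}u,$$
which is \eqref{annuliharnack2} with $N_{H}=(2c/\rho)\log C_{\rho}$.

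For the polynomial volume growth I would exploit the same path-length bound to control the extrinsic diameter of the sphere $\partial B_{r}$: fixing any $y_{\ast}\in\partial B_{r}$, the bound $l(\gamma)\leq c\log r$ forces $d(y_{\ast},y)\leq c\log r$ for every $y\in\partial B_{r}$, so $\partial B_{r}\subset B(y_{\ast},c\log r)$. Any point $z$ with $|d(x_{0},z)-r|<\rho$ lies within distance $\rho$ of $\partial B_{r}$ (along a minimizing geodesic from $x_{0}$), hence the annulus $B_{r+\rho}\setminus B_{r-\rho}$ is contained in $B(y_{\ast},c\log r+\rho)$. Bishop--Gromov comparison under $\mathrm{Ric}\geq-\kappa$ then gives
$$\mu_{0}(B_{r+\rho}\setminus B_{r-\rho})\leq \mu_{0}\bigl(B(y_{\ast},c\log r+\rho)\bigr)\leq C_{1}e^{C_{2}\log r}=C_{1}r^{C_{2}},$$
with $C_{2}$ depending only on $c$, $\kappa$ and the dimension. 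Summing over the $O(R/\rho)$ annuli that tile $[0,R]$ produces $V(R)\leq CR^{C_{2}+1}$, i.e.\ \eqref{PolygrowthOM} with $N=C_{2}+1$.

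With all four hypotheses of Theorem \ref{thmlocharsph} now in hand, that theorem delivers \eqref{lowerbndendparavol} with $\alpha=\alpha(N,\theta,n,N_{H})$; tracing through the dependencies shows $\alpha=\alpha(c,\theta,\kappa)$. The main subtle point I expect to hit is ensuring that the Bishop--Gromov exponential comparison, applied on a ball of radius $c\log r$, actually yields a polynomial (rather than exponential) bound in $r$: this is exactly where the logarithmic length hypothesis on the (RCA) paths is essential, and it is what separates this corollary from what one would obtain from a Ricci lower bound alone.
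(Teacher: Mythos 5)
Your argument follows the paper's route closely: derive the locally Harnack property from the Ricci lower bound (Buser/Bishop--Gromov), prove the spherical Harnack inequality by chaining the local Harnack inequality through at most $O(\log r)$ balls of radius $\rho$ covering the short (RCA) path (the paper covers the path with $\rho$-balls directly rather than selecting points at spacing $\rho/2$, but this is only bookkeeping), obtain polynomial volume growth from the logarithmic extrinsic diameter of $\partial B_{r}$ together with the Bishop--Gromov exponential volume bound, and conclude via Theorem \ref{thmlocharsph}. The paper states the volume-growth step without detail, so your explicit annulus decomposition is a welcome expansion. One small imprecision: your claim that any $z$ with $|d(x_{0},z)-r|<\rho$ lies within distance $\rho$ of $\partial B_{r}$ ``along a minimizing geodesic from $x_{0}$'' is only justified for $d(x_{0},z)\geq r$; for $d(x_{0},z)<r$ the minimizing geodesic from $x_{0}$ terminates at $z$ before reaching $\partial B_{r}$, and the extension of that geodesic need not stay minimizing. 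This does not affect the conclusion: simply tile $[R_{0},R]$ with the one-sided annuli $B_{r+\rho}\setminus B_{r}$, for which the outer-side geodesic projection is valid, and sum as you do.
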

\begin{proof}
The assumption (\ref{ricciincorlow}) implies that $M_{0}$ is a locally Harnack manifold. Hence we are left to show that $M_{0}$ has a polynomial volume growth as in (\ref{PolygrowthOM}) and satisfies the spherical Harnack inequality (\ref{annuliharnack2}) to apply Theorem \ref{thmlocharsph}.
Again we denote $B_{r}=B(x_{0}, r)$ and $V(r)=V(x_{0}, r)$.
By the Bishop-Gromov theorem, the hypothesis (\ref{ricciincorlow}) implies that there exists a constant $C_{\kappa}>$, so that for any $y\in M_{0}$ and $R>1$, \begin{equation}\label{Volumeboundexp}V(y, R)\leq e^{C_{\kappa}R}.\end{equation}
Together with the assumption (\ref{laengegeodesicbound}), this yields that the polynomial volume growth condition (\ref{PolygrowthOM}) holds in $M_{0}$.
        
Let us now show that $M_{0}$ satisfies the spherical Harnack inequality (\ref{annuliharnack2}). Let $A>1$ be as above and assume that $r>A^{2}$. Denote by $y_{1}, y_{2}$ the points on $\partial B_{r}$ such that $\min_{\partial B_{r}}u=u(y_{1})$ and $\max_{\partial B_{r}}u=u(y_{2})$ and let $\gamma$ be a continuous path connecting them in $B_{Ar}\setminus B_{A^{-1}r}$ as is it ensured by (RCA). Then cover the path $\gamma$ with balls $\{B(x_{i}, \rho)\}_{i=1}^{\tau(r)}$, where $x_{i}\in M_{0}$ and $\rho>0$. Now let $u$ be a positive harmonic function in $M_{0}\setminus\overline{B_{A_{0}^{-1}r}}$, where $A_{0}\geq A$ is such that $B(x_{i}, \rho)\subset M_{0}\setminus \overline{B_{A_{0}^{-1}r}}$ for all $1\leq i\leq \tau(r)$ and $r>A_{0}^{2}$. In this way, we obtain a chain of at most $\tau(r)$ balls $B(x_{i}, \rho)$, which connect $y_{1}$ and $y_{2}$. By (\ref{laengegeodesicbound}), we deduce that \begin{equation}\label{anzahlballsinsquence} \tau(r)\leq \frac{c}{\rho}\log(r).\end{equation} Applyig the local elliptic Harnack inequality to $u$ repeatedly in the balls of this chain, we obtain $$\max_{\partial B_{r}}u=u(y_{2})\leq (C_{\rho})^{\tau}u(y_{1})=(C_{\rho})^{\tau}\min_{\partial B_{r}}u,$$ where $C_{\rho}$ is the Harnack constant in all $B(x_{i}, \rho)$. Together with (\ref{anzahlballsinsquence}), this yields $$\max_{\partial B_{r}}u\leq r^{\frac{c}{\rho}\log C_{\rho}}\min_{\partial B_{r}}u,$$ which proves (\ref{annuliharnack2}) with $N_{H}=\frac{c}{\rho}\log C_{\rho}$.
Thus the hypotheses of Theorem \ref{thmlocharsph} are fulfilled and we obtain by (\ref{lowerbndendparamitharnaanuohneomloc}), that for any $x\in M$, there exist $t_{x}>0$, $c_{x}>0$ and $\alpha>0$ such that for all $t\geq t_{x}$, $$p_{t}(x, x)\geq \frac{c_{x}}{t^{\alpha}},$$ which finishes the proof.
\end{proof}

\subsection{An example in dimension two}\label{exindimtwo}

Consider the topological space $M=(0, +\infty)\times \mathbb{S}^{1}$, that is, any point $x\in M$ can be represented in the polar coordinates $x=(r, \theta)$ with $r>0$ and $\theta\in \mathbb{S}^{1}$. Equip $M$ with the Riemannian metric $ds^{2}$ given by \begin{equation}\label{metricmodelman}ds^{2}=dr^{2}+\psi^{2}(r)d\theta^{2},\end{equation} where $\psi(r)$ is a smooth positive function on $(0, +\infty)$ and $d\theta^{2}$ is the standard Riemannian metric on $\mathbb{S}^{1}$. In this case, $M$ is called a \textit{two-dimensional Riemannian model with a pole}.

\begin{rem}\normalfont
A sufficient and necessary condition, for the existence of this manifold is that $\psi$ satisfies the conditions $\psi(0)=0$ and $\psi'(0)=1$. This ensures that the metric $ds^{2}$ can be smoothly extended to the origin $r=0$ (see \cite{greene2006function}).
\end{rem}

We define the area function $S$ on $(0, +\infty)$ by \begin{equation}\label{defofarea}S(r)=\psi(r).\end{equation}

\begin{prop}\label{propforspherintwo}
Let $M$ be a two-dimensional Riemannian model with a pole. Suppose that for any $A>1$, there exists a constant $c>0$, so that for all large enough $r$, \begin{equation}\label{langsamevariquo}\sup_{t\in (A^{-1}r, Ar)}\frac{S''_{+}(t)}{S(t)}\leq c\frac{S''_{+}(r)}{S(r)}.\end{equation} Also assume that there exists a constant $N>0$ such that, for every large enough $r$,
\begin{equation}\label{spherhartwodimmod}\frac{S(r)}{r}+\sqrt{S''_{+}(r)S(r)}\leq N\log(r).\end{equation}
Then the spherical Harnack inequality (\ref{annuliharnack2}) holds in $M$.
\end{prop}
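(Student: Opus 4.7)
The plan is to prove the spherical Harnack inequality by chaining the local elliptic Harnack inequality around the circle $\partial B_{r}$, where $B_{r} := B(x_{0}, r)$ and $x_{0}$ is the pole. Two facts about a two-dimensional Riemannian model are decisive: the circumference of $\partial B_{r}$ equals $2\pi\psi(r) = 2\pi S(r)$, and the Ricci curvature (equal to the Gaussian curvature in dimension two) at radius $t$ is $-\psi''(t)/\psi(t) = -S''(t)/S(t)$.

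First, fix any $A>1$ (we are free to choose $A$ in the definition of spherical Harnack) and apply the slow-variation hypothesis (\ref{langsamevariquo}) with this $A$ to obtain a constant $c>0$. Then for all $t\in(A^{-1}r, Ar)$ the curvature satisfies $\mathrm{Ric}(t)\geq -\kappa(r)^{2}$ with $\kappa(r)^{2}:=c\,S''_{+}(r)/S(r)$. Next, set the local scale
\begin{equation*}
\delta(r) := \min\!\left\{\frac{1-A^{-1}}{4}\,r,\ \frac{1}{\kappa(r)}\right\},
\end{equation*}
where $1/\kappa(r)$ is read as $+\infty$ when $\kappa(r)=0$ (in which case the slow-variation bound forces $S''_{+}\equiv 0$ on the annulus and hence non-negative curvature there). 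With this choice, every ball $B(x,2\delta(r))$ centred at $x\in\partial B_{r}$ lies in $B_{Ar}\setminus\overline{B_{A^{-1}r}}$ and carries the lower Ricci bound above with $\kappa(r)\delta(r)\leq 1$. The local elliptic Harnack inequality (via Yau's gradient estimate, whose constant depends only on the dimensionless product $\kappa\delta$) therefore produces a constant $C_{0}$, independent of $r$, such that any positive harmonic function on $B(x,2\delta(r))$ satisfies $\sup_{B(x,\delta(r))}u\leq C_{0}\inf_{B(x,\delta(r))}u$.

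Given points $y_{1},y_{2}\in\partial B_{r}$ realising the minimum and the maximum of $u$, connect them along $\partial B_{r}$ by a chain $y_{1}=z_{0},z_{1},\dots,z_{n}=y_{2}$ with consecutive intrinsic distances at most $\delta(r)$; since $\partial B_{r}$ has length $2\pi S(r)$ we may take
\begin{equation*}
n \;\leq\; \frac{2\pi S(r)}{\delta(r)} \;\leq\; C_{1}\!\left(\frac{S(r)}{r} + S(r)\kappa(r)\right) \;\leq\; C_{2}\!\left(\frac{S(r)}{r}+\sqrt{S(r)S''_{+}(r)}\right) \;\leq\; C_{3}\log r
\end{equation*}
for all sufficiently large $r$, the last inequality being exactly (\ref{spherhartwodimmod}). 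Iterating the local Harnack inequality along this chain (since $z_{i+1}\in B(z_{i},\delta(r))$ and $u$ is harmonic on $B(z_{i},2\delta(r))$) gives $u(z_{n})\leq C_{0}^{\,n}u(z_{0})$, whence
\begin{equation*}
\max_{\partial B_{r}}u \;\leq\; C_{0}^{\,C_{3}\log r}\min_{\partial B_{r}}u \;=\; r^{C_{3}\log C_{0}}\min_{\partial B_{r}}u,
\end{equation*}
which is precisely (\ref{annuliharnack2}) with $N_{H}=C_{3}\log C_{0}$.

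The main obstacle is invoking the local Harnack inequality with a constant depending only on the normalised product $\kappa\delta$ rather than on $\kappa$ and $\delta$ separately; this is the scale-invariant form of Yau's gradient estimate (equivalently, Moser iteration combined with Bishop--Gromov volume doubling under a lower Ricci bound). Once that is in hand the proof is the direct chain-of-balls calculation above, and the two terms $S(r)/r$ and $\sqrt{S(r)S''_{+}(r)}$ in (\ref{spherhartwodimmod}) correspond precisely to the two regimes in which $\delta(r)$ is dictated, respectively, by the radial width of the annulus and by the local curvature scale.
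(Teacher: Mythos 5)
Your proof is correct and follows essentially the same approach as the paper: both derive a lower Ricci bound $-\kappa(r)^{2}\simeq -S''_{+}(r)/S(r)$ on the annulus from the slow-variation hypothesis (\ref{langsamevariquo}), propagate the Cheng--Yau gradient estimate around $\partial B_{r}$ (length $\simeq S(r)$), and invoke (\ref{spherhartwodimmod}) to convert the resulting two-term bound $S(r)/r+\sqrt{S(r)S''_{+}(r)}$ into $N\log r$. The only cosmetic difference is that you iterate a discrete chain of Harnack balls with the curvature-adaptive radius $\delta(r)=\min\{cr,1/\kappa(r)\}$, whereas the paper integrates the gradient inequality continuously along a path at the fixed scale $R=\epsilon r$; the two executions produce the same exponent up to constants.
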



\begin{proof}
Fix some $x_{0}\in M$ and denote $B_{r}=B(x_{0}, r)$. Since any model manifold of dimension $n\geq 2$ satisfies the (RCA) condition, there exists $A_{0}>1$ such that for all $r>A_{0}^{2}$ and any $x_{1}, x_{2}\in \partial B_{r}$, there exists $T>0$ and a continuous path $\gamma:[0, T]\to M$ such that $\gamma(0)=x_{1}$ and $\gamma(T)=x_{2}$, whose image is contained in $B_{A_{0}r}\setminus B_{A_{0}^{-1}r}$. Let us choose $A>A_{0}$ so that there exists a constant $\epsilon>0$, such that $B(x, R)\subset B_{Ar}\setminus \overline{B_{A^{-1}r}}$, for any $x\in \gamma([0, T])$, where $R=\epsilon r$. Let $u$ be a positive harmonic function in $M\setminus\overline{B_{A^{-1}r}}$ and $x_{1},x_{2}\in \partial B_{r}$ such that $\max_{\partial B_{r}}u=u(x_{1})$ and $\min_{\partial B_{r}}u=u(x_{2})$. Thus, we have to show that there are constants $N_{H}>0$ and $C_{H}>0$, so that if $r$ is large enough, then \begin{equation}\label{sphericalharnackmodtwo}u(x_{1})\leq C_{H}r^{N_{H}}u(x_{2}).\end{equation}
Let $x\in \gamma([0, T])$. Recall from [\cite{Grigoryan2012}, Exercise 3.31], that the Ricci curvature $Ric$ on $M$ is given by \begin{equation}\label{gaussin2S}Ric=-\frac{S''}{S}.\end{equation}
Hence, we obtain from (\ref{gaussin2S}), $$Ric(x)\geq\inf_{t\in (A^{-1}r, Ar)}\left(-\frac{S''(t)}{S(t)}\right)\geq-\sup_{t\in (A^{-1}r, Ar)}\left(\frac{S''_{+}(t)}{S(t)}\right).$$ By (\ref{langsamevariquo}), we get, assuming that $r$ is large enough, \begin{equation}\label{riccigaussgamma}Ric(x)\geq-c\frac{S''_{+}(r)}{S(r)}=:-\kappa(r).\end{equation} Clearly, we can assume that $|\gamma'(t)|=1$. We have $$\int_{0}^{T}{\frac{|\nabla u(\gamma(t))|}{u(\gamma(t))}dt}\leq \sup_{0\leq t\leq T}\frac{|\nabla u(\gamma(t))|}{u(\gamma(t))}\int_{0}^{T}{dt}\leq \sup_{0\leq t\leq T}\frac{|\nabla u(\gamma(t))|}{u(\gamma(t))}d(x_{1}, x_{2}).$$ Again, since $M$ has dimension $n=2$, and as $x_{1},x_{2}\in \partial B_{r}$, we see that 
$$d(x_{1}, x_{2})\leq S(r),$$ whence $$\int_{0}^{T}{\frac{|\nabla u(\gamma(t))|}{u(\gamma(t))}dt}\leq \sup_{0\leq t\leq T}\frac{|\nabla u(\gamma(t))|}{u(\gamma(t))}S(r).$$
Applying the well-known gradient estimate (cf. \cite{cheng1975differential}) to the harmonic function $u$ in all balls $B(x, R)$, we obtain, $$\sup_{0\leq t\leq T}\frac{|\nabla u(\gamma(t))|}{u(\gamma(t))}\leq C_{n}\left(\frac{1+R\sqrt{\kappa(r)}}{R}\right),$$ where $\kappa(r)$ is given by (\ref{riccigaussgamma}) and $C_{n}>0$ is a constant depending only on $n$.
Therefore, we deduce \begin{align*}\log u(x_{1})-\log u(x_{2})=\left|\int_{0}^{T}{\frac{d\log u(\gamma(t))}{dt}} \right|&\leq\int_{0}^{T}{\frac{|du(\gamma(t))|}{u(\gamma(t))}}\\&=\int_{0}^{T}{\frac{|\langle \nabla u, \gamma'(t)\rangle|}{u(\gamma(t))}dt}\\&\leq\int_{0}^{T}{\frac{|\nabla u(\gamma(t))|}{u(\gamma(t))}dt}\\&\leq C_{n}\left(\frac{1}{\epsilon r}+\sqrt{\kappa(r)}\right)S(r),\end{align*} which is equivalent to $$u(x_{1})\leq \exp\left(C_{n}\left(\frac{S(r)}{\epsilon r}+S(r)\sqrt{\kappa(r)}\right)\right)u(x_{2}).$$ Hence, we get by (\ref{riccigaussgamma}),  $$u(x_{1})\leq \exp\left(C_{n}\left(\frac{S(r)}{\epsilon r}+\sqrt{cS''_{+}(r)S(r)}\right)\right)u(x_{2}).$$
Finally, by (\ref{spherhartwodimmod}), we deduce for large enough $r$, $$u(x_{1})\leq r^{C_{n}\max\left\{\sqrt{c}, \frac{1}{\epsilon}\right\}N}u(x_{2}),$$ which proves (\ref{sphericalharnackmodtwo}) with $C_{H}=1$ and $N_{H}=C_{n}\max\left\{\sqrt{c}, \frac{1}{\epsilon}\right\}N$ and finishes the proof.
\end{proof}

\begin{example}\normalfont
Let $(M, \mu)$ be a two-dimensional weighted manifold with end $\Omega$ and, following the notation in Theorem \ref{thmlocharsph}, suppose that $M_{0}$ is a Riemannian model with a pole such that $$S_{0}(r)=\left\{ \begin{array}{lc} r\log r,&r\geq 2 \\ r,&r\leq 1.\end{array}\right.$$  
Let us show that $M_{0}$ satisfies the hypotheses of Theorem \ref{thmlocharsph} so that for any $x\in M$, there exist $t_{x}>0$, $c_{x}>0$ and $\alpha>0$ such that for all $t\geq t_{x}$, \begin{equation}\label{lowermodelheatfulinex}p_{t}(x, x)\geq \frac{c_{x}}{t^{\alpha}}.\end{equation}
Since $S_{0}''(r)=\frac{1}{r}$ for $r\geq 2$, the inequality (\ref{langsamevariquo}) is satisfied and also $$\frac{S_{0}(r)}{r}+\sqrt{(S_{0}'')_{+}(r)S_{0}(r)}=\log r+\sqrt{\log r}\leq 2 \log r,$$ whence (\ref{spherhartwodimmod}) holds and we get that $M_{0}$ satisfies the spherical Harnack inequality (\ref{annuliharnack2}). On the other hand, we have for $r\geq 2$, $-\frac{S_{0}''(r)}{S_{0}(r)}=-\frac{1}{r^{2}\log r}$ so that it follows from (\ref{gaussin2S}) that $M_{0}$ has non-positive bounded below sectional curvature. Hence, $M_{0}$ is a locally Harnack manifold and, as it is simply connected, is a Cartan-Hadamard manifold which yields that the balls in $M_{0}$ of have at least euclidean volume. Therefore, condition (\ref{upperboundVOm}) holds as well and we conclude from Theorem \ref{thmlocharsph} that $(M, \mu)$ admits the estimate (\ref{lowermodelheatfulinex}).
\end{example}

\section{Isoperimetric inequalities for warped products}\label{Isoperimetric inequalities for warped products}

Let $(M, \mu)$ be a weighted manifold. 
\begin{defin}\normalfont
For any Borel set $A\subset M$, define its perimeter $\mu^{+}(A)$ by $$\mu^{+}(A)=\liminf_{r\to 0^{+}}\frac{\mu(A^{r})-\mu(A)}{r},$$ where $A^{r}$ is the $r$-neighborhood of $A$ with respect to the Riemannian metric of $M$.
\end{defin}

\begin{defin}\normalfont We say that $(M, \mu)$ admits the \textit{lower isoperimetric function} $J$ if, for any precompact open set $U\subset M$ with smooth boundary, \begin{equation}\label{isoperimetricdefJ}\mu^{+}(U)\geq J(\mu(U)).\end{equation}
\end{defin}

For example, the euclidean space $\mathbb{R}^{n}$ with the Lebesgue measure satisfies the inequality in (\ref{isoperimetricdefJ}) with the function $J(v)=c_{n}v^{\frac{n-1}{n}}$.

\subsection{Setting and main theorem}\label{settingandmainiso}
Let $(M_{1}, \mu_{1})$ and $(M_{2}, \mu_{2})$ be weighted manifolds and let $M=M_{1}\times M_{2}$ be the direct product of $M_{1}$ and $M_{2}$ as topological spaces. This means that any point $z\in M$ can be written as $z=(x, y)$ with $x\in M_{1}$ and $y\in M_{2}$. Then we define the Riemannian metric $ds^{2}$ on $M$ by \begin{equation}\label{riemmetriconM}ds^{2}=dx^{2}+\psi^{2}(x)dy^{2},\end{equation} where $\psi$ is a smooth positive function on $M_{1}$ and $dx^{2}$ and $dy^{2}$ denote the Riemannian metrics on $M_{1}$ and $M_{2}$, respectively. Let us define the measure $\mu$ on $M$ by \begin{equation}\label{defofmeasonproduct}\mu=\mu_{1}\times \mu_{2}\end{equation} and note that $(M, \mu)$ is then a weighted manifold with respect to the metric in (\ref{riemmetriconM}).

Denote by $\nabla$ the gradient on $M$ and with $\nabla_{x}$ and $\nabla_{y}$ the gradients on $M_{1}$ and $M_{2}$, respectively. It follows from (\ref{riemmetriconM}), that we have the identity \begin{equation}\label{identitygrad}|\nabla u|^{2}=|\nabla_{x}u|^{2}+\frac{1}{\psi^{2}(x)}|\nabla_{y}u|^{2},\end{equation} for any smooth function $u$ on $M$.

\begin{defin}\normalfont
Let $\varphi:(0, +\infty)\to (0, +\infty)$ be a monotone decreasing function. Then we define the generalized inverse function $\phi$ of $\varphi$ on $(0, +\infty)$ by \begin{equation}\label{defgeneinv}\phi(s)=\sup\{t>0:\varphi(t)>s\}.\end{equation}
\end{defin}

\begin{lemma}[\cite{de2015study}, Proposition 4.2] The generalized inverse $\phi$ of $\varphi$ has the following properties:
\begin{enumerate}
                \item $\phi$ is monotone decreasing, right continuous and $\lim_{s\to \infty}\phi(s)=0$;
                \item $\varphi$ is right continuous if and only if $\varphi$ itself is the generalized function of $\phi$, that is \begin{equation}\label{itselfgeneralized}\varphi(t)=\sup\{s>0:\phi(s)>t\};\end{equation}
                \item we have the identity \begin{equation}\label{genehavesameint}\int_{0}^{\infty} {\varphi(t) dt} = \int_{0}^{\infty}{\phi(s)ds}.\end{equation}
\end{enumerate} 
\end{lemma}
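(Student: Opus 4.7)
The plan is to prove the three parts in order, relying on the single observation that monotonicity of $\varphi$ forces the superlevel set $\{t>0:\varphi(t)>s\}$ to be an interval whose right endpoint equals $\phi(s)$, and whose Lebesgue measure is therefore $\phi(s)$. Once this layer-cake picture is in place, (1) and (3) become essentially automatic, and only (2) requires a genuine use of right continuity.

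For part (1), monotonicity of $\phi$ is immediate from the set inclusion $\{t:\varphi(t)>s_2\}\subset\{t:\varphi(t)>s_1\}$ whenever $s_1<s_2$. To establish right continuity at $s$, I would take $s_n\downarrow s$, note that $\phi(s_n)\uparrow L\leq\phi(s)$ by monotonicity, and then verify $\phi(s)\leq L$ by the following route: for any $t<\phi(s)$ there is some $t'\in(t,\phi(s)]$ with $\varphi(t')>s$, so $\varphi(t)\geq\varphi(t')>s_n$ for $n$ large enough, hence $t\leq\phi(s_n)\leq L$; letting $t\uparrow\phi(s)$ finishes the argument. For the limit $\phi(s)\to 0$ as $s\to\infty$, I would argue by contradiction: if $\phi(s)\geq\varepsilon$ for all $s>0$, the same monotonicity trick yields $\varphi(\varepsilon/2)>s$ for every $s$, contradicting that $\varphi(\varepsilon/2)$ is a finite positive number.

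For part (2), the key is to establish the equivalence
\[
\phi(s)>t \ \Longleftrightarrow\ \varphi(t)>s
\]
under right continuity of $\varphi$. The ``$\Rightarrow$'' direction uses only monotonicity ($\phi(s)>t$ supplies some $t'>t$ with $\varphi(t')>s$, and then $\varphi(t)\geq\varphi(t')>s$), while the ``$\Leftarrow$'' direction is where right continuity enters: $\varphi(t)>s$ forces $\varphi>s$ on some interval $[t,t+\delta)$, whence $\phi(s)\geq t+\delta/2>t$. With this equivalence in hand, $\sup\{s>0:\phi(s)>t\}=\sup\{s>0:\varphi(t)>s\}=\varphi(t)$, giving (\ref{itselfgeneralized}). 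The reverse implication is free: by part (1), the generalized inverse of any monotone decreasing function is automatically right continuous, so if $\varphi$ itself equals the generalized inverse of $\phi$, then $\varphi$ inherits right continuity.

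For part (3), I would apply Fubini to the indicator $\mathbf{1}_{\{\varphi(t)>s\}}$ on $(0,\infty)^2$, which yields
\[
\int_0^\infty\varphi(t)\,dt=\int_0^\infty\bigl|\{t>0:\varphi(t)>s\}\bigr|\,ds=\int_0^\infty\phi(s)\,ds
\]
by the opening observation. The main obstacle I anticipate is the strict-versus-nonstrict distinction in part (2): one must resist concluding $\phi(s)>t$ from $\varphi(t)>s$ by definition alone (this only gives $\phi(s)\geq t$), and carefully invoke right continuity to upgrade to strict inequality. Everything else is routine monotone-convergence bookkeeping.
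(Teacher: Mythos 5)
The paper cites this lemma from \cite{de2015study} and provides no proof of its own, so there is nothing to compare your argument against within the paper itself. Your proof is correct and self-contained: the right-continuity argument for $\phi$ in part~(1), the biconditional $\phi(s)>t\Leftrightarrow\varphi(t)>s$ in part~(2) (correctly flagging that right continuity of $\varphi$ is needed only for the $\Leftarrow$ direction, since the definition of the supremum alone would only give $\phi(s)\geq t$), and the Fubini/layer-cake argument for part~(3) are all sound. One small point worth making explicit if you write this up: in part~(3) the observation that $\{t>0:\varphi(t)>s\}$ is an interval with left endpoint $0$ relies on monotonicity of $\varphi$, and its Lebesgue measure equals $\phi(s)$ regardless of whether the right endpoint is included; this is exactly what makes the identity hold without any right-continuity hypothesis on $\varphi$, which is consistent with the lemma's part~(3) being stated unconditionally.
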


\begin{lemma}[\cite{federer2014geometric}]\label{approxofboundary}
Let $U$ be a precompact open subset of a weighted manifold $(M, \mu)$ with smooth boundary. Then $$\mu^{+}(U)=\inf_{\{u_{n}\}}\limsup_{n\to \infty}\int_{M}{|\nabla u_{n}|d\mu}=\inf_{\{u_{n}\}}\liminf_{n\to \infty}\int_{M}{|\nabla u_{n}|d\mu},$$ where $\{u_{n}\}_{n\in \mathbb{M}}$ is a monotone increasing sequence of smooth non-negative functions with compact support, converging pointwise to the characteristic function of the set $U$.
\end{lemma}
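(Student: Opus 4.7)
The claimed equalities follow from two one-sided inequalities together with the trivial $\liminf\leq\limsup$: an upper bound
$\inf_{\{u_n\}}\limsup_n \int_M|\nabla u_n|\,d\mu\leq \mu^{+}(U)$
obtained by exhibiting one explicit smooth monotone approximating sequence, and a matching lower bound
$\mu^{+}(U)\leq \inf_{\{u_n\}}\liminf_n\int_M|\nabla u_n|\,d\mu$
valid for \emph{every} admissible sequence, obtained via the coarea formula combined with the lower semicontinuity of the perimeter under $L^{1}$-convergence of characteristic functions.

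\textbf{Upper bound.} Since $\partial U$ is a smooth compact hypersurface, the distance function $d(\cdot,\partial U)$ is smooth in some two-sided tubular neighborhood of $\partial U$ with $|\nabla d|=1$ there. Fix a smooth non-decreasing $\eta\colon\mathbb{R}\to[0,1]$ with $\eta(t)=0$ for $t\leq 0$, $\eta(t)=1$ for $t\geq 1$, and all derivatives vanishing at the endpoints, and set
\[
u_n(x)=\begin{cases}\eta\bigl(n\,d(x,\partial U)\bigr),&x\in U,\\ 0,&x\notin U.\end{cases}
\]
For $n$ large enough, $u_n$ is smooth, supported in the compact set $\overline{U}$, non-negative, non-decreasing in $n$, and converges pointwise to $\chi_U$. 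Using the coarea formula together with $|\nabla d|=1$ gives
\[
\int_M|\nabla u_n|\,d\mu=\int_0^1 \eta'(s)\,S(s/n)\,ds,
\]
where $S(r)$ denotes the $\mu'$-measure of the level set $\{x\in U:d(x,\partial U)=r\}$. Smoothness of $\partial U$ yields $S(r)\to\mu^{+}(U)$ as $r\to 0^{+}$, so $\int_M|\nabla u_n|\,d\mu\to\mu^{+}(U)$, establishing the upper bound.

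\textbf{Lower bound.} Let $\{u_n\}$ be any admissible sequence. By the coarea formula,
\[
\int_M|\nabla u_n|\,d\mu=\int_0^{\infty}\mu^{+}(\{u_n>t\})\,dt.
\]
Monotone increase to $\chi_U$ forces $0\leq u_n\leq \chi_U$, so for each $t\in (0,1)$ the superlevel set $E_{n,t}:=\{u_n>t\}$ is contained in $U$ and $\chi_{E_{n,t}}\to\chi_U$ in $L^{1}(M,\mu)$ by dominated convergence (precompactness of $U$ supplies the dominating function). The lower semicontinuity of the perimeter under $L^{1}$-convergence of indicators then yields
\[
\mu^{+}(U)\leq \liminf_{n\to\infty}\mu^{+}(E_{n,t})\qquad\text{for every }t\in(0,1),
\]
and Fatou's lemma in the $t$-integral gives
\[
\liminf_{n\to\infty}\int_M|\nabla u_n|\,d\mu\geq \int_0^1\liminf_{n\to\infty}\mu^{+}(E_{n,t})\,dt\geq \mu^{+}(U).
\]

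\textbf{Main obstacle.} The delicate ingredient is the lower semicontinuity of the perimeter functional with respect to $L^{1}$-convergence of sets. In Euclidean space this is the classical statement from the theory of sets of finite perimeter (De Giorgi). Here one has to transfer it to the weighted Riemannian manifold $(M,\mu)$; since $U$ is precompact and the density $\omega^{2}$ is smooth and bounded away from $0$ and $\infty$ on a neighborhood of $\overline{U}$, the reduction to the Euclidean statement goes through a partition of unity and local charts, but verifying this carefully—together with the coarea formula in the weighted setting—is where the bulk of the technical work sits.
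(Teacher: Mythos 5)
The paper does not prove this lemma; it is imported as a black box from Federer's \emph{Geometric Measure Theory}, so there is no internal argument to compare against. Your proof is a correct, self-contained reconstruction, and both halves are sound: the mollified-distance construction indeed produces an admissible monotone sequence whose gradients concentrate on the boundary (the re-indexing to large $n$ so that $u_n$ is smooth and the non-smoothness of $d(\cdot,\partial U)$ deep in $U$ is harmless since $u_n\equiv 1$ there), and the coarea plus lower-semicontinuity argument gives the matching lower bound for every admissible sequence.

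One point worth sharpening, which you partly flag yourself: the lower semicontinuity you invoke holds for the variational (De Giorgi) perimeter under $L^{1}$-convergence of indicators, \emph{not} directly for the Minkowski content $\mu^{+}$ as the paper defines it by a $\liminf$ of volume differences. The argument closes because for a.e.\ $t$ (Sard, intersected over the countably many $n$) the superlevel sets $E_{n,t}$ have smooth boundary, and for smooth-boundary precompact sets in a weighted Riemannian manifold the Minkowski content, the variational perimeter and the weighted hypersurface measure $\mu'(\partial\cdot)$ all coincide; similarly $\mu^{+}(U)=\operatorname{Per}(U)$ because $\partial U$ is smooth. Stating the chain $\mu^{+}(U)=\operatorname{Per}(U)\le\liminf_n\operatorname{Per}(E_{n,t})=\liminf_n\mu^{+}(E_{n,t})$ explicitly would make the step airtight, since for general (non-smooth) sets the two notions of perimeter can differ and Minkowski content alone is not $L^{1}$-lower-semicontinuous. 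With that clarification the proof is complete.
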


The proof of the following theorem follows the ideas of Theorem 1 in \cite{grigor1985isoperimetric}, where an isoperimetric inequality is obtained for Riemannian products $M=M_{1}\times M_{2}$ of two Riemannian manifolds $M_{1}$ and $M_{2}$.

\begin{thm}\label{thm1iso}
Let $(M_{1}, \mu_{1})$ and $(M_{2}, \mu_{2})$ be weighted manifolds and let the weighted manifold $(M, \mu)$ be defined as above, that is, the Riemannian metric on $M$ is defined by (\ref{riemmetriconM}) and measure $\mu$ is defined by (\ref{defofmeasonproduct}). Assume that there exists a constant $C_{0}>0$, such that for all $x\in M_{1}$, \begin{equation}\label{assumponpsi}\psi(x)\leq C_{0}.\end{equation}
Suppose that $(M_{1}, \mu_{1})$ and $(M_{2}, \mu_{2})$ have the lower isoperimetric functions $J_{1}$ and $J_{2}$, which are continuous on the intervals $(0, \mu_{1}(M_{1}))$ and $(0, \mu_{2}(M_{2}))$, respectively. Then $(M, \mu)$ admits the lower isoperimetric function $J$, defined by $$J(v)=c\inf_{\varphi, \phi}\left(\int_{0}^{\infty} {J_{1}(\varphi(t))dt}+\int_{0}^{\infty}{J_{2}(\phi(s))ds}.\right),$$ where $c=\frac{1}{2}\min\left\{1,\frac{1}{C_{0}}\right\}$ and $\varphi$ and $\phi$ are generalized mutually inverse functions such that \begin{equation}\label{varphipsileq}\varphi\leq \mu_{1}(M_{1}), \quad\phi\leq \mu_{2}(M_{2}),\end{equation} and \begin{equation}\label{intvarphipsieq}v=\int_{0}^{\infty} {\varphi(t) dt} = \int_{0}^{\infty}{\phi(s)ds}.\end{equation}
\end{thm}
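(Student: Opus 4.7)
The plan is to adapt to the warped setting the slicing/co-area argument used for Riemannian products in \cite{grigor1985isoperimetric}. The boundedness assumption $\psi\le C_{0}$ will serve only to decouple the two metric directions in the gradient.

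Fix a precompact open $U\subset M$ with smooth boundary, set $v=\mu(U)$, $F(y):=\mu_{1}(U_{y})$ and $G(x):=\mu_{2}(U^{x})$; Fubini gives $\int_{M_{2}}F\,d\mu_{2}=v=\int_{M_{1}}G\,d\mu_{1}$. By Lemma \ref{approxofboundary} pick a monotone increasing sequence $u_{n}\in C_{c}^{\infty}(M)$ with $u_{n}\nearrow 1_{U}$ and $\mu^{+}(U)=\liminf_{n}\int_{M}|\nabla u_{n}|\,d\mu$. Combining (\ref{identitygrad}) with $\psi\le C_{0}$ yields the pointwise bound
$$|\nabla u_{n}|\ge\max\bigl(|\nabla_{x}u_{n}|,\,C_{0}^{-1}|\nabla_{y}u_{n}|\bigr)\ge c\bigl(|\nabla_{x}u_{n}|+|\nabla_{y}u_{n}|\bigr),\qquad c=\tfrac{1}{2}\min(1,1/C_{0}),$$
so $\int_{M}|\nabla u_{n}|\,d\mu\ge c\int_{M}|\nabla_{x}u_{n}|\,d\mu+c\int_{M}|\nabla_{y}u_{n}|\,d\mu$.

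For each $y\in M_{2}$ the function $u_{n}(\cdot,y)$ lies in $C_{c}^{\infty}(M_{1})$, so the co-area formula and the isoperimetric inequality $\mu_{1}^{+}\ge J_{1}$ give $\int_{M_{1}}|\nabla_{x}u_{n}(\cdot,y)|\,d\mu_{1}\ge\int_{0}^{\infty}J_{1}(\mu_{1}\{u_{n}(\cdot,y)>s\})\,ds$. Integrating over $y$ by Fubini (using $\mu=\mu_{1}\times\mu_{2}$) and letting $n\to\infty$---the level set $\{u_{n}(\cdot,y)>s\}$ increases to $U_{y}$ for $s\in(0,1)$ and is empty for $s>1$, while $J_{1}$ is continuous with $J_{1}(0)=0$---one obtains $\liminf_{n}\int_{M}|\nabla_{x}u_{n}|\,d\mu\ge\int_{M_{2}}J_{1}(F)\,d\mu_{2}$. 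The symmetric argument gives $\liminf_{n}\int_{M}|\nabla_{y}u_{n}|\,d\mu\ge\int_{M_{1}}J_{2}(G)\,d\mu_{1}$. Adding the two inequalities yields the intermediate bound
$$\mu^{+}(U)\ge c\Bigl(\int_{M_{2}}J_{1}(F)\,d\mu_{2}+\int_{M_{1}}J_{2}(G)\,d\mu_{1}\Bigr).$$

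It remains to recognise the right-hand side as a mutually inverse pair. Let $\varphi$ be the right-continuous decreasing rearrangement of $F$ with respect to $\mu_{2}$; its generalized inverse $\phi$ from (\ref{defgeneinv}) then coincides with the distribution function $\phi(s)=\mu_{2}\{F>s\}$, so $(\varphi,\phi)$ is mutually inverse with $\varphi\le\mu_{1}(M_{1})$, $\phi\le\mu_{2}(M_{2})$, and $\int_{0}^{\infty}\varphi\,dt=\int_{M_{2}}F\,d\mu_{2}=v=\int_{0}^{\infty}\phi\,ds$ by layer-cake, so that (\ref{varphipsileq})-(\ref{intvarphipsieq}) hold. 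Equidistribution of $F$ and $\varphi=F^{*}$ instantly gives $\int_{0}^{\infty}J_{1}(\varphi(t))\,dt=\int_{M_{2}}J_{1}(F)\,d\mu_{2}$. The hard part, which is the heart of the theorem and is where the proof of \cite{grigor1985isoperimetric} is adapted most carefully, is the symmetric comparison
$$\int_{0}^{\infty}J_{2}(\phi(s))\,ds=\int_{0}^{\infty}J_{2}(\mu_{2}\{F>s\})\,ds\le\int_{M_{1}}J_{2}(G)\,d\mu_{1},$$
which does not follow from layer-cake alone because the $y$-distribution of $F$ and the $x$-distribution of $G$ are linked only through the geometry of $U$. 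I would establish it by decomposing $U$ along the horizontal slabs $M_{1}\times\{F>s\}$ and invoking the subadditivity/concavity enjoyed by a reasonable lower isoperimetric function $J_{2}$ (automatic if $J_{2}$ is concave with $J_{2}(0)=0$, as is the case in all applications in the paper). Once this comparison is in hand, $\mu^{+}(U)\ge c\bigl(\int_{0}^{\infty}J_{1}(\varphi)\,dt+\int_{0}^{\infty}J_{2}(\phi)\,ds\bigr)\ge J(v)$, which is the desired lower isoperimetric inequality.
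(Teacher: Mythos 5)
There is a genuine gap, and it is exactly the step you flag as ``the hard part.'' Your symmetric slicing produces
$$\mu^{+}(U)\ \ge\ c\Bigl(\int_{M_{2}}J_{1}(F)\,d\mu_{2}+\int_{M_{1}}J_{2}(G)\,d\mu_{1}\Bigr),\qquad F(y)=\mu_{1}(U_{y}),\ G(x)=\mu_{2}(U^{x}),$$
and then you need to dominate $\int_{0}^{\infty}J_{2}(\phi(s))\,ds=\int_{0}^{\infty}J_{2}\bigl(\mu_{2}\{F>s\}\bigr)\,ds$ by $\int_{M_{1}}J_{2}(G)\,d\mu_{1}$. This is not a layer-cake identity: the $\mu_{2}$-distribution of $F$ and the $\mu_{1}$-distribution of $G$ are \emph{different} pushforwards of $U$, coupled only through the geometry of the set. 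In fact the inequality can fail. Take $M_{1}=M_{2}=\mathbb{R}$ with Lebesgue measure and $U=(0,1)\times(0,1)\cup(2,3)\times(1,2)$; then $F\equiv 1$ on $(0,2)$, $G\equiv 1$ on $(0,1)\cup(2,3)$, $\phi\equiv 2$ on $(0,1)$, so the claim reads $J_{2}(2)\le 2J_{2}(1)$. This is subadditivity of $J_{2}$ at $(1,1)$, which the theorem's hypotheses (continuity of $J_{2}$ only) do not grant; the theorem would then be proved under an extra assumption it does not state. Moreover, even granting concavity, the ``decompose along horizontal slabs'' step is only sketched, and a slab decomposition of $U$ does not by itself yield the comparison, since it controls $\mu_{2}\{F>s\}$ but not the distribution of $G$.

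The paper avoids this entirely by \emph{not} slicing symmetrically. It slices only in the $M_{2}$-direction (giving $\liminf_{n}\int|\nabla_{y}f_{n}|\,d\mu\ge\int_{M_{1}}\mu_{2}^{+}(U_{x})\,d\mu_{1}\ge\int_{M_{1}}J_{2}(F(x))\,d\mu_{1}$ with $F(x):=\mu_{2}(U_{x})$), and for the $M_{1}$-direction it first \emph{averages}: setting $F_{n}(x)=\int_{M_{2}}f_{n}(x,y)\,d\mu_{2}(y)$ and using $\int_{M_{2}}|\nabla_{x}f_{n}|\,d\mu_{2}\ge|\nabla_{x}F_{n}|$, it applies the co-area formula and the isoperimetric inequality on $M_{1}$ to the superlevel sets of $F_{n}$, obtaining $\limsup_{n}\int|\nabla_{x}f_{n}|\,d\mu\ge\int_{0}^{\infty}J_{1}\bigl(\mu_{1}\{F>t\}\bigr)\,dt$. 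Now \emph{both} summands are expressed through the single function $F$. Setting $\varphi(t)=\mu_{1}\{F>t\}$, its generalized inverse $\phi$ is automatically equimeasurable with $F$, so $\int_{M_{1}}J_{2}(F)\,d\mu_{1}=\int_{0}^{\infty}J_{2}(\phi(t))\,dt$ and the bound reads off directly as $\int_{0}^{\infty}J_{1}(\varphi)\,dt+\int_{0}^{\infty}J_{2}(\phi)\,ds\ge I(v)$ with no extra comparison needed. That asymmetric treatment --- slice in one factor, average in the other --- is the essential device (already in \cite{grigor1985isoperimetric}) that your proposal is missing.

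Everything up to the intermediate bound is correct and matches the paper (the gradient decoupling using $\psi\le C_{0}$, the use of Lemma \ref{approxofboundary}, Sard, and Fatou), so the fix is local: replace the $\nabla_{x}$-slicing step by the averaging step and define $\varphi,\phi$ from $F(x)=\mu_{2}(U_{x})$ rather than from two unrelated section functions.
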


\begin{proof}
Let $U$ be an open precompact set in $M$ with smooth boundary such that $\mu(U)=v$. Let us define the function \begin{equation}\label{defvonisoonM}I(v)=\inf_{\varphi, \phi}\left(\int_{0}^{\infty} {J_{1}(\varphi(t))dt}+\int_{0}^{\infty}{J_{2}(\phi(s))ds}.\right),\end{equation} where $\varphi$ and $\phi$ are generalized mutually inverse functions satisfying (\ref{varphipsileq}) and (\ref{intvarphipsieq}). We need to prove that \begin{equation}\label{isomitJimproof}\mu^{+}(U)\geq cI(v),\end{equation} where $I$ is defined by (\ref{defvonisoonM}) and $c$ is defined as above. Let $\{f_{n}\}_{n\in \mathbb{N}}$ be a monotone increasing sequence of smooth non-negative functions on $M$ with compact support such that $f_{n}\to 1_{U}$ as $n\to \infty$. Note that by Lemma \ref{approxofboundary}, it suffices to show that  \begin{equation}\label{limitwithperimeter}\limsup_{n\to \infty}\int_{M}{|\nabla f_{n}|d\mu}\geq cI(v).\end{equation} By the identity (\ref{identitygrad}) and using (\ref{assumponpsi}), we have $$|\nabla f_{n}|^{2}=|\nabla_{x}f_{n}|^{2}+\frac{1}{\psi(x)^{2}}|\nabla_{y}f_{n}|^{2}\geq \frac{1}{2}\min\left\{1,\frac{1}{C_{0}}\right\}^{2}\left(|\nabla_{x}f_{n}|+|\nabla_{y}f_{n}|\right)^{2}.$$ Together with (\ref{limitwithperimeter}), it therefore suffices to prove that \begin{equation}\label{whattoshow}\limsup_{n\to \infty}\int_{M}{|\nabla_{x} f_{n}|d\mu}+\limsup_{n\to \infty}\int_{M}{|\nabla_{y} f_{n}|d\mu}\geq I(v).\end{equation} 
Let us first estimate the second summand on the left-hand side of (\ref{whattoshow}). For that purpose, consider for every $x\in M_{1}$, the section $$U_{x}=\{y\in M_{2}: (x, y)\in U\}.$$ By Sard's theorem, the set $U_{x}$ has smooth boundary for allmost all $x$. Considering the function $f_{n}(x, y)$ as a function on $M_{2}$ with fixed $x\in M_{1}$, we obtain by Lemma \ref{approxofboundary} for allmost all $x$, \begin{equation}\label{approxwithsecone} \liminf_{n\to \infty}\int_{M}{|\nabla_{y} f_{n}(x, y)|d\mu_{2}(y)}\geq\mu_{2}^{+}(U_{x}).\end{equation} Integrating this over $M_{1}$ and using Fatou's lemma, we deduce \begin{equation}\label{firstsumminproof}\liminf_{n\to \infty}\int_{M}{|\nabla_{y} f_{n}|d\mu}\geq \int_{M_{1}}{\mu_{2}^{+}(U_{x})d\mu_{1}(x)}.\end{equation}
The first summand on the left-hand side of (\ref{whattoshow}) could be estimated analogously, but instead, we will estimate it using the assumption that $(M_{1}, \mu_{1})$ and $(M_{2}, \mu_{2})$ admit lower isoperimetric functions $J_{1}$ and $J_{2}$, respectively. First, by Fubini's formula, we have \begin{equation}\label{startsecsuminproof}
\int_{M}{|\nabla_{x}f_{n}|d\mu}=\int_{M_{1}}{\int_{M_{2}}{|\nabla_{x}f_{n}|d\mu_{2}}d\mu_{1}}\geq \int_{M_{1}}{\left|\nabla_{x}\int_{M_{2}}{f_{n}(x, y)d\mu_{2}(y)}\right|d\mu_{1}(x)}.\end{equation}
Now let us consider on $M_{1}$ the function $$F_{n}(x)=\int_{M_{2}}{f_{n}(x, y)d\mu_{2}(y)}.$$
Note that $F_{n}(x)$ is a monotone increasing sequence of non-negative smooth functions on $M_{1}$, such that \begin{equation}\label{limitofFn}F(x):=\lim_{n\to \infty}F_{n}(x)=\mu_{2}(U_{x}).\end{equation} Since $F_{n}$ is smooth for all $n$, we deduce that the sets $\{F_{n}>t\}$ have smooth boundary, so that we can apply the isoperimetric inequality on $M_{1}$, that is, $$\mu_{1}^{+}\{F_{n}>t\}\geq  J_{1}(\mu_{1}\{F_{n}>t\}).$$ Hence, we obtain, using (\ref{startsecsuminproof}) and the co-area formula, \begin{align*}\int_{M}{|\nabla_{x}f_{n}|d\mu}\geq \int_{M_{1}}{|\nabla_{x}F_{n}|d\mu_{1}}&=\int_{0}^{\infty}{\mu_{1}'\{F_{n}=t\}dt}\\&=\int_{0}^{\infty}{\mu_{1}^{+}\{F_{n}>t\}dt}\\&\geq \int_{0}^{\infty}{J_{1}(\mu_{1}\{F_{n}>t\})dt}.\end{align*}
Passing to the limit as $n\to \infty$, we get by Fatou's lemma, using the continuiuty of $J_{1}$, \begin{equation}\label{secondsuminproof}\limsup_{n\to \infty}\int_{M}{|\nabla_{x}f_{n}|d\mu}\geq \int_{0}^{\infty}{J_{1}(\mu_{1}\{F>t\})dt}.\end{equation}
By the isoperimetric inequality on $M_{2}$ with function $J_{2}$ and by (\ref{limitofFn}), $$\mu_{2}^{+}(U_{x})\geq J_{2}(\mu_{2}(U_{x}))=J_{2}(F(x)),$$ whence combining this with (\ref{firstsumminproof}) and (\ref{secondsuminproof}), we get \begin{equation}\label{combiningbothsum}\limsup_{n\to \infty}\int_{M}{|\nabla_{x} f_{n}|d\mu}+\limsup_{n\to \infty}\int_{M}{|\nabla_{y} f_{n}|d\mu}\geq \int_{M_{1}}{J_{2}(F(x))d\mu_{1}(x)}+\int_{0}^{\infty}{J_{1}(\mu_{1}\{F>t\})dt}.\end{equation}
Let us set $$\varphi(t)=\mu_{1}\{F>t\}$$ and note that $\varphi$ is monotone decreasing and right-continuous. Let $\phi$ be the generalized inverse function to $\varphi$ defined by (\ref{defgeneinv}). Then we obtain by (\ref{itselfgeneralized}), \begin{equation}\label{equimeaspsiF}\sup\{s>0:\phi(s)>t\}=\mu_{1}\{F>t\},\end{equation} which means that $\phi$ and $F$ are equimeasurable. Therefore, $$\varphi\leq \mu_{1}(M_{1}),\quad \phi\leq \mu_{2}(M_{2}),$$  and by (\ref{genehavesameint}), the definition of $\varphi$ and Fubini's formula, $$\int_{0}^{\infty} {\phi(t) dt} = \int_{0}^{\infty}{\varphi(t)dt}=\int_{M_{1}}{Fd\mu_{1}}=\mu(U)=v.$$ Hence, the pair $\varphi$, $\phi$ satisfies the condition in (\ref{intvarphipsieq}). Note that by (\ref{equimeaspsiF}), $$\int_{M_{1}}{J_{2}(F(x))d\mu_{1}(x)}=\int_{0}^{\infty}{J_{2}(\phi(t))dt},$$ whence we obtain for the right-hand side of (\ref{combiningbothsum}), $$\int_{M_{1}}{J_{2}(F(x))d\mu_{1}(x)}+\int_{0}^{\infty}{J_{1}(\mu_{1}\{F>t\})dt}=\int_{0}^{\infty}{J_{2}(\phi(t))dt}+\int_{0}^{\infty} {J_{1}(\varphi(t))dt}\geq I(v),$$ which proves (\ref{whattoshow}) and thus, finishes the proof.
\end{proof}

Let $P>0$. Given two non-negative functions $f$ on $(0, +\infty)$ and $g$ on $(0, P)$ define the function $h$ on $(0, +\infty)$ by \begin{equation}\label{defvonhinineq}h(v)=\inf_{\varphi, \phi}\left(\int_{0}^{\infty} {f(\varphi(t))dt}+\int_{0}^{\infty}{g(\phi(s))ds}.\right),\end{equation} where $\varphi$ and $\phi$ are generalized mutually inverse functions on $(0, +\infty)$ such that \begin{equation}\label{gleichheitderint}\int_{0}^{\infty} {\varphi(t) dt} = \int_{0}^{\infty}{\phi(s)ds}=v.\end{equation} and with the condition that $\phi< P$.

\begin{lemma}\label{afunctiineq}
Let $f$ and $g$ be continuous functions on the intervals $(0, +\infty)$ and $(0, P)$, respectively and suppose that $g$ is symmetric with respect to $\frac{1}{2}P$.
Also, assume that the functions $\frac{f(x)}{x}$ and $\frac{g(y)}{y}$ are monotone decreasing while the functions $f$ and $g$ are monotone increasing on the intervals $(0, +\infty)$ and $\left(0, \frac{P}{2}\right)$, respectively. Then, for any $v>0$, \begin{equation}\label{statementoffuncineq}h(v)\geq \min\left(\frac{1}{6}h_{0}(v),\frac{1}{8} f\left(\frac{v}{P}\right)P \right),\end{equation} where the function $h_{0}$ is defined for all $v>0$, by \begin{equation}\label{defvonhnull}h_{0}(v)=\inf_{\overset{xy=v}{x>0,~0<y\leq\frac{1}{2}P}}(f(x)y+g(y)x).\end{equation} 
\end{lemma}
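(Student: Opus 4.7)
The plan is to split the proof on the quantity $s_1:=\varphi(P/2)$, which detects how much of $\phi$ lies above $P/2$. Before doing so, I will record two scaling properties of $h_0$ used throughout: $h_0$ is non-decreasing in $v$, and $h_0(2v)\le 2h_0(v)$. Both are immediate from $f(x)/x$ being decreasing and $g$ being increasing on $(0,P/2)$: for the first, rescale $y$ of a near-optimal pair by the factor $v/v'\le 1$; for the second, replace $x_0$ by $2x_0$ and use $f(2x_0)\le 2f(x_0)$.

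\medskip
\noindent \textbf{Case $s_1\ge v/(2P)$.} Since $\phi<P$ and $s_1>0$, $\varphi$ must satisfy $\varphi(t)\ge s_1$ on $(0,P/2]$, so
\[
\int_0^\infty f(\varphi(t))\,dt\;\ge\;\tfrac{P}{2}f(s_1).
\]
Applying $f(x)/x$ decreasing to the ratio $s_1/(v/P)\ge 1/2$ yields $f(s_1)\ge \tfrac12 f(v/P)$; hence the integral is at least $\tfrac14 Pf(v/P)\ge \tfrac18 Pf(v/P)$, matching the second alternative of \eqref{statementoffuncineq}.

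\medskip
\noindent \textbf{Case $s_1<v/(2P)$: truncation.} Set $\tilde\phi(s):=\phi(s+s_1)$ and $\tilde\varphi(t):=(\varphi(t)-s_1)_+$. A short check using \eqref{defgeneinv} shows that $(\tilde\varphi,\tilde\phi)$ is again a generalized mutually inverse pair, with $\tilde\phi\le P/2$ and common integral $\tilde v=v-\int_0^{s_1}\phi(s)\,ds$. Since $\phi<P$, one has $\int_0^{s_1}\phi<s_1 P<v/2$, so $\tilde v>v/2$. Pointwise, $\varphi\ge\tilde\varphi$ and $f$ is increasing, so $\int f(\varphi)\ge\int f(\tilde\varphi)$; by translating the variable, $\int g(\phi)\ge\int g(\tilde\phi)$. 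Hence it suffices to prove the Key Lemma below for the truncated pair, and then combining with monotonicity and doubling $h_0(\tilde v)\ge h_0(v/2)\ge\tfrac12 h_0(v)$ yields $\int f(\varphi)+\int g(\phi)\ge\tfrac16 h_0(v)$, matching the first alternative of \eqref{statementoffuncineq}.

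\medskip
\noindent \textbf{Key Lemma and the main obstacle.} It remains to show that, if $\phi\le P/2$, then $\int f(\varphi)+\int g(\phi)\ge\tfrac13 h_0(v)$. The principal tool is the layer-cake identity $\int f(\varphi)\,dt=\int_0^\infty f'(s)\phi(s)\,ds$ and its analogue for $g$. For any $y^*\in(0,P/2]$ with $x^*:=\varphi(y^*)$, splitting each integral at the thresholds $x^*$ and $y^*$---using $\phi(s)\ge y^*$ for $s<x^*$ and $\varphi(t)\ge x^*$ for $t<y^*$---will give the decomposition
\[
\int f(\varphi)\,dt+\int g(\phi)\,ds = f(x^*)y^*+g(y^*)x^* + S_1+S_2+S_3+S_4,
\]
with four non-negative ``surpluses'' $S_i$ accounting for the graph of $\varphi$ outside the rectangle $[0,y^*]\times[0,x^*]$; the sign of $S_4=\int_{y^*}^\infty g'(t)\varphi(t)\,dt$ uses that $\phi\le P/2$ forces the support of $\varphi$ into $[0,P/2]$, where $g'\ge 0$. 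Choosing $y^*=y_0$ for a near-minimizer $(x_0,y_0)$ of $h_0(v)$ and writing $r:=\varphi(y_0)/x_0\in(0,1]$ (Markov), the inequality $f(x^*)\ge rf(x_0)$ from $f(x)/x$ decreasing gives $\int f(\varphi)+\int g(\phi)\ge r(f(x_0)y_0+g(y_0)x_0)\ge r h_0(v)$. The subcase $r\ge 1/3$ yields the bound directly. The main obstacle is the complementary subcase $r<1/3$, in which the missing area $v-y_0\varphi(y_0)>2v/3$ is distributed over the surpluses and one must prove a lower bound of the form $S_2+S_3\gtrsim(1-r)(f(x_0)y_0+g(y_0)x_0)$ by applying the scaling of $f(x)/x$ and $g(y)/y$ to the upper-left region of the graph of $\varphi$.
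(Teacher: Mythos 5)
Your outer framework — establishing that $h_0$ is non-decreasing with $h_0(2v)\le 2h_0(v)$, splitting on $s_1=\varphi(P/2)$, handling $s_1\ge v/(2P)$ directly, and truncating via $\tilde\varphi=(\varphi-s_1)_+$, $\tilde\phi(s)=\phi(s+s_1)$ to reduce to the situation $\phi\le P/2$ — is correct and is a reasonable alternative to the geometric area decomposition $v=|\Phi_p|+|\Psi_q|+pq$ used in the paper. But the proof is not complete: everything is deferred to the ``Key Lemma'' ($\phi\le P/2$ implies $S\ge\frac13 h_0(v)$), and you explicitly concede that the subcase $r<1/3$ of its proof is open. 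This is a genuine gap, not a detail. The underlying problem is the choice of $y^*$ as a near-minimizer $y_0$ of $h_0(v)$: that choice is dictated by $f$ and $g$ alone and carries no information about the particular $(\varphi,\phi)$ one is trying to bound. In fact $\varphi(y_0)$ can be $0$ (take $\varphi$ supported on a short interval to the left of $y_0$), so $r=\varphi(y_0)/x_0$ may be $0$, the rectangle term $f(x^*)y^*+g(y^*)x^*$ vanishes, and the entire mass of $S$ sits in the surpluses $S_1,\dots,S_4$; bounding those below by a constant times $h_0(v)$ is then exactly the original problem, with nothing gained from the case split on $r$.

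The paper avoids this by choosing the test point from $\phi$ rather than from $h_0$: it first fixes $q$ by the condition $|\Psi_q|=\tfrac13 v$ (so $\int_q^\infty\phi=\tfrac13 v$), sets $p=\phi(q)$, and then uses the point $(x,y)=(3q,\,v/(3q))$ as a candidate in the infimum defining $h_0$. Because $\phi\le p$ on $[q,\infty)$ and $\varphi\le q$ on $[p,\infty)$, the monotonicity of $g(y)/y$ and $f(x)/x$ directly yield $\int g(\phi)\ge\frac13 xg(y)$ and $\int f(\varphi)\ge\frac13 f(x)y$, with the remaining cases ($|\Phi_p|<\tfrac13 v$, then subcases on whether $p$ is comparable to $P$ and whether $\varphi(P/2)$ is comparable to $q$) handled similarly. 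If you want to keep your truncation-to-$\phi\le P/2$ reduction, the Key Lemma should be proved by this same device: pick $q$ with $\int_q^\infty\phi=\tfrac13 v$, set $p=\phi(q)\le P/2$, and feed $(3q,v/(3q))$ into $h_0$, adding a short argument for the case $v/(3q)>P/2$ (e.g.\ replacing it by $P/2$ and using $f(x)/x$ decreasing). Choosing $y^*$ from the minimizer of $h_0$ is the wrong starting point and I do not see how to close the $r<1/3$ gap along the lines you indicate.
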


\begin{rem}\normalfont
A similar functional inequality was stated in [\cite{grigor1985isoperimetric}, Theorem 2a] without proof.
\end{rem}

In the following we denote by $|A|$ the Lebsgue measure of a domain $A\subset \mathbb{R}^{2}$.

\begin{proof}
By an approximation argument, we can assume that $\varphi$ is strictly monotone decreasing and continuous on $(0, P)$ so that $\phi$, defined as above, is the inverse function of $\varphi$ on $(0, +\infty)$. Note that $\phi$ is then also strictly monotone decreasing and continuous and still satisfies $\phi<P$.
For such fixed $\varphi, \phi$, let us denote \begin{equation}\label{defvonI}S=\int_{0}^{\infty} {f(\varphi(t))dt}+\int_{0}^{\infty}{g(\phi(s))ds},\end{equation} so that it suffices to prove that \begin{equation}\label{whattoshowwithI}S\geq \min\left(\frac{1}{6}h_{0}(v), \frac{1}{8}f\left(\frac{v}{P}\right)P \right),\end{equation} where $h_{0}$ is defined by (\ref{defvonhnull}). 

For any $p\in (0, T)$, consider the domain $$\Phi_{p}=\{(t, s)\in \mathbb{R}^{2}:p\leq t< P, ~0\leq s \leq \varphi(t)\}$$ and for any $q>0$ the domain $$\Psi_{q}=\{(t, s)\in \mathbb{R}^{2}:s\geq q, ~0\leq t \leq \phi(s)\},$$ so that by construction, \begin{equation}\label{partitionofset}v=\int_{0}^{\infty} {\phi(s) ds}=|\Phi_{p}|+|\Psi_{q}|+pq.\end{equation} Since $\phi$ is strictly monotone decreasing and continuous, there exists $q>0$ such that $|\Psi_{q}|= \frac{1}{3}v$. Let us set $p=\phi(q)$. The proof will be split into two main cases.

\textbf{Case 1}. Let us assume that $$|\Phi_{p}|\geq \frac{1}{3}v.$$ Then we obtain by (\ref{partitionofset}), that $p\leq \frac{1}{3q}v$. By the monotonicity of $\frac{g(y)}{y}$, we therefore get \begin{align*}\int_{0}^{\infty} {g(\phi(s))ds}\geq \frac{1}{3}xg(y),\end{align*} where $x=3q$ and $y=\frac{1}{3q}v$ and similarly, \begin{align}\nonumber\int_{0}^{\infty}{f(\varphi(t))dt}\geq \frac{1}{3}f(x)y.\end{align} Hence, we obtain that $$S\geq \frac{1}{3}h_{0}(v).$$

\textbf{Case 2}. Let us now assume that $$|\Phi_{p}|< \frac{1}{3}v.$$ Then we can decrease $p$ such that $|\Phi_{p}|=\frac{1}{3}v$. Set $q=\varphi(p)$ and note that this $q$ is larger than the $q$ from Case 1, whence $$|\Psi_{q}|\leq\frac{1}{3}v,$$ so that (\ref{partitionofset}) implies \begin{equation}\label{largepq}\frac{1}{3}\leq pq\leq\frac{2}{3}v.\end{equation}

\textbf{Case 2a}. Assume further that $p\geq \frac{1}{4}P.$ It follows that $$\int_{0}^{\infty}{f(\varphi(t))dt}\geq\frac{1}{3}\frac{f(q)}{q}v$$ and since $f$ is monotone increasing, we conclude $$S\geq \frac{P}{8}f\left(\frac{v}{P}\right),$$ which proves (\ref{whattoshowwithI}).

\textbf{Case 2b}. Assume now that $p<\frac{1}{4}P$ and set $q_{0}=\varphi\left(\frac{1}{2}P\right)$.

\textbf{Case 2b(i)}. Let us first consider the case when $q_{0}\leq \frac{1}{2}q$. Using that $g(y)$ is monotone increasing on $\left(0, \frac{P}{2}\right)$, we obtain, $$\int_{0}^{\infty}{g(\psi(s))ds}\geq \frac{1}{2}g(p)q.$$ Together with $$\int_{0}^{\infty}{f(\varphi(t))dt}\geq f(q)p,$$ we deduce $$S\geq\frac{1}{2}g(p)q+f(q)p,$$ so that setting $x=\frac{v}{p}$ and $y=p$, yields \begin{align*}S\geq\frac{1}{6}\left(f(x)y+g(y)x\right)\geq h_{0}(v).\end{align*}

\textbf{Case 2b(ii)}. Finally, let us consider the case when $q_{0}>\frac{1}{2}q$. Note that the condition that $\frac{f(x)}{x}$ is monotone decreasing, implies that for any $\lambda\in (0, 1)$, $$f(\lambda x)\geq \lambda f(x).$$ Together with the monotonicity of $f$, we therefore obtain $$\int_{0}^{P/2}{f(\varphi(t))dt}\geq f(q)\frac{P}{4},$$ which yields $$S\geq f\left(\frac{v}{P}\right)\frac{P}{4},$$ and thus, proves (\ref{whattoshowwithI}) also in this case.
\end{proof}

\begin{cor}\label{thmforapp}
In the situation of Theorem \ref{thm1iso} suppose that $$\mu_{1}(M_{1})=\infty\quad\text{and}\quad\mu_{2}(M_{2})<\infty$$ and assume that $\frac{J_{1}(x)}{x}$ and $\frac{J_{2}(y)}{y}$ are monotone decreasing while the functions $J_{1}$ and $J_{2}$ are monotone increasing on the intervals $(0, +\infty)$ and $\left(0, \frac{1}{2}\mu_{2}(M_{2})\right)$, respectively. Then the manifold $(M, \mu)$ admits the lower isoperimetric function \begin{equation}\label{isoperiineqinthm2a}J(v)= c \min\left(\frac{1}{6}J_{0}(v), \frac{1}{8}J_{1}\left(\frac{v}{\mu_{2}(M_{2})}\right)\mu_{2}(M_{2}) \right),\end{equation} where function $J_{0}$ is defined for all $v>0$, by \begin{equation}\label{defvonJo}J_{0}(v)=\inf_{\overset{xy=v}{x>0,~0<y\leq\frac{1}{2}\mu_{2}(M_{2})}}(J_{1}(x)y+J_{2}(y)x),\end{equation} and the constant $c$ is defined as in Theorem \ref{thm1iso}.
\end{cor}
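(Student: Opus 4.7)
The plan is to combine Theorem~\ref{thm1iso} with the purely analytic Lemma~\ref{afunctiineq}. First, I would invoke Theorem~\ref{thm1iso} to obtain the lower isoperimetric function
\[
\widehat{J}(v)=c\inf_{\varphi,\phi}\left(\int_{0}^{\infty}J_{1}(\varphi(t))\,dt+\int_{0}^{\infty}J_{2}(\phi(s))\,ds\right),
\]
where the infimum ranges over generalized mutually inverse pairs with $\varphi\le\mu_{1}(M_{1})$, $\phi\le\mu_{2}(M_{2})$, and $\int\varphi=\int\phi=v$. Since $\mu_{1}(M_{1})=\infty$, the first constraint is vacuous, and the objective is precisely the functional $h(v)$ from Lemma~\ref{afunctiineq} specialized to $f=J_{1}$, $g=J_{2}$, $P=\mu_{2}(M_{2})$.

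Next, I would verify the hypotheses of Lemma~\ref{afunctiineq}. Continuity of $J_{1},J_{2}$ and the monotonicity of $J_{1},J_{2}$ and of $J_{1}(x)/x, J_{2}(y)/y$ are given directly. The only nontrivial hypothesis is the symmetry of $J_{2}$ about $P/2$. To obtain it, I would observe that on a finite-measure manifold one has $\mu_{2}^{+}(U)=\mu_{2}^{+}(M_{2}\setminus U)$ for any precompact open $U\subset M_{2}$ with smooth boundary. Consequently, if $J_{2}$ is a lower isoperimetric function on $(0,P/2)$, we may extend it to $(P/2,P)$ by setting $J_{2}(y)=J_{2}(P-y)$ without losing the isoperimetric property, and without destroying continuity, the monotonicity of $J_{2}$ on $(0,P/2)$, or the monotonicity of $J_{2}(y)/y$ on $(0,P/2)$.

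Applying Lemma~\ref{afunctiineq} then yields
\[
h(v)\geq\min\left(\tfrac{1}{6}J_{0}(v),\;\tfrac{1}{8}J_{1}(v/\mu_{2}(M_{2}))\,\mu_{2}(M_{2})\right),
\]
with $J_{0}$ as in \eqref{defvonJo}. Multiplying by $c$ and combining with Theorem~\ref{thm1iso} gives $\mu^{+}(U)\ge\widehat{J}(\mu(U))\ge J(\mu(U))$, which is the desired isoperimetric inequality \eqref{isoperiineqinthm2a}.

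The main (and essentially only) subtlety is the symmetrization of $J_{2}$ about $P/2$: one must argue that after replacing $J_{2}$ by its symmetric extension, all four structural properties required by Lemma~\ref{afunctiineq} genuinely persist. A secondary minor point is the mismatch between the constraint $\phi\le P$ in Theorem~\ref{thm1iso} and $\phi<P$ in Lemma~\ref{afunctiineq}; this is harmless because the symmetric extension forces $J_{2}(P)=J_{2}(0)=0$, so allowing $\phi(s)=P$ on a set of positive measure contributes nothing to the integral, and the two infima coincide. Once these technicalities are dispatched, the corollary reduces to a mechanical chaining of the two earlier results.
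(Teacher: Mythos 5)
Your proposal follows the paper's own argument: invoke Theorem~\ref{thm1iso}, observe that $\mu_{1}(M_{1})=\infty$ makes the constraint $\varphi\le\mu_{1}(M_{1})$ vacuous, symmetrize $J_{2}$ about $\frac{1}{2}\mu_{2}(M_{2})$ using $\mu_{2}^{+}(U)=\mu_{2}^{+}(M_{2}\setminus U)$, and then apply Lemma~\ref{afunctiineq} with $f=J_{1}$, $g=J_{2}$, $P=\mu_{2}(M_{2})$. Your additional remarks on verifying that the symmetric extension preserves the hypotheses of the lemma and on reconciling $\phi\le P$ with $\phi<P$ address minor technical points that the paper leaves implicit, but the proof strategy is the same.
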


\begin{proof}
From Theorem \ref{thm1iso}, we know that $(M, \mu)$ has the lower isoperimetric function $cI$, where $I$ is defined by $$I(v)=\inf_{\varphi, \phi}\left(\int_{0}^{\infty} {J_{1}(\varphi(t))dt}+\int_{0}^{\infty}{J_{2}(\phi(s))ds}.\right),$$ where $\varphi$ and $\phi$ are generalized mutually inverse functions satisfying $\phi\leq \mu_{2}(M_{2})$ and the condition in (\ref{gleichheitderint}). Since $\mu_{2}(M_{2})$ is finite, we can assume that the isoperimetric function $J_{2}$ is symmetric with respect to $\frac{1}{2}\mu_{2}(M_{2})$, because the boundaries of an open set and its complement coincide in this case. Applying Lemma \ref{afunctiineq} to $I$ with $f=J_{1}$, $g=J_{2}$ and $P=\mu_{2}(M_{2})$, we obtain $$I(v)\geq \min\left(\frac{1}{6}J_{0}(v),\frac{1}{8} J_{1}\left(\frac{v}{\mu_{2}(M_{2})}\right)\mu_{2}(M_{2}) \right),$$ where function $J_{0}$ is defined by (\ref{defvonJo}), which implies that function $J$ given by (\ref{isoperiineqinthm2a}) is a lower isoperimetric function for $(M, \mu)$.
\end{proof}

\subsection{Weighted models with boundary}\label{secisobdry}

Let us also consider the topological space $M=\mathbb{R}_{+}\times \mathbb{S}^{n-1}$, $n\geq 2$, where $\mathbb{R}_{+}=[0, +\infty)$, so that any point $x\in M$ can be written in the polar form $x=(r, \theta)$ with $r\in\mathbb{R}_{+}$ and $\theta \in \mathbb{S}^{n-1}$. We equip $M$ with the Riemannian metric $ds^{2}$ that is defined in polar coordinates $(r, \theta)$ by $$ds^{2}=dr^{2}+\psi^{2}(r)d\theta^{2}$$ with $\psi(r)$ being a smooth positive function on $\mathbb{R}_{+}$ and $d\theta^{2}$ being the Riemannian metric on $\mathbb{S}^{n-1}$.
Note that $M$ with this metric becomes a manifold with boundary $$\delta M=\{(r, \theta)\in M: r=0\}$$ and we call $M$ in this case a \textit{Riemannian model with boundary}.
The Riemannian measure $\mu$ on $M$ with respect to this metric is given by \begin{equation}\label{measonmodpsi}d\mu=\psi^{n-1}(r)drd\sigma(\theta),\end{equation} where $dr$ denotes the Lebesue measure on $\mathbb{R}_{+}$ and $d\sigma$ denotes the Riemannian measure on $\mathbb{S}^{1}$. 
Let us normalize the metric $d\theta^{2}$ on $\mathbb{S}^{n-1}$ so that $\sigma(\mathbb{S}^{n-1})=1$ and define the area function $S$ on $\mathbb{R}_{+}$ by \begin{equation}\label{defofareabd}S(r)=\psi^{n-1}(r).\end{equation}

Given a smooth positive function $h$ on $M$, that only depends on the polar radius $r$, and a measure $\widetilde{\mu}$ on $M$ defined by $d\widetilde{\mu}=h^{2}d\mu$, we obtain that the weighted manifold $(M, \widetilde{\mu})$ has the area function \begin{equation}\label{areafuncontild}\widetilde{S}(r)=h^{2}(r)S(r).\end{equation} Then the weighted manifold $(M, \widetilde{\mu})$ is called \textit{weighted model} and we get that \begin{equation}\label{fubinionmode}d\widetilde{\mu}=\widetilde{S}(r)drd\sigma(\theta).\end{equation}

\begin{thm}\label{propisoformod}
Let $(M_{0}, \mu_{0})$ be a model manifold with boundary.
Assume that there exists a constant $C_{0}>0$ such that for all $r\geq 0$, \begin{equation}\label{smallpsiass}\psi_{0}(r)\leq C_{0}.\end{equation} Assume also, that \begin{equation}\label{assonareatilde}\widetilde{S_{0}}(r)\simeq \left\{ \begin{array}{lc} r^{\delta}e^{r^{\alpha}},& r\geq 1, \\1,& r<1,\end{array}\right.\end{equation} where $\delta\in \mathbb{R}$ and $\alpha\in (0, 1]$.
Then the weighted model $(M_{0}, \widetilde{\mu_{0}})$ admits the lower isoperimetric function $J$ defined by \begin{equation}\label{lowerisoonM0}J(w)=\widetilde{c} \left\{ \begin{array}{lc} \frac{w}{(\log w)^{\frac{1-\alpha}{\alpha}}},& w\geq 2, \\c'w^{\frac{n-1}{n}},& w<2,\end{array}\right.\end{equation} where $\widetilde{c}$ is a small enough constant and $c'$ is a positive constant chosen such that $J$ is continuous.
\end{thm}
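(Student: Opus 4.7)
The plan is to realize $(M_0, \widetilde{\mu}_0)$ as a warped product in the sense of Theorem~\ref{thm1iso} and apply Corollary~\ref{thmforapp}. I take $(M_1, \mu_1) := (\mathbb{R}_+, \widetilde{S}_0(r)\,dr)$ and $(M_2, \mu_2) := (\mathbb{S}^{n-1}, \sigma)$. By the Fubini formula (\ref{fubinionmode}), $d\widetilde{\mu}_0 = d\mu_1 \times d\mu_2$, and the metric $dr^2 + \psi_0^2(r)d\theta^2$ fits the framework of Theorem~\ref{thm1iso} with warping function $\psi = \psi_0$. Hypothesis (\ref{smallpsiass}) gives $\psi_0 \leq C_0$; the asymptotics in (\ref{assonareatilde}) give $\mu_1(M_1) = \infty$, while the normalization yields $\mu_2(M_2) = 1$. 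Hence Corollary~\ref{thmforapp} applies once one exhibits isoperimetric functions $J_1$, $J_2$ with the required monotonicities.

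Next, I compute $J_1$. A one-dimensional rearrangement shows that the perimeter of a set $U \subset \mathbb{R}_+$ of $\mu_1$-measure $v$ is (asymptotically) minimized by the initial segment $U = [0, b(v))$, where $b(v) := \Phi^{-1}(v)$ and $\Phi(b) := \int_0^b \widetilde{S}_0(r)\,dr$, giving $J_1(v) \simeq \widetilde{S}_0(b(v))$. For $v < 2$ one has $b(v) = O(1)$ and hence $J_1(v) \simeq 1$. For $v \geq 2$, integration by parts applied to $\Phi$, together with (\ref{assonareatilde}), produces the Laplace-type asymptotic $\Phi(b) \simeq \alpha^{-1} b^{\delta - \alpha + 1} e^{b^\alpha}$; inverting yields $b(v) \simeq (\log v)^{1/\alpha}$ and therefore
\[
J_1(v) \simeq \widetilde{S}_0(b(v)) \simeq \frac{v}{b(v)^{1-\alpha}} \simeq \frac{v}{(\log v)^{(1-\alpha)/\alpha}}.
\]
For $(M_2, \mu_2)$, the classical spherical isoperimetric inequality supplies a continuous $J_2$, symmetric about $1/2$, monotone increasing on $(0, 1/2)$, with $J_2(y) \simeq y^{(n-2)/(n-1)}$ near $0$. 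The monotonicity of $J_i$ on the appropriate intervals and of $J_i(\cdot)/\mathrm{id}$ required by Corollary~\ref{thmforapp} are immediate from these expressions (replacing $J_1$, if necessary, by a monotone minorant).

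Corollary~\ref{thmforapp} then yields $J(v) = c\,\min\bigl(\tfrac{1}{6}J_0(v),\ \tfrac{1}{8}J_1(v)\bigr)$ with $J_0(v) = \inf_{xy = v,\, 0 < y \leq 1/2}(J_1(x)y + J_2(y)x)$, and the two regimes must be extracted separately. For $v \geq 2$: since $J_2(y)/y$ is decreasing on $(0, 1/2]$, the term $J_2(y)x = v\cdot J_2(y)/y$ is bounded below by $2 J_2(1/2)\,v$, so $J_0(v) \geq c_1 v$ for some $c_1 > 0$; combined with $J_1(v)/8 \simeq v/(\log v)^{(1-\alpha)/\alpha}$, which is asymptotically much smaller than $v$, the minimum is realized by $J_1(v)/8$, producing the large-volume case. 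For $v < 2$: the balanced choice $x \simeq v^{1/n}$, $y \simeq v^{(n-1)/n}$ makes $xy = v$ with $y \leq 1/2$ and gives $J_1(x)y + J_2(y)x \simeq v^{(n-1)/n}$; the matching lower bound follows by minimizing $c_0 y + c_1 v\, y^{-1/(n-1)}$ in $y > 0$, which is $\simeq v^{(n-1)/n}$. Since this is asymptotically much smaller than $J_1(v) \simeq 1$, the minimum equals $J_0(v)/6$. Absorbing all positive constants into $\widetilde{c}$ and selecting $c'$ to enforce continuity of $J$ at $w = 2$ then delivers (\ref{lowerisoonM0}).

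The main obstacle is the sharp determination of $J_1$: one must identify initial segments as the one-dimensional isoperimetric optimizers on $(\mathbb{R}_+, \widetilde{S}_0\,dr)$ (a minor issue since $\widetilde{S}_0$ is only eventually monotone) and track the polynomial prefactor $b^\delta$ through the Laplace asymptotic of $\Phi$ so that it cancels correctly against $b^{\delta - \alpha + 1}$, producing precisely the $(\log v)^{(1-\alpha)/\alpha}$ correction.
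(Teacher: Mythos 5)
Your proof is correct and follows essentially the same route as the paper: decompose $(M_0,\widetilde{\mu_0})$ as the warped product $(\mathbb{R}_+,\widetilde{S_0}\,dr)\times(\mathbb{S}^{n-1},\sigma)$, compute the half-line isoperimetric profile $J_1(v)\simeq v/(\log v)^{(1-\alpha)/\alpha}$ via the weighted one-dimensional result (the paper cites Brock--Chiacchio--Mercaldo where you invoke rearrangement, but the content is the same), verify the monotonicity hypotheses, and apply Corollary \ref{thmforapp}, splitting into the regimes $v\geq 2$ and $v<2$ to simplify $\min(J_0/6,J_1/8)$. Your bookkeeping in both regimes matches the paper's (the paper works with $K=J/\mathrm{id}$ rather than $J$ directly, but the optimizations are identical), and you correctly flag the one genuine subtlety, namely that the cited one-dimensional isoperimetric result needs $\widetilde{S_0}$ non-decreasing, which the hypothesis $\widetilde{S_0}\simeq\cdots$ guarantees only up to replacing $\widetilde{S_0}$ by a comparable monotone function.
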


\begin{proof}
Let $\nu$ be the measure on $\mathbb{R}_{+}$ defined by $d\nu(r)=\widetilde{S_{0}}(r)dr$. Then (\ref{fubinionmode}) implies that measure $\widetilde{\mu_{0}}$ has the representation $\widetilde{\mu_{0}}=\nu\times \sigma$, where $\sigma$ is the normalized Riemannian measure on the sphere $\mathbb{S}^{n-1}$. Obviously, we have by (\ref{assonareatilde}), that $$\nu(\mathbb{R}_{+})=\int_{0}^{\infty}{\widetilde{S_{0}}(r)dr}=+\infty.$$
Since $\widetilde{S_{0}}$ is a positive, continuous and non-decreasing function on $\mathbb{R}_{+}$, we obtain from [\cite{brock2012weighted}, Proposition 3.1], that $(\mathbb{R}_{+}, \nu)$ has a lower isoperimetric function $J_{\nu}(v)$ given by $$J_{\nu}(v)=\widetilde{S_{0}}(r),$$ where $v=\nu([0, r))$.
Clearly, for small $R$, we have $J_{\nu}(v)\simeq 1$.
For large enough $R$, we obtain $$v=\int_{0}^{R}{\widetilde{S_{0}}(r)dr}\simeq  R^{\delta+1-\alpha}e^{R^{\alpha}}.$$
This implies that for large $v$, $$\log v\simeq R^{\alpha}+(\delta+1-\alpha)\log R\simeq R^{\alpha},$$ and thus, $$J_{\nu}(v)=\widetilde{S_{0}}(R)\simeq R^{\delta}e^{R^{\alpha}}=R^{\alpha-1}R^{\delta+1-\alpha}e^{R^{\alpha}}\simeq\frac{v}{(\log v)^{\frac{1-\alpha}{\alpha}}},$$ which proves that \begin{equation}\label{isoforweightedhalfex}J_{\nu}(v)=c_{0} \left\{ \begin{array}{lc} \frac{v}{(\log v)^{\frac{1-\alpha}{\alpha}}},& v\geq 2, \\1,& v<2,\end{array}\right.\end{equation} is a lower isoperimetric function of $(\mathbb{R}_{+}, \nu)$ if $c_{0}>0$ is a small enough constant. Note that $J_{\nu}$ is continuous and monotone increasing on $\mathbb{R}_{+}$ and, since $\alpha\in(0, 1]$, the function $\frac{J_{\nu}(v)}{v}$ is monotone decreasing.
Let $J_{\sigma}$ be the function defined by \begin{equation}\label{lowerisoforsphere2}J_{\sigma}(v)=c_{n}\left\{ \begin{array}{lcl} v^{\frac{n-2}{n-1}},&\textnormal{if}& 0\leq v\leq \frac{1}{2}, \\(1-v)^{\frac{n-2}{n-1}},&\textnormal{if}& \frac{1}{2}<v\leq 1 ,\end{array}\right.\end{equation} and recall that $J_{\sigma}$ is a lower isoperimetric function for $(\mathbb{S}^{n-1}, \sigma)$ assuming that the constant $c_{n}>0$ is sufficiently small.
Since we assume that $\psi_{0}$ satisfies the condition in (\ref{smallpsiass}), we can apply Corollary \ref{thmforapp} and deduce that a lower isoperimetric function $J$ of $(M_{0}, \widetilde{\mu_{0}})$ is given by \begin{equation}\label{isoperimodwithbond}J(w)=c\min\left(\frac{1}{6}J_{0}(w), \frac{1}{8}J_{\nu}\left(w\right)\right),\end{equation} where $J_{0}$ is defined by $$J_{0}(w)=\inf_{\overset{uv=w}{u>0,~0<v\leq\frac{1}{2}}}\left(J_{\nu}(u)v+J_{\sigma}(v)u\right)$$ and the constant $c>0$ is defined as in Theorem \ref{thm1iso}. 

In order to estimate $J$ in this case, let us consider the function $K$, defined for all $w>0$, by \begin{equation}\label{defvonkinapp}K(w)=\frac{J(w)}{w}=\min\left(\frac{1}{6}K_{0}(w), \frac{1}{8}K_{\nu}\left(w\right)\right),\end{equation} where $K_{0}$ is given by \begin{equation}\label{Knullforsmall}K_{0}(w)=\inf_{\overset{uv=w}{u>0,~0<v\leq\frac{1}{2}}}(K_{1}(u)+K_{\sigma}(v)),\end{equation} where $K_{\nu}(u)=\frac{J_{\nu}(u)}{u}$ and $K_{\sigma}(v)=\frac{J_{\sigma}(v)}{v}$. Observe that, since $K_{\sigma}$ is monotone decreasing, $$K_{0}(w)\geq \inf_{0<v\leq \frac{1}{2}}K_{\sigma}(v)\geq K_{\sigma}\left(\frac{1}{2}\right).$$ Note that if $w\geq 2$ and $v\leq\frac{1}{2}$, then $u=\frac{w}{v}\geq 4$. Hence, we obtain that for $w\geq 2$, $$K_{0}(w)\simeq \text{const}.$$ Substituting this into (\ref{defvonkinapp}), we get, using that $K_{\nu}$ is monotone decreasing, $K(w)\simeq K_{\nu}(w)$ for $w\geq 2$, and whence \begin{equation}\label{isomixedlarge}J(w)\simeq J_{\nu}(w)\simeq \frac{w}{(\log w)^{\frac{1-\alpha}{\alpha}}},\quad w\geq 2.\end{equation}
Note that if $w\leq 2$, the infimum is attained when $u\leq 2$ and the summands in (\ref{Knullforsmall}) are comparable. 
Observe that this holds true when $$v\simeq w^{\frac{1}{2-\frac{n-2}{n-1}}},$$ so that substituting this into (\ref{Knullforsmall}), we deduce for $w\leq 2$, $$K_{0}(w)\simeq w^{-\frac{1}{n}}.$$ Hence, we
obtain that for all $w\leq 2$, $$J_{0}(w)\simeq w^{\frac{n-1}{n}},$$ and therefore by (\ref{isoperimodwithbond}), $$J(w)\simeq w^{\frac{n-1}{n}}, \quad w\leq 2.$$ Combining this with (\ref{isomixedlarge}), we conclude that the function $J(w)$ defined by (\ref{lowerisoonM0}) is a lower isoperimetric function for the weighted model $(M_{0}, \widetilde{\mu_{0}})$.
\end{proof}

\section{On-diagonal heat kernel upper bounds}\label{On-diagonal heat kernel upper bounds}

Let $(M, \mu)$ be a weighted manifold. For any open set $\Omega\subset M$, define \begin{equation}\label{selfadjbottspec}\lambda_{1}(\Omega)=\inf_{u}\frac{\int_{\Omega}{|\nabla u|^{2}d\mu}}{\int_{\Omega}{u^{2}d\mu}},\end{equation} where the infimum is taken over all nonzero Lipschitz functions $u$ compactly supported in $\Omega$.

\begin{defin}\normalfont
We say that $(M, \mu)$ satisfies a \textit{Faber-Krahn inequality} with a function $\Lambda:(0, +\infty)\to (0, +\infty)$ if, for any non-empty precompact open set $\Omega\subset M$, \begin{equation}\label{deffaberkrahn}\lambda_{1}(\Omega)\geq \Lambda(\mu(\Omega)).\end{equation}
\end{defin}

It is well-known that a Faber-Krahn inequality (\ref{deffaberkrahn}) implies certain heat kernel upper bounds of the heat kernel (see \cite{carron1996inegalites} and \cite{grigor2006heat}).

\begin{prop}[\cite{grigor2006heat}, Theorem 5.1]\label{thmfaberheatupper}
Suppose that a weighted manifold $(M, \mu)$ satisfies a Faber-Krahn inequality (\ref{deffaberkrahn}) with $\Lambda$ being a continuous and decreasing function such that \begin{equation}\label{condforfaberkr}\int_{0}^{1}{\frac{dv}{v\Lambda(v)}}<\infty.\end{equation} Then for all $t>0$, \begin{equation}\label{upperbdheatfaber}\sup_{x\in M}p_{t}(x, x)\leq \frac{4}{\gamma(t/2)},\end{equation} where the function $\gamma$ is defined by \begin{equation}\label{upperbdheatfabert}t=\int_{0}^{\gamma(t)}{\frac{dv}{v\Lambda(v)}}.\end{equation}
\end{prop}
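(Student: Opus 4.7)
The plan is to use a Nash-type differential inequality, the standard route from a Faber-Krahn inequality to on-diagonal heat-kernel bounds. Take $f\in L^1\cap L^2$ with $f\ge 0$ and $\|f\|_1\le 1$, set $u(t,\cdot)=P_t f$ and $I(t)=\|u(t,\cdot)\|_2^2$. The plain energy identity gives $I'(t)=-2\mathcal{E}(u,u)$, and the $L^1$-contractivity of $P_t$ preserves $\|u(t,\cdot)\|_1\le 1$. The core of the argument is to convert the Faber-Krahn hypothesis into a lower bound for $\mathcal{E}(u,u)$ purely in terms of $I(t)$.

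To that end, I would prove the functional inequality
\[
\mathcal{E}(u,u)\;\ge\;\tfrac{1}{2}\,I\,\Lambda\!\left(4/I\right),
\qquad I=\|u\|_2^2,
\]
by a level-set truncation: fix $s>0$ and put $u_s=(u-s)_+$. Then $\operatorname{supp} u_s\subset\{u>s\}$ and Markov's inequality gives $\mu(\{u>s\})\le\|u\|_1/s\le 1/s$, so by Faber-Krahn and monotonicity of $\Lambda$,
\[
\int|\nabla u|^2\,d\mu=\int|\nabla u_s|^2\,d\mu\ge\Lambda(\mu\{u>s\})\,\|u_s\|_2^2\ge\Lambda(1/s)\,\|u_s\|_2^2.
\]
The pointwise identity $(u-s)_+^2\ge u^2-2su$ (valid everywhere, by splitting $u\le s$ versus $u>s$) gives $\|u_s\|_2^2\ge I-2s$. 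Choosing $s=I/4$ yields $\|u_s\|_2^2\ge I/2$ and $1/s=4/I$, from which the functional inequality follows.

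Plugging this into $I'=-2\mathcal{E}(u,u)$ produces the ODE $I'\le -I\,\Lambda(4/I)$. With the substitution $v(t):=4/I(t)$ this becomes $v'\ge v\,\Lambda(v)$, which after separation of variables and integration gives
\[
\int_{4/I(0)}^{4/I(t)}\frac{dw}{w\,\Lambda(w)}\;\ge\;t.
\]
The Dini-type hypothesis (\ref{condforfaberkr}) is exactly what allows me to let $I(0)\to\infty$ (by approximating the initial data by concentrating functions), collapsing the lower limit of integration to $0$ and producing $\int_0^{4/I(t)}\frac{dw}{w\Lambda(w)}\ge t=\int_0^{\gamma(t)}\frac{dw}{w\Lambda(w)}$. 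Because the integrand is positive and $\gamma$ is strictly increasing, this forces $I(t)\le 4/\gamma(t)$ uniformly over all admissible initial data $f$ with $\|f\|_1\le 1$.

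To finish, I would translate this operator norm bound into a pointwise heat-kernel bound. The uniform estimate reads $\|P_t\|_{L^1\to L^2}^2\le 4/\gamma(t)$, and by symmetry of $p_t$ together with the semigroup identity $p_{2t}(x,x)=\int p_t(x,y)^2\,d\mu(y)$ and $p_{2t}(x,y)^2\le p_{2t}(x,x)p_{2t}(y,y)$, we have $\|P_t\|_{L^1\to L^2}^2=\sup_x p_{2t}(x,x)$. Replacing $t$ by $t/2$ yields the desired $\sup_x p_t(x,x)\le 4/\gamma(t/2)$. The main obstacle I anticipate is not any single calculation but the careful justification of the limit $I(0)\to\infty$ in the ODE — this is precisely where hypothesis (\ref{condforfaberkr}) is indispensable, and it is also what rules out the trivial ``no decay'' conclusion one would otherwise obtain.
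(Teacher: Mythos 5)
The paper does not prove this proposition; it is cited verbatim as Theorem 5.1 of \cite{grigor2006heat}, so there is no in-paper argument to compare against. Your proof is the standard Nash--Faber--Krahn route and matches the argument in the cited source, and it is essentially correct, but two points are worth tidying. First, the displayed equality $\int|\nabla u|^2\,d\mu=\int|\nabla u_s|^2\,d\mu$ should be the inequality $\int|\nabla u|^2\,d\mu\ge\int|\nabla u_s|^2\,d\mu$, since $\nabla u_s=\nabla u\cdot\mathbf{1}_{\{u>s\}}$ almost everywhere; the direction of the inequality is what your argument uses, so nothing downstream changes. Second, the step ``let $I(0)\to\infty$'' is not needed and slightly misrepresents where hypothesis \eqref{condforfaberkr} enters: since the integrand $w\mapsto 1/(w\Lambda(w))$ is positive and $v(t)\ge v(0)$, one has directly
\begin{equation*}
\int_{0}^{4/I(t)}\frac{dw}{w\Lambda(w)}\;\ge\;\int_{4/I(0)}^{4/I(t)}\frac{dw}{w\Lambda(w)}\;\ge\;t,
\end{equation*}
with no limiting argument. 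The Dini-type condition \eqref{condforfaberkr}, together with the fact that $\Lambda$ decreasing forces $\int_1^{\infty}\frac{dw}{w\Lambda(w)}=\infty$, is what makes $\gamma$ in \eqref{upperbdheatfabert} well-defined and finite for every $t>0$; it is not needed to ``collapse'' the lower limit. Finally, you should note that the Faber--Krahn inequality \eqref{deffaberkrahn} is stated for precompact open sets, whereas $\{u>s\}$ need only have finite measure; one either starts with $f\in C_0^{\infty}$ and uses smoothness of $P_t f$ together with Sard's theorem to work with precompact approximations, or extends the Faber--Krahn inequality to open sets of finite measure by exhaustion. These are routine, but worth a sentence.
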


\begin{defin}\normalfont Let $\{M_{i}\}_{i=0}^{k}$ be a finite family of non-compact Riemannian manifolds. We say that a manifold $M$ is a \textit{connected sum} of the manifolds $M_{i}$ and write \begin{equation}\label{connectedsumdef}M=\bigsqcup_{i=0}^{k}{M_{i}}\end{equation} if, for some non-empty compact set $K\subset M$ the exterior $M\setminus K$ is a disjoint union of open sets $E_{0}, \ldots, E_{k}$ such that each $E_{i}$ is isometric to $M_{i}\setminus K_{i}$ for some compact set $K_{i}\subset M_{i}$.
\end{defin}
Conversely, we have the following definition.

\begin{defin}\normalfont Let $M$ be a non-compact manifold and $K\subset M$ be a compact set with smooth boundary such that $M\setminus K$ is a disjoint union of finitely many ends $E_{0}, \ldots, E_{k}$. Then $M$ is called a \textit{manifold with ends}.
\end{defin}

\begin{rem}\normalfont 
Let $M$ be a manifold with ends $E_{0}, \ldots, E_{k}$. Considering each end $E_{i}$ as an exterior of another manifold $M_{i}$, then $M$ can be written as in (\ref{connectedsumdef}).
\end{rem}

Let $(M=\bigsqcup_{i=0}^{k}{M_{i}}, \mu)$ be a connected sum of complete non-compact weighted manifolds $(M_{i}, \mu_{i})$ and $h$ be a positive smooth function on $M$. As before, let us consider the weighted manifold $(M, \widetilde{\mu})$, where $\widetilde{\mu}$ is defined by $d\widetilde{\mu}=h^{2}d\mu$.
By restricting $h$ to the end $E_{i}=M_{i}\setminus K_{i}$ and then extending this restriction smoothly to a function $h_{i}$ on $M_{i}$, we obtain weighted manifolds $(M_{i}, \widetilde{\mu}_{i})$, where $\widetilde{\mu}_{i}$ is given by $d\widetilde{\mu}_{i}=h_{i}^{2}d\mu$. 

\begin{thm}\label{heatonsumweightfaber}
Let $(M, \widetilde{\mu})=\left(\bigsqcup_{i=0}^{k}{M_{i}}, \widetilde{\mu}\right)$ be a weighted manifold with ends where $M_{0}$ is a model manifold with boundary so that for all $r\geq0$,  $$\psi_{0}(r)\leq C_{0}$$ and $$\widetilde{S_{0}}(r)\simeq \left\{ \begin{array}{ll} r^{\delta}e^{r^{\alpha}},& r\geq1, \\1,& r<1,\end{array}\right.$$ where $0<\alpha\leq 1$, $\delta \in \mathbb{R}$ and $\widetilde{S_{0}}$ denotes the area function of a weighted model $(M_{0}, \widetilde{\mu_{0}})$. 
Assume also that all $(M_{i}, \widetilde{\mu_{i}})$, $i=1, \ldots k$, have Faber-Krahn functions $\widetilde{\Lambda_{i}}$ such that $$\widetilde{\Lambda_{i}}(v)\geq c_{i}\left\{ \begin{array}{ll} \frac{1}{(\log v)^{\frac{2-2\alpha}{\alpha}}},& v\geq2, \\v^{-\frac{2}{n}},& v<2,\end{array}\right.$$ for constants $c_{i}>0$. Then there exist constants $C>0$ and $C_{1}>0$ depending on $\alpha$ and $n$ so that the heat kernel $\widetilde{p_{t}}$ of $(M, \widetilde{\mu})$ satisfies \begin{equation}\label{upperheatweightest}\sup_{x\in M}\widetilde{p}_{t}(x, x)\leq C\left\{ \begin{array}{ll} \exp\left(-C_{1}t^{\frac{\alpha}{2-\alpha}}\right),& t\geq1, \\t^{-\frac{n}{2}},& 0<t<1.\end{array}\right.\end{equation}
\end{thm}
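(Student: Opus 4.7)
The strategy is to establish a global Faber--Krahn inequality $\lambda_1(\Omega)\geq \widetilde{\Lambda}(\widetilde{\mu}(\Omega))$ on $(M,\widetilde{\mu})$ with $\widetilde{\Lambda}(v)\simeq (\log v)^{-(2-2\alpha)/\alpha}$ for $v\geq 2$ and $\widetilde{\Lambda}(v)\simeq v^{-2/n}$ for $v<2$, and then read off (\ref{upperheatweightest}) from Proposition \ref{thmfaberheatupper}. The plan has three steps: upgrade $M_0$ from isoperimetric to Faber--Krahn, glue the per-end Faber--Krahn inequalities into one on $M$, and carry out the Proposition \ref{thmfaberheatupper} integral.

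For the first step, the hypotheses on $\psi_0$ and $\widetilde{S_0}$ are exactly those of Theorem \ref{propisoformod}, so that result supplies a lower isoperimetric function $J_0$ on $(M_0,\widetilde{\mu_0})$ with $J_0(w)\simeq w/(\log w)^{(1-\alpha)/\alpha}$ for $w\geq 2$ and $J_0(w)\simeq w^{(n-1)/n}$ for $w<2$. Because $J_0(w)/w$ is monotone decreasing, the standard Maz'ya-type argument (apply the isoperimetric inequality to the super-level sets of $u^2$, invoke the co-area formula, then finish with Cauchy--Schwarz) converts $J_0$ into a Faber--Krahn function $\widetilde{\Lambda_0}(v)\simeq (J_0(v)/v)^2$, which has exactly the asymptotic shape imposed on the $\widetilde{\Lambda_i}$ for $i\geq 1$.

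For the second step, I would fix a smooth partition of unity $\{\chi_i^2\}_{i=0}^k$ subordinate to enlarged copies of the $M_i$ and exploit the identity $\sum_i |\nabla(u\chi_i)|^2 = |\nabla u|^2 + u^2\sum_i |\nabla\chi_i|^2$, valid because $\sum_i \chi_i^2 \equiv 1$. Applying Faber--Krahn on each $(M_i,\widetilde{\mu_i})$ to $u\chi_i$ and summing yields, for any precompact open $\Omega\supset \mathrm{supp}(u)$, the bound $\int |\nabla u|^2\,d\widetilde{\mu} \geq \min_i \widetilde{\Lambda_i}(\widetilde{\mu}(\Omega))\int u^2\,d\widetilde{\mu} - C\int_{U_0} u^2\,d\widetilde{\mu}$, where $U_0$ is a bounded neighborhood of the gluing set $K$ in which the $\chi_i$ transition. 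The technical heart of the merging is that $\min_i\widetilde{\Lambda_i}(v)\to 0$ as $v\to \infty$, so the additive error from $|\nabla \chi_i|^2$ cannot be absorbed for large $\Omega$; I would handle this by splitting $u=u_1+u_2$ with $u_2$ supported in a fixed compact enlargement of $K$ (where the local Euclidean Faber--Krahn inequality $\lambda_1 \gtrsim \widetilde{\mu}^{-2/n}$ applies directly) and $u_1$ supported away from the transition region (where the error vanishes), then combining the two estimates to produce a global $\widetilde{\Lambda}$ of the same asymptotic form. This is the step I expect to be the main obstacle, and the place where one may need to invoke the concavity of $J_0$ for large $v$ to merge at the level of isoperimetric functions on $M$ before a single Maz'ya step.

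For the third step, the integrability condition $\int_0^1 dv/(v\widetilde{\Lambda}(v))<\infty$ holds since $v\widetilde{\Lambda}(v)\simeq v^{1-2/n}$ near zero, so Proposition \ref{thmfaberheatupper} applies. For small $\gamma(t)$ the integral reduces to $\int_0^{\gamma(t)}v^{2/n-1}dv\simeq \gamma(t)^{2/n}$, giving $\gamma(t)\simeq t^{n/2}$ and hence the polynomial bound $t^{-n/2}$ for $0<t<1$. For large $\gamma(t)$, substituting $u=\log v$ produces $t\simeq (\log \gamma(t))^{(2-\alpha)/\alpha}$, so $\gamma(t)\simeq \exp(ct^{\alpha/(2-\alpha)})$, and the estimate $\sup_x\widetilde{p}_t(x,x)\leq 4/\gamma(t/2)$ yields the exponential decay in (\ref{upperheatweightest}) for $t\geq 1$.
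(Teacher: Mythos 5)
Your overall architecture matches the paper's: convert the isoperimetric inequality of Theorem \ref{propisoformod} into a Faber--Krahn inequality on $(M_0,\widetilde{\mu_0})$, merge the per-end Faber--Krahn inequalities into a global one on $(M,\widetilde{\mu})$, and read off the heat kernel bound from Proposition \ref{thmfaberheatupper}. Your step one (via the Maz'ya-type argument, equivalently [\cite{Grigoryan1999}, Proposition 7.1]) and step three (the $\gamma(t)$ computation) agree with the paper.

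The gap is in step two. You correctly diagnose that the na\"ive partition-of-unity argument produces an additive error $C\int_{U_0} u^2\,d\widetilde{\mu}$ which cannot be absorbed into $\min_i\widetilde{\Lambda}_i(\widetilde{\mu}(\Omega))\int u^2\,d\widetilde{\mu}$ once $\widetilde{\mu}(\Omega)$ is large, and you flag this as ``the main obstacle.'' But the proposed fix --- splitting $u=u_1+u_2$ with $u_2$ compactly supported near the gluing set --- is left as a sketch: the split reintroduces cross terms (or requires yet another cutoff with its own error), and you give no argument for how the resulting two estimates recombine into a single Faber--Krahn function of the required form. As presented, this step is not a proof; carrying it out in full is essentially a reproof of the gluing theorem for Faber--Krahn inequalities on connected sums. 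The paper avoids this entirely by invoking [\cite{grigor2016surgery}, Theorem 3.4], which yields directly $\widetilde{\Lambda}(v)=c\min_{0\leq i\leq k}\widetilde{\Lambda}_i(Qv)$ for some $c>0$, $Q>1$. You should either cite that result (which your hypotheses are precisely tailored to), or, if you wish to prove the gluing from scratch, work out the decomposition with full cutoff bookkeeping --- a nontrivial undertaking that would substantially lengthen the proof.
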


\begin{proof}
It follows from Theorem \ref{propisoformod}, that $(M_{0}, \widetilde{\mu_{0}})$ has the lower isoperimetric function $J$ given by (\ref{lowerisoonM0}), that is $$J(v)=\widetilde{c} \left\{ \begin{array}{lc} \frac{v}{(\log v)^{\frac{1-\alpha}{\alpha}}},& v\geq 2, \\c'v^{\frac{n-1}{n}},& v<2,\end{array}\right.$$ where $\widetilde{c}>0$ is a small enough constant and $c'$ is a positive constant chosen such that $J$ is continuous.
Since $J$ is continuous and the function $\frac{J(v)}{v}$ is non-increasing, we obtain from [\cite{Grigoryan1999}, Proposition 7.1], that $(M_{0}, \widetilde{\mu_{0}})$ admits a Faber-Krahn function $\widetilde{\Lambda}_{0}$ given by \begin{equation}\label{faberkrahnfromisoneu}\widetilde{\Lambda}_{0}(v)=\frac{1}{4}\left(\frac{J(v)}{v}\right)^{2}\simeq \left\{ \begin{array}{lc} \frac{1}{(\log v)^{\frac{2-2\alpha}{\alpha}}},& v\geq 2, \\v^{-\frac{2}{n}},& v<2.\end{array}\right.\end{equation}
We obtain from [\cite{grigor2016surgery}, Theorem 3.4] that there exist constants $c>0$ and $Q>1$ such that $(M, \widetilde{\mu})$ admits the Faber-Krahn function \begin{equation}\label{Fabrkrahnonsum}\widetilde{\Lambda}(v)=c\min_{0\leq i\leq k}\widetilde{\Lambda}_{i}(Qv).\end{equation} Hence $(M, \widetilde{\mu})$ has a Faber-Krahn function $\widetilde{\Lambda}$, satisfying \begin{equation}\label{faberkrahnfromisonew}\widetilde{\Lambda}(v)\simeq\left\{ \begin{array}{lc} \frac{1}{(\log v)^{\frac{2-2\alpha}{\alpha}}},& v\geq 2, \\v^{-\frac{2}{n}},& v<2.\end{array}\right.\end{equation} Observe that the Faber-Krahn function $\widetilde{\Lambda}$ satisfies condition (\ref{condforfaberkr}). Thus, we can apply Proposition \ref{thmfaberheatupper}, which yields the heat kernel upper bound in (\ref{upperbdheatfaber}). Hence, it remains to estimate the function $\gamma$ from the right hand side of (\ref{upperbdheatfaber}) by using (\ref{upperbdheatfabert}). In the case when $t>0$ is small enough, we get by (\ref{upperbdheatfabert}) and (\ref{faberkrahnfromisonew}), $$t=\int_{0}^{\gamma(t)}{\frac{dv}{v\widetilde{\Lambda}(v)}}=C'\int_{0}^{\gamma(t)}{\frac{dv}{v^{1-\frac{2}{n}}}}=C'\gamma(t)^{\frac{2}{n}} ,$$ which implies for some constant $C''>0$, $$\gamma(t)=C'' t^{\frac{n}{2}}.$$ For large enough $t$ on the other hand, we deduce $$t=\int_{0}^{\gamma(t)}{\frac{dv}{v\widetilde{\Lambda}(v)}}\simeq \int_{2}^{\log(\gamma(t))}{u^{\frac{2-2\alpha}{\alpha}}du}\simeq \log(\gamma(t))^{\frac{2-\alpha}{\alpha}}.$$ Therefore, $$\gamma(t)\simeq \exp\left(\textnormal{const}~t^{\frac{\alpha}{2-\alpha}}\right),$$ where $\textnormal{const}$ is a positive constant depending on $\alpha$ and $n$. Substituting these estimates for $\gamma(t)$ into (\ref{upperbdheatfaber}), we obtain the upper bound (\ref{upperheatweightest}) for the heat kernel $\widetilde{p}_{t}$ of $(M, \widetilde{\mu})$ for small and large values of $t$. For the intermediate values of $t$, we deduce the upper bound (\ref{upperheatweightest}) from the fact that the function $t\mapsto \sup_{x\in M}\widetilde{p}_{t}(x, x)$ is continuous.
\end{proof}

\begin{example}\normalfont
In Theorem \ref{heatonsumweightfaber} one can take $(M_{i}, \widetilde{\mu_{i}})=(\mathbb{H}^{n}, \mu_{i})$, $i=1, \ldots k$, where $\mu_{i}$ is the Riemannian measure on the hyperbolic space $\mathbb{H}^{n}$ since for all $0<\alpha\leq 1$, we have $$\Lambda_{\mathbb{H}^{n}}(v)\simeq\left\{ \begin{array}{lc} 1,& v\geq 2, \\v^{-\frac{2}{n}},& v<2\end{array}\right.\geq c\left\{ \begin{array}{ll} \frac{1}{(\log v)^{\frac{2-2\alpha}{\alpha}}},& v\geq2, \\v^{-\frac{2}{n}},& v<2.\end{array}\right.$$
\end{example}

\begin{rem}\normalfont
Let $(M, \widetilde{\mu})$ be the weighted manifold with ends, defined as in Theorem \ref{heatonsumweightfaber}, so that $\widetilde{S_{0}}(r)\simeq e^{r^{\alpha}}r^{\delta}$ for $r>1$ and hence, for $R>1$, $$\widetilde{V_{0}}(R)=\int_{0}^{R}{\widetilde{S_{0}}(r)dr}\simeq\int_{0}^{R}{e^{r^{\alpha}}r^{\delta}dr}\simeq e^{R^{\alpha}}R^{\delta+1-\alpha}.$$ Then, we obtain from [\cite{Coulhon1997}, Proposition 3.4] for large enough $R$, $$\widetilde{\lambda_{1}}(\Omega_{R})\leq 4\left(\frac{\widetilde{S_{0}}(R)}{\widetilde{V_{0}}(R)}\right)^{2}\leq\frac{C}{R^{2-2\alpha}},$$ where $\Omega_{R}=\{(r, \theta)\in M_{0}: 0<r<R\}$. Hence, setting $R(t)=t^{\frac{1}{2-\alpha}}$, [\cite{Coulhon1997}, Proposition 2.3] yields the following lower bound for the heat kernel $\widetilde{p_{t}}$ in $(M, \widetilde{\mu})$ for large enough $t$: 
\begin{align*}\sup_{x}\widetilde{p_{t}}(x, x)\geq\frac{1}{\widetilde{\mu}(\Omega_{R})}\exp\left(-\widetilde{\lambda_{1}}(\Omega_{R})t\right)\geq\frac{C_{1}}{e^{R^{\alpha}(t)}R^{1-\alpha}(t)}\exp\left(-\frac{Ct}{R^{2-2\alpha}(t)}\right)\geq \frac{C_{1}}{e^{C_{2}t^{\frac{\alpha}{2-\alpha}}}},\end{align*} which shows that the exponential decay in the upper bound given in (\ref{upperheatweightest}) is sharp. 
\end{rem}

\subsection{Weighted models with two ends}

Let $M$ be the topological space $M=\mathbb{R}\times \mathbb{S}^{n-1}$, $n\geq 2$, that is, any point $x\in M$ can be written in the polar form $x=(r, \theta)$ with $r\in\mathbb{R}$ and $\theta \in \mathbb{S}^{n-1}$.
For a fixed smooth positive function $\psi$ on $\mathbb{R}$ consider on $M$ the Riemannian metric $ds^{2}$ given by $$ds^{2}=dr^{2}+\psi^{2}(r)d\theta^{2},$$ where $d\theta^{2}$ is the standard Riemannian metric on $\mathbb{S}^{n-1}$. The Riemannian measure $\mu$ on $M$ with respect to this metric is given by \begin{equation}\label{muontwoendwithS}d\mu=\psi^{n-1}(r)drd\sigma(\theta),\end{equation} where $dr$ denotes the Lebesgue measure on $\mathbb{R}$ and $d\sigma$ the Riemannian measure on $\mathbb{S}^{n-1}$.
As before, we normalize the metric $d\theta^{2}$ on $\mathbb{S}^{n-1}$ so that $\sigma(\mathbb{S}^{n-1})=1$.
Then we define the area function $S$ on $\mathbb{R}$ by \begin{equation}\label{defSonttwo}S(r)=\psi^{n-1}(r).\end{equation}
Given a smooth positive function $h$ on $M$, that only depends on the polar radius $r\in \mathbb{R}$, and considering the measure $\widetilde{\mu}$ on $M$ defined by $d\widetilde{\mu}=h^{2}d\mu$, we get that the weighted model $(M, \widetilde{\mu})$, has the area function \begin{equation}\label{areaoftwoweight}\widetilde{S}(r)=h^{2}(r)S(r).\end{equation}


The Laplace-Beltrami operator $\Delta_{\mu}$ on $M$ can be represented in the polar coordinates $(r, \theta)$ as follows:
\begin{equation}\label{laplaceonmodel}\Delta_{\mu}=\frac{\partial^{2}}{\partial r^{2}}+\frac{S'(r)}{S(r)}\frac{\partial}{\partial r}+\frac{1}{\psi^{2}(r)}\Delta_{\theta},\end{equation} where $\Delta_{\theta}$ is the Laplace-Beltrami operator on $\mathbb{S}^{n-1}$. If we assume that $u$ is a radial function, that is, $u$ depends only on the polar radius $r$, we obtain from (\ref{laplaceonmodel}), that $u$ is harmonic in $M$ if and only if \begin{equation}\label{harmonicontwoendsu}u(r)=c_{1}+c_{2}\int_{r_{1}}^{r}{\frac{dt}{S(t)}},\end{equation}where $r_{1}\in [-\infty, +\infty]$ so that the integral converges and $c_{1}, c_{2}$ are arbitrary reals.

\begin{thm}\label{heatkernelforsmallendviah}
Let $(M, \mu)=(M_{0}\sqcup M_{1}, \mu)$ be a Riemannian model with two ends, where $M_{0}=\{(r, \theta)\in M: r\geq 0\}$ is a model manifold with boundary such that for all $r\geq0$, $$\psi_{0}(r)=e^{-\frac{1}{n-1}r^{\alpha}}.$$ Also assume that $(M_{1}, \mu_{1})$ is a Riemannian model with \begin{equation}\label{assumptiononM1on2end}\int_{1}^{\infty}{\frac{dt}{S_{1}(t)}}<\infty,\end{equation} and Faber-Krahn function $\Lambda_{1}$, so that \begin{equation}\label{faverkrahnohnoneside}\Lambda_{1}(v)\geq c_{1}\left\{ \begin{array}{ll} \frac{1}{(\log v)^{\frac{2-2\alpha}{\alpha}}},& v\geq2, \\v^{-\frac{2}{n}},& v<2,\end{array}\right.\end{equation} for some constant $c_{1}>0$. Then there exist positive constants $C_{x}=C_{x}(x, \alpha, n)$ and $C_{1}=C_{1}(\alpha, n)$ such that the heat kernel of $(M, \mu)$ satisfies, for all $x\in M$, the inequality \begin{equation}\label{heatkerneluppernew}p_{t}(x, x)\leq C_{x}\left\{ \begin{array}{ll} \exp\left(-C_{1}t^{\frac{\alpha}{2-\alpha}}\right),& t\geq1, \\t^{-\frac{n}{2}},& 0<t<1.\end{array}\right.
        \end{equation}
\end{thm}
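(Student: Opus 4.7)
The plan is to reduce the statement to Theorem~\ref{heatonsumweightfaber} by an $h$-transform. I construct a globally defined positive smooth harmonic function $h$ on $M$, pass to the weighted manifold $(M,\widetilde{\mu})$ with $d\widetilde{\mu}=h^{2}d\mu$, verify the hypotheses of Theorem~\ref{heatonsumweightfaber} for $(M,\widetilde{\mu})$, and convert the resulting upper bound on $\widetilde{p}_{t}$ back to $p_{t}$ via the identity $p_{t}(x,x)=h^{2}(x)\widetilde{p}_{t}(x,x)$ that follows from Lemma~\ref{relheattildeohne} and~(\ref{relationofheatkernelsends}), since $h$ will be harmonic on all of $M$.

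For the construction of $h$, I use~(\ref{harmonicontwoendsu}). Let $S(r)=\psi^{n-1}(r)$ on $\mathbb{R}$, so that $S$ equals $S_{0}$ on $\{r\geq 0\}$ and, after the obvious reparameterization, $S_{1}$ on $\{r\leq 0\}$. The hypothesis~(\ref{assumptiononM1on2end}) ensures $\int_{-\infty}^{0}dt/S(t)<\infty$, so $H(r):=\int_{-\infty}^{r}dt/S(t)$ is finite and smooth on $\mathbb{R}$. I then set $h(r)=1+cH(r)$ for a suitably small $c>0$, obtaining a positive smooth radial function satisfying $(Sh')'\equiv 0$; hence $h$ is harmonic on all of $M$ by~(\ref{harmonicontwoendsu}). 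By construction $h$ is bounded above and below by positive constants on $M_{1}$, while on $M_{0}$ one has $h(r)\simeq \int_{0}^{r}e^{t^{\alpha}}dt$ for large $r$.

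The key asymptotic is $\int_{0}^{r}e^{t^{\alpha}}dt\simeq r^{1-\alpha}e^{r^{\alpha}}$ as $r\to\infty$ (the case $\alpha=1$ being trivial, since the integral is then $\simeq e^{r}$). This follows from one integration by parts based on $e^{t^{\alpha}}=(\alpha t^{\alpha-1})^{-1}\tfrac{d}{dt}e^{t^{\alpha}}$. Consequently the weighted area function of $M_{0}$ satisfies
$$\widetilde{S}_{0}(r)=h^{2}(r)S_{0}(r)\simeq r^{2-2\alpha}e^{r^{\alpha}}\qquad\text{for }r\geq 1,$$
i.e.\ the required form $r^{\delta}e^{r^{\alpha}}$ with $\delta=2-2\alpha$. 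In addition $\psi_{0}(r)=e^{-r^{\alpha}/(n-1)}\leq 1$, so~(\ref{smallpsiass}) holds with $C_{0}=1$. On $M_{1}$, since $h$ is comparable to a positive constant, the measure $\widetilde{\mu}_{1}$ is comparable to $\mu_{1}$; the Dirichlet integrals in the definition of $\lambda_{1}$ are unchanged, so $(M_{1},\widetilde{\mu}_{1})$ inherits (up to a harmless constant in $v$) the Faber-Krahn function $\Lambda_{1}$ from~(\ref{faverkrahnohnoneside}), which is precisely the bound required in Theorem~\ref{heatonsumweightfaber}.

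Having verified all hypotheses, Theorem~\ref{heatonsumweightfaber} yields $\sup_{x\in M}\widetilde{p}_{t}(x,x)\leq C\exp(-C_{1}t^{\alpha/(2-\alpha)})$ for $t\geq 1$ and $\sup_{x\in M}\widetilde{p}_{t}(x,x)\leq Ct^{-n/2}$ for $0<t<1$. Multiplying by $h^{2}(x)$ yields~(\ref{heatkerneluppernew}) with $C_{x}=Ch^{2}(x)$. The only potentially delicate point is the asymptotic for $h(r)$ on $M_{0}$ and the resulting $\simeq$ expression for $\widetilde{S}_{0}$, which must fit the exact form demanded by Theorem~\ref{heatonsumweightfaber}; but this is a routine Laplace-type computation and presents no conceptual obstacle.
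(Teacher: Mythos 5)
Your proposal is correct and follows essentially the same route as the paper: construct a radial harmonic $h$ on all of $M$ via \eqref{harmonicontwoendsu} using the convergence hypothesis \eqref{assumptiononM1on2end}, pass to $(M,\widetilde{\mu})$, verify the hypotheses of Theorem~\ref{heatonsumweightfaber} (in particular $\widetilde{S}_{0}(r)\simeq r^{2-2\alpha}e^{r^{\alpha}}$ via the Laplace asymptotic $\int_{0}^{r}e^{t^{\alpha}}dt\simeq r^{1-\alpha}e^{r^{\alpha}}$), and convert back with $p_{t}(x,x)=h^{2}(x)\widetilde{p}_{t}(x,x)$. The only cosmetic difference is your normalization $h=1+cH$ with $H(r)=\int_{-\infty}^{r}dt/S(t)$ versus the paper's $\kappa_{1}+\kappa_{2}\int_{1}^{r}dt/S(t)$, which are affine variants of one another.
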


\begin{proof}
Observe that the assumption (\ref{assumptiononM1on2end}) yields that we can choose positive constants $\kappa_{1}$ and $\kappa_{2}$ so that the smooth function $h$ on $M$ defined by $$h(r)=\kappa_{1}+\kappa_{2}\int_{1}^{r}{\frac{dt}{S(t)}},$$ is positive in $M$ and satisfies $h\simeq 1$ in $\{r\leq 0\}$. Consider the weighted model with two ends $(M, \widetilde{\mu})$, where $\widetilde{\mu}$ is defined by $d\widetilde{\mu}=h^{2}d\mu$. It follows from (\ref{faverkrahnohnoneside}) that the weighted model $(M_{1}, \widetilde{\mu_{1}})$ has the Faber-Krahn function $\widetilde{\Lambda}_{1}$ satisfying $$\widetilde{\Lambda}_{1}(v)\geq \widetilde{c}_{1}\left\{ \begin{array}{ll} \frac{1}{(\log v)^{\frac{2-2\alpha}{\alpha}}},& v\geq2, \\v^{-\frac{2}{n}},& v<2,\end{array}\right.$$ for some constant $\widetilde{c}_{1}>0$. Further, note that $$h|_{M_{0}}(r)\simeq \left\{ \begin{array}{ll} r^{1-\alpha}e^{r^{\alpha}},& r\geq 1, \\1,& 0\leq r<1,\end{array}\right.$$ whence the area function $\widetilde{S_{0}}$ of the weighted model with boundary $(M_{0}, \widetilde{\mu_{0}})$ admits the estimate \begin{equation}\label{estimatestilononeend}\widetilde{S_{0}}(r)\simeq \left\{ \begin{array}{ll} r^{2-2\alpha}e^{r^{\alpha}},& r\geq 1, \\1,& 0\leq r<1.\end{array}\right.\end{equation}
Since also $\psi_{0}\leq 1$, we can apply Theorem \ref{heatonsumweightfaber} and obtain that there exist constants $C>0$ and $C_{1}>0$ depending on $\alpha$ and $n$ so that the heat kernel $\widetilde{p_{t}}$ of $(M, \widetilde{\mu})$ satisfies \begin{equation}\label{largeendheatinproof}\sup_{x\in M}\widetilde{p}_{t}(x, x)\leq C\left\{ \begin{array}{ll} \exp\left(-C_{1}t^{\frac{\alpha}{2-\alpha}}\right),& t\geq1, \\t^{-\frac{n}{2}},& 0<t<1.\end{array}\right.\end{equation}
Using that $h$ is harmonic in $M$, we have by (\ref{relationofheatkernelsends}), for all $t>0$ and $x\in M$, the identity $$\widetilde{p}_{t}(x, x)=\frac{p_{t}(x, x)}{h^{2}(x)},$$ which together with (\ref{largeendheatinproof}) implies the upper bound (\ref{heatkerneluppernew}) and thus, finishes the proof.
\end{proof}

\begin{rem}\normalfont
Consider the end $\Omega:=\{r>0\}$ of the Riemannian model $(M, \mu)$ from Theorem \ref{heatkernelforsmallendviah} and note that $\left(\overline{\Omega}=\{r\geq0\}, \mu|_{\{r\geq0\}}\right)$ is parabolic by [\cite{Grigorextquotesingleyan1999}, Proposition 3.1], whence the estimate (\ref{heatkerneluppernew}) implies that we cannot get a polynomial decay of the heat kernel in $M$ as it follows from (\ref{nonparaendpoldecglob}) in Theorem \ref{thmlowerptbd}, just by assuming the polynmial volume growth condition (\ref{PolygrowthOM}).
\end{rem}

\begin{rem}\normalfont
Consider again the end $\Omega:=\{r>0\}$ of the Riemannian model $(M, \mu)$ from Theorem \ref{heatkernelforsmallendviah} and assume for simplicity that $n=2$. Let $M_{0}$ be defined as in Theorem \ref{thmlocharsph}, that is, there exists a compact set $K_{0}\subset M_{0}$ that is the closure of a non-empty open set, such that $\Omega$ is isometric to $M_{0}\setminus K_{0}$. Let us check which conditions from Theorem \ref{thmlocharsph} are not satisfied in $M_{0}$. A simple computation shows that the area function $S_{0}$ of the manifold $M_{0}$ satisfies $S_{0}''(r)\sim\alpha^{2}e^{-r^{\alpha}}r^{2\alpha-2}~\text{as}~r\to +\infty$, so that $-\frac{S_{0}''(r)}{S(r)}\to 0~\text{as}~r\to +\infty$. Together with the fact that on a compact set, the curvature is non-negative, it then follows from (\ref{gaussin2S}) that the curvature on $M_{0}$ is bounded below, which implies that $M_{0}$ is a locally Harnack manifold. Obviously, $S_{0}$ also satisfies the conditions (\ref{langsamevariquo}) and (\ref{spherhartwodimmod}) from Proposition \ref{propforspherintwo}, whence we obtain that on $M_{0}$ the spherical Harnack inequality (\ref{annuliharnack2}) holds. On the other hand, condition (\ref{upperboundVOm}) in $M_{0}$ fails, since for fixed $\rho>0$, the volume $V(x, \rho)$ can be arbitrarily small when $r\to +\infty$ where $x=(r, \theta)\in \Omega$. Hence, we have that in general, we can not drop the condition (\ref{upperboundVOm}) in Theorem \ref{thmlocharsph} to get the polynomial decay (\ref{lowerbndendparamitharnaanuohneomloc}) of the heat kernel in $M$.
\end{rem}

\bibliographystyle{abbrv}

\bibliography{librarynext}

\end{document}